\documentclass{article}

\usepackage[margin=1in]{geometry}

\usepackage{amssymb}
\usepackage{amsmath}
\usepackage{amsthm}
\usepackage{enumitem}
\setlist{nolistsep}
\usepackage{bm}
\usepackage{tikz}
\usetikzlibrary{arrows.meta}
\usetikzlibrary{decorations} 
\usetikzlibrary{shapes}
\usepackage[margin=2cm]{caption}
\usepackage{subcaption}
\usepackage{titlesec}
\usepackage[backref]{hyperref}
\usepackage{graphicx}
\usepackage{xcolor}
\usepackage{thmtools}
\usepackage{thm-restate}
\usepackage{cleveref}
\usepackage{lmodern}
\usetikzlibrary{arrows.meta,backgrounds,calc}

\definecolor{setblue}{HTML}{2B6F9C}
\definecolor{setbluefill}{HTML}{EAF3F8}
\definecolor{setorange}{HTML}{B56A22}
\definecolor{setorangefill}{HTML}{FFF2E3}
\definecolor{pathblue}{HTML}{2458A6}
\definecolor{greedy}{HTML}{34424B}
\definecolor{softgray}{HTML}{66737C}
\definecolor{bridgepurple}{HTML}{76569A}
 \setlist{  
  listparindent=\parindent,
  parsep=0pt,
}

\newtheorem{theorem}{Theorem}[section]

\newtheorem{lemma}[theorem]{Lemma}
\newtheorem*{claim*}{Claim}
\newtheorem{claim}{Claim}

\newtheorem{cor}[theorem]{Corollary}
\newtheorem{conjecture}[theorem]{Conjecture}
\newtheorem{prop}[theorem]{Proposition}

\theoremstyle{definition}
\newtheorem{definition}[theorem]{Definition}
\newtheorem{question}[theorem]{Question}

\newenvironment{subproof}[1][Proof]{\begin{proof}[#1]}{\end{proof}}
\newcommand{\spn}{\mathrm{span}}
\newcommand{\eps}{\varepsilon}

\newcommand{\Vect}{\mathrm{Vec}}
\newcommand{\ex}{\mathrm{ex}}

\newcounter{propcounter}

\title{Generic solutions to symmetric linear equations}
\author{Bryce Frederickson\thanks{Department of Mathematics, Emory University, 
Atlanta, GA, USA. Email: {\tt bfrede4@emory.edu}}, Liana Yepremyan\thanks{Department of Mathematics, Emory University, 
Atlanta, GA, USA. Email: {\tt lyeprem@emory.edu}. Research for this project was partially supported by the National Science Foundation grant 2247013: Forbidden and Colored Subgraphs.} }

\date{}

\begin{document}

\maketitle

\begin{abstract}
    In 1993, Ruzsa showed that for every $k \geq 2$, there exists a constant $C$ such that every subset $A \subseteq [N]$ of size at least $C N^{1/k}$ contains $2k$ distinct elements $a_1, \ldots, a_k, b_1, \ldots, b_k \in A$ such that $a_1 + \cdots + a_k = b_1 + \cdots + b_k$. We strengthen this result by proving that the elements $a_1, \ldots, a_k, b_1, \ldots, b_k$ can be chosen to have the additional property that $\{a_1, \ldots, a_k, b_1, \ldots, b_k\}$ has $2^{2k}-1$ distinct subset sums, with the only coincidence being that $\{a_1, \ldots, a_k\}$ and $\{b_1, \ldots, b_k\}$ have the same sum.
    
    Our proof also applies to any finite Abelian group of odd order $N$, and it provides a corresponding supersaturation result: that whenever $|A| \geq CN^{1/k}$, there are at least $\Omega(|A|^{2k}/N)$ choices for $a_1, \ldots, a_k, b_1, \ldots, b_k \in A$ satisfying these properties. We prove a slightly weaker statement for Abelian groups of even order.

    We also apply our methods to the vector space setting and prove the following $\mathbb F_q$-analogue of the Bondy--Simonovits Theorem on the extremal number of even cycles in graphs: Any rank-$n$, simple, $\mathbb F_q$-representable matroid with no circuit of size exactly $2k$ has size at most $C q^{n/k}$ for some constant $C$ depending only on $q$ and $k$. When $q=2$, this is best possible up to the constant $C$ for all $k \geq 2$. Our methods also apply to the original graph setting and give a new proof of the Bondy--Simonovits Theorem and its supersaturation version.

\end{abstract}

\section{Introduction}
\subsection{Symmetric equations}\label{sec:symmetric equations}

This article primarily concerns \emph{symmetric} linear equations in $2k$ variables $x_1, \ldots, x_{2k}$, which take the form
\[\label{eq:symmetric equation}
\lambda_1 x_1 + \cdots + \lambda_k x_k = \lambda_1 x_{k+1} + \cdots + \lambda_k x_{2k}, \tag{\textbf{SLE}}\]
where the coefficients $\lambda_1, \ldots, \lambda_k$ may come from the integers or from a (finite) field. Our central objective is to explore the minimum required density of a subset $A$ in $[N]$, in a finite Abelian group $\Gamma$, or in a finite vector space $\mathbb F_q^n$, in order to guarantee ``nondegenerate'' solutions to~\eqref{eq:symmetric equation} with $x_i \in A$ for all $i \in [2k]$, where we consider various notions of ``nondegeneracy''.

The most fundamental symmetric linear equation is the \emph{symmetric additive equation}, in which $\lambda_1 = \cdots = \lambda_k = 1$, namely
\[\label{eq:equal k-sums} x_1 + \cdots + x_k = x_{k+1} + \cdots + x_{2k}.\tag{\textbf{SAE}}\]
Clearly, any nonempty subset $A$ of any Abelian group $\Gamma$ has solutions $(x_1, \ldots, x_{2k}) \in A^{2k}$ to~\eqref{eq:equal k-sums} by taking $(x_{k+1}, \ldots, x_{2k})$ to be any permutation of $(x_1, \ldots, x_k)$. Such solutions are called \emph{trivial}, and a set $A$ which contains only trivial solutions to~\eqref{eq:equal k-sums} is called a \emph{$B_k$}-set. The case $k=2$ is the most widely studied, and $B_2$-sets are usually called \emph{Sidon sets}. For a subset $A \subseteq \Gamma$, we define the \emph{$k$-fold sumset} of $A$ to be the set
\[A^{+k} := \{a_1 + \cdots + a_k : a_1, \ldots, a_k \in A\} \subseteq \Gamma.\]
In a~$B_k$-set~$A$, any $k$-element multiset in $A$ is determined by its sum, so we necessarily have $\binom{|A|+k-1}{k} \leq |A^{+k}|$.
In particular, if $A$ is a subset of $[N]$ or of a finite Abelian group of order $N$, then $|A| = O(N^{1/k})$.\footnote{Here and throughout, asymptotics are with respect to $N$ if we are considering subsets of $[N] \subseteq \mathbb Z$ or subsets of a finite Abelian group (or vector space) of order $N$. The parameter $k$ is always considered to be a constant.} A foundational construction of Bose and Chowla~\cite{bose-chowla1960} from~$1960$ shows that this simple bound is optimal in the integer setting, up to the leading constant.
\begin{theorem}[Bose, Chowla~\cite{bose-chowla1960}]\label{thm:bose chowla}
    For every prime power $q$ and integer $k \geq 2$, there exists a $B_k$-set in~$\mathbb Z_{q^{k}-1}$ of size $q$. Consequently, for any positive integer $N$, there exists a $B_k$-set in $[N]$ of size $(1-o(1))N^{1/k}$.
\end{theorem}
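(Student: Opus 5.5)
The plan is to follow the classical Bose--Chowla construction through the multiplicative group of $\mathbb F_{q^k}$. Let $\theta$ be a generator of the cyclic group $\mathbb F_{q^k}^\times$, which has order $q^k-1$. Since $\theta$ generates the multiplicative group, $\mathbb F_{q^k}=\mathbb F_q(\theta)$, so $\theta$ has degree exactly $k$ over $\mathbb F_q$. For each $a\in\mathbb F_q$ the element $\theta+a$ is nonzero (it is not even in $\mathbb F_q$), so we may define $e_a\in\mathbb Z_{q^k-1}$ by $\theta^{e_a}=\theta+a$. The candidate set is $B=\{e_a : a\in\mathbb F_q\}$, and it has exactly $q$ elements, since $\theta+a=\theta+a'$ forces $a=a'$.

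To verify the $B_k$ property, suppose $e_{a_1}+\cdots+e_{a_k}\equiv e_{b_1}+\cdots+e_{b_k}\pmod{q^k-1}$. Exponentiating $\theta$ gives
\[
\prod_{i=1}^k(\theta+a_i)=\prod_{i=1}^k(\theta+b_i)
\]
in $\mathbb F_{q^k}$. Both sides are values at $\theta$ of monic polynomials of degree $k$ over $\mathbb F_q$. Their difference is a polynomial of degree less than $k$ vanishing at $\theta$. Because the minimal polynomial of $\theta$ has degree $k$, this difference must be identically zero, so $\prod(x+a_i)=\prod(x+b_i)$ in $\mathbb F_q[x]$. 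Unique factorization in $\mathbb F_q[x]$ then shows that the multisets $\{a_i\}$ and $\{b_i\}$ coincide, hence so do $\{e_{a_i}\}$ and $\{e_{b_i}\}$. Every solution is therefore trivial. This degree argument is the only real content of the first claim, and it is the step that actually uses $k=\deg\theta$.

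For the consequence in $[N]$, first lift $B$ to integer representatives in $\{0,\dots,q^k-2\}$. A solution in the integers gives the same congruence mod $q^k-1$, so integer solutions are also trivial. Shifting by $1$ puts the set in $[q^k-1]$, and it remains a $B_k$-set because translation preserves equal $k$-sums. Given $N$, choose the largest prime $p$ with $p^k-1\le N$, so $p\le N^{1/k}$. By the prime number theorem (Bertrand's postulate is not enough), there is a prime in $[(1-\eps)x,x]$ for large $x$; taking $x=N^{1/k}$ gives $p\ge(1-o(1))N^{1/k}$. This yields a $B_k$-set in $[p^k-1]\subseteq[N]$ of size $p=(1-o(1))N^{1/k}$.

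The main obstacle is this prime-gap step. Prime powers alone would not suffice without density, and the standard fact $p_{n+1}/p_n\to1$ handles it directly.
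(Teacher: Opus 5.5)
Your proof is correct. It is the original Bose--Chowla argument. A primitive element $\theta$ of $\mathbb F_{q^k}$ has degree exactly $k$, so a relation $\prod(\theta+a_i)=\prod(\theta+b_i)$ between monic degree-$k$ expressions forces $\prod(x+a_i)=\prod(x+b_i)$ in $\mathbb F_q[x]$. Unique factorization then gives equality of multisets. The passage to $[N]$ (lift residues, shift by $1$, and use the prime number theorem to get a prime in $[(1-\varepsilon)N^{1/k},N^{1/k}]$) is also standard and correctly done.

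The paper only cites this theorem and gives no proof, so there is no in-paper argument to compare against. The closest one is the proof of \Cref{thm:Bk construction vector space}, which uses the additive moment curve $\{(\alpha,\alpha^2,\dots,\alpha^k)\}$. That proof recovers the multiset from power sums via Newton's identities, which is why it needs characteristic $>k$. Your multiplicative encoding reads off the elementary symmetric functions directly, as the coordinates of $\prod(\theta+a_i)$ in the basis $1,\theta,\dots,\theta^{k-1}$. That is why it works for every prime power $q$, including characteristic $\le k$.

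One cosmetic slip: $p^k-1\le N$ gives only $p\le (N+1)^{1/k}$, not $p\le N^{1/k}$. You never use this upper bound, so nothing is affected.
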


We observe a similar lower bound for vector spaces over finite fields with characteristic greater than $k$. The construction is basic, but we have not been able to find prior mention of it except in the Sidon set case $k=2$ (see Example~$1$ in~\cite{cilleruelo2012combinatorial}\footnote{In~\cite{cilleruelo2012combinatorial}, the assumption that $\mathbb F_q$ has odd characteristic is not stated, but it is necessary.}).
\begin{prop}\label{thm:Bk construction vector space}
Let $k \geq 2$ be a fixed integer. For every prime $p > k$ and integer $r \geq 0$, there exists a $B_k$-set in $\mathbb F_p^{rk}$ of size $p^r$. Consequently, over every finite field $\mathbb F_q$ of characteristic greater than $k$, for every positive integer $n$, there exists a $B_k$-set in $\mathbb F_q^n$ of size $\Omega(N^{1/k})$, where $N := q^n$.
\end{prop}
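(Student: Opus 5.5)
Identify $\mathbb F_p^{rk}$ with $\mathbb F_{p^r}^k$ (as $\mathbb F_p$-vector spaces), write $F := \mathbb F_{p^r}$, and take the moment curve
\[A := \{(x, x^2, \ldots, x^k) : x \in F\} \subseteq F^k,\]
which has exactly $p^r$ elements. We show that $A$ is a $B_k$-set.

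Suppose $x_1, \ldots, x_k, y_1, \ldots, y_k \in F$ satisfy $\sum_i (x_i, \ldots, x_i^k) = \sum_i (y_i, \ldots, y_i^k)$. Then the power sums agree: $P_j(x) = P_j(y)$ for $j = 1, \ldots, k$. We then use Newton's identities,
\[
j e_j = \sum_{i=1}^{j} (-1)^{i-1} e_{j-i} P_i \qquad (1 \le j \le k),
\]
which hold over any commutative ring. Since $p > k$, each $j \le k$ is invertible in $F$. Inductively, the elementary symmetric polynomials $e_1, \ldots, e_k$ of $(x_1, \ldots, x_k)$ are therefore determined by $P_1, \ldots, P_k$. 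Hence $e_j(x) = e_j(y)$ for all $j \le k$, so $\prod_i (t - x_i) = \prod_i (t - y_i)$ in $F[t]$. Since $F[t]$ is a UFD, the multisets $\{x_i\}$ and $\{y_i\}$ coincide, so the solution is trivial. The hypothesis $p > k$ is exactly what this step needs, and it is the only delicate point: in characteristic $\le k$ the division by $j$ fails. This also explains the footnote about odd characteristic when $k = 2$.

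For the "consequently" part, let $q = p^s$ with $p > k$, and let $n \geq 1$. We have $\mathbb F_q^n \cong \mathbb F_p^{sn}$ as additive groups, and the $B_k$ property depends only on the additive group. Put $r := \lfloor sn/k \rfloor$. Embed $\mathbb F_p^{rk}$ as a subgroup of $\mathbb F_p^{sn}$ (pad with zero coordinates); the image of a $B_k$-set under an injective homomorphism is still a $B_k$-set. This gives a $B_k$-set of size
\[
p^r \ge p^{sn/k - 1} = N^{1/k}/p.
\]
Because $p \le q$ is fixed (it depends only on $q$), this is $\Omega(N^{1/k})$.
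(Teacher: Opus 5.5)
Your proof is correct and follows the paper's argument essentially verbatim. Both use the moment curve $\{(\alpha,\alpha^2,\ldots,\alpha^k)\}$ over $\mathbb F_{p^r}$ and Newton's identities (using $p>k$) to recover the multiset, then take $r$ maximal with $p^{rk}\le q^n$ (your $r=\lfloor sn/k\rfloor$) to get $p^r > N^{1/k}/p$; the one small addition is that the case $r=0$, where $\mathbb F_{p^r}$ is not a field, should be handled separately as trivial (a single point), as the paper does.
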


A slight weakening of the defining property of a $B_k$-set is to allow for nontrivial solutions to~\eqref{eq:equal k-sums}, provided that at least two variables $x_i$ and $x_j$ are assigned the same value, as follows.

\begin{definition}
    Let $\Gamma$ be an Abelian group. A subset $A \subseteq \Gamma$ is called a \emph{$B_k^*$-set} if there are no solutions $(x_1, \ldots, x_{2k}) \in A^{2k}$ to the equation~\eqref{eq:equal k-sums}
    in which $x_1, \ldots, x_{2k}$ are distinct elements of $A$.
\end{definition}

In 1993, Ruzsa~\cite{ruzsa1993solving} proved that the same asymptotic bound of $O(N^{1/k})$ which holds for $B_k$-sets in $[N]$ also holds for $B_k^*$-sets.
\begin{theorem}[Ruzsa~\cite{ruzsa1993solving}]\label{thm:ruzsa}
If $A \subseteq [N]$ is a $B_k^*$-set, then $|A| \leq (1 + o(1))k^{2-1/k}N^{1/k}$.
\end{theorem}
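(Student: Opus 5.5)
Ruzsa's argument is a counting argument on the $k$-fold sums of $A$, combined with a greedy reduction to a large subset that behaves like a $B_k$-set. For $s \in [kN]$, let $r(s)$ be the number of $k$-element subsets $\{a_1,\dots,a_k\}\subseteq A$ with $a_1+\cdots+a_k=s$.

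\textbf{Step 1: two equal-sum $k$-subsets are almost disjoint.} Suppose two distinct $k$-subsets $S,T$ have equal sums. Remove their common elements $S\cap T$. This leaves two disjoint subsets $S'=S\setminus T$ and $T'=T\setminus S$ of the same size $j<k$ with equal sums. These are distinct elements, so they give a solution to the $j$-variable version of \eqref{eq:equal k-sums}.

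\textbf{Step 2: reduce to the case $S\cap T=\emptyset$.} The difficulty is that $A$ is assumed to be only $B_k^*$, not $B_j^*$ for $j<k$. We handle this by padding. Given a coincidence of size $j$ (disjoint $S'$, $T'$ with equal sums), choose any $k-j$ further elements of $A$ outside $S'\cup T'$. Add them to $S'$ and add a \emph{different} $k-j$ elements to $T'$? That changes the sums unequally, so it fails.

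Instead, Ruzsa's trick is the following. If $A$ is $B_k^*$, then for each fixed $(k-1)$-subset $U$ and each element $x\notin U$, the sums $\sigma(U)+x$ have only controlled multiplicity. Concretely:
\begin{itemize}
\item Count pairs $(S,T)$ of $k$-subsets with equal sum.
\item The pairs with $S\cap T\neq\emptyset$ are bounded by $O(|A|^{2k-1})$ via a crude induction on $k$.
\item The pairs with $S\cap T=\emptyset$ are zero, since these would be exactly the forbidden distinct solutions.
\end{itemize}

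\textbf{Step 3: Cauchy--Schwarz.} This gives
\[
\binom{|A|}{k}^2\le |kA|\sum_s r(s)^2 \le kN\left(\binom{|A|}{k}+O(|A|^{2k-1})\right).
\]
That bound is too weak as it stands. The remedy is to restrict to sums in a shortened interval: since most $k$-sums concentrate in a window of length about $\sqrt{k}\,N$, one can sharpen the factor $kN$. Handling the $S\cap T\neq\emptyset$ term properly is also the main obstacle. The count of such pairs is $\sum_{U}\#\{(S',T')\}$, which again requires a $B_{k-1}$-type bound.

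\textbf{Main step (what I would actually do).} Ruzsa passes to a subset $A'$ on which \emph{all} smaller $j$ have few solutions, by greedily deleting elements involved in many small coincidences. This costs only a factor of $O(1)$ in size. The leading constant $k^{2-1/k}$ then comes from the deletion bookkeeping together with the Cauchy--Schwarz step above, applied in the window $[1,kN]$.

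I expect the hardest part to be controlling the overlapping pairs. I would try to bound their number by $|A|^{2k-2}\cdot O(N^{1/k})$ via induction on $k$. The base case $k=2$ is the classical Sidon set bound: for a $B_2^*$-set, the only coincidences are trivial or of the form $a+a=b+c$, and these contribute $O(|A|^2)$.
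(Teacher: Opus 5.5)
There is a genuine gap. The one step that carries all the content is asserted rather than proved, and the quantitative targets you set would not suffice even if you reached them.

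In your framework, $\binom{|A|}{k}^2 \le kN\sum_s r(s)^2$ gives $|A| = O(N^{1/k})$ only if $\sum_s r(s)^2 = O(|A|^k)$. In other words, the number of ordered pairs of distinct equal-sum $k$-sets (all of which overlap, by the $B_k^*$ property) must be of the same order as the diagonal term. None of your bounds come close:
\begin{itemize}
\item Your bound $O(|A|^{2k-1})$ yields only $|A| = O(N)$.
\item The trivial count already does better: fix $S$, a common element, and $k-2$ further elements of $T$; the last element is determined by the sum. This gives at most $k|A|^{2k-2}$ pairs, which yields only $|A| = O(N^{1/2})$.
\item Your proposed inductive target $|A|^{2k-2}\cdot O(N^{1/k})$ is weaker than that trivial bound. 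It gives $|A| = O(N^{(k+1)/(2k)})$.
\end{itemize}

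The overlapping pairs with $|S\cap T| = k-j$ are governed by disjoint equal-sum $j$-sets with $2 \le j \le k-1$. There are none when $k=2$, but the difficulty is real from $k=3$ on. For $k=3$, the pairs with $|S\cap T|=1$ number about $|A|$ times the count of nontrivial additive quadruples in $A$, so you would need that count to be $O(|A|^2)$. As your own failed padding attempt shows, the $B_k^*$ hypothesis does not control such lower-order coincidences directly.

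Your ``main step'' is exactly where this would have to be resolved: greedily deleting elements that lie in many small coincidences, at a cost of an $O(1)$ factor. You give no reason why such elements are few, nor even what ``many'' should mean. Moreover, an $O(1)$-factor loss is incompatible with a $(1+o(1))$ leading constant, and $k^{2-1/k}$ is never actually derived. The $\sqrt{k}\,N$-window remark affects only constants and cannot repair the exponent.

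For comparison, the paper does not prove this statement at all; it is quoted from Ruzsa. The paper's own methods recover only the order of magnitude, via \Cref{thm:additively generic integers}: $|A| \le CN^{1/k}$ for $B_k^*$-sets (indeed for $B_k^{***}$-sets), with a non-explicit $C$. That argument never bounds an energy. It runs as follows.
\begin{itemize}
\item It takes the least $\ell \le k$ for which some set of size at most $\eps_\ell|A|^\ell$ carries a constant fraction of the $\ell$-fold signed sums.
\item Minimality of $\ell$ shows that most of this weight sits on robust elements. Lower-order coincidences are thus handled locally rather than counted globally.
\item The solution is then built greedily along a tulip.
\end{itemize}
So if you only want $O(N^{1/k})$, the minimal-$\ell$ robustness argument is one way around the obstacle you identified. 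If you want Ruzsa's constant, you still need a genuine mechanism controlling the overlapping pairs.
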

As we will discuss further in the concluding remarks, there is a well-known connection between $B_k^*$-sets in Abelian groups and the so-called rainbow Tur\'an problem for cycles of length $2k$, and a result of Janzer~\cite{janzer2023rainbow} in this latter setting implies that the conclusion of \Cref{thm:ruzsa} holds for any finite Abelian group. That is, the maximum size of a $B_k^*$-set in an Abelian group of order $N$ is $O(N^{1/k})$.

We will consider further weakenings of $B_k^*$-sets which forbid only those solutions to~\eqref{eq:equal k-sums} which demonstrate even stricter notions of nondegeneracy. The first notion we call \emph{inseparability}, which means that the solution to a system of equations, in addition to assigning variables to distinct elements, also fails to satisfy all proper truncations of the original system.

\begin{definition}
    Let $\Gamma$ be an Abelian group or a vector space (or more generally, an $R$-module for some commutative ring $R$). We say that a solution $(x_1, \ldots, x_k) \in \Gamma^k$ to a system
    \[\left \{ \begin{array}{c c c}\lambda_{1,1} x_1 + \cdots + \lambda_{1,k} x_k & = & 0 \\
    & \vdots & \\
    \lambda_{m,1} x_1 + \cdots + \lambda_{m,k} x_k & = & 0
\end{array}\right.\]
of linear equations is \emph{inseparable} if the following hold:
    \stepcounter{propcounter}
    \begin{enumerate}[label = {{{\normalfont\textbf{\Alph{propcounter}\arabic{enumi}}}}}]
        \item \label{prop:distinct generic inseparable} The elements $x_1, \ldots, x_{k} \in \Gamma$ are distinct.
        \item\label{prop:different subset sums generic inseparable} For every proper nonempty subset $S \subseteq [k]$, we have $\sum_{i \in S} \lambda_{t,i} x_i \neq 0$ for some $t \in [m]$.
    \end{enumerate}
\end{definition}

\noindent
\textbf{Remark.}
The reason for the name \emph{inseparable} comes from the fact that~\ref{prop:different subset sums generic inseparable} is equivalent to the condition that the set $[k]$ cannot be partitioned into nonempty subsets $S,S'$ satisfying
        \[\left \{ \begin{array}{c c c}\sum_{i \in S}\lambda_{1,i} x_i  & = & 0 \\
    & \vdots & \\
    \sum_{i \in S} \lambda_{m,i} x_i & = & 0
\end{array}\right. \qquad \text{and} \qquad
\left \{ \begin{array}{c c c}\sum_{i \in S'}\lambda_{1,i} x_i  & = & 0 \\
    & \vdots & \\
    \sum_{i \in S'} \lambda_{m,i} x_i & = & 0.
\end{array}\right.\]
We also note that in the special case that the system consists of the single equation~\eqref{eq:equal k-sums}, a solution with $x_1, \ldots, x_{2k}$ distinct is inseparable if, for every pair of subsets $S,T \subseteq [k]$, we have $\sum_{i \in S} x_i \neq \sum_{j \in T} x_{k+j}$ unless $S = T = \emptyset$ or $S = T = [k]$.

The second (stricter) notion of nondegeneracy we call \emph{additive genericity}, and we only define it for equations of a certain form. Intuitively, this notion requires that the set of elements involved in the solution be as close to being dissociated as possible, in the sense that the only subset sums that coincide are those required by the equation itself.

\begin{definition}
    Let $\Gamma$ be an Abelian group, let $k \geq 1$ be an integer, and let $\bar \sigma = (\sigma_1, \ldots, \sigma_k) \in \{\pm 1\}^k$ be a sequence of signs. We say that a solution $(x_1, \ldots, x_{k}) \in \Gamma^{k}$ to the equation
    \[\sigma_1 x_1 + \cdots + \sigma_k x_k = 0\]
    is \emph{additively generic} if, for every $\bar \tau = (\tau_1, \ldots, \tau_k) \in \{-1,0,1\}^k \setminus \{0, \pm \bar \sigma\}$, we have $\sum_{i=1}^k \tau_i x_i \neq 0$.
\end{definition}
\noindent
\textbf{Remark.} If we exclude the case $k=2$ and $\bar \sigma = \pm (1, -1)$, this condition is equivalent to saying that the set $\{x_1, \ldots, x_k\}$ has $2^k-1$ distinct subset sums, and in particular, it implies that $x_1, \ldots, x_{k}$ must be distinct.

\begin{definition}
    Let $\Gamma$ be an Abelian group. A subset $A \subseteq \Gamma$ is called a \emph{$B_k^{**}$-set} if there are no inseparable solutions $(x_1, \ldots, x_{2k}) \in A^{2k}$ to the equation~\eqref{eq:equal k-sums}. Similarly, $A$ is called a \emph{$B_k^{***}$-set} if there are no additively generic solutions $(x_1, \ldots, x_{2k}) \in A^{2k}$ to~\eqref{eq:equal k-sums}.
\end{definition}
Note that for $k \geq 2$, any $B_k^*$-set is also a $B_k^{**}$-set, and any $B_k^{**}$-set is also a $B_k^{***}$-set. Our first main result states that the bound of $O(N^{1/k})$ given by Ruzsa (\Cref{thm:ruzsa}) for the maximum size of a $B_k^*$-set in~$[N]$ also holds for $B_k^{***}$-sets (and therefore for $B_k^{**}$-sets as well). We also give a corresponding supersaturation result, which states that above this threshold, the number of additively generic solutions is always at least a constant fraction of the number of such solutions in a binomial random subset of $[N]$ of the same density.

\begin{theorem}\label{thm:additively generic integers}
    For every integer $k \geq 2$, there exist constants $C,c > 0$ such that for every positive integer $N$ and every subset $A \subseteq [N]$ of size at least $CN^{1/k}$, there are at least $c|A|^{2k}/N$ additively generic solutions $(x_1, \ldots, x_{2k}) \in A^{2k}$ to the equation~\eqref{eq:equal k-sums}.
    In particular, a $B_k^{***}$-set in $[N]$ has size at most~$CN^{1/k}$.
\end{theorem}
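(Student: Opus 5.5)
We plan to reduce the integer statement to a statement about a cyclic group of odd order and then argue there by a graph-theoretic counting argument in the style of the Bondy--Simonovits method. Given $A \subseteq [N]$, we embed $[N]$ into $\mathbb Z_M$ with $M$ an odd integer in $[2kN, 4kN]$. Every solution to \eqref{eq:equal k-sums} with entries in $\mathbb Z_M$ and integer representatives in $[N]$ is then a genuine integer solution. The same holds for each relation $\sum \tau_i x_i = 0$ with $\tau \in \{-1,0,1\}^{2k}$, since all such integer sums have absolute value less than $2kN < M$. So additive genericity in $\mathbb Z_M$ is equivalent to additive genericity in $\mathbb Z$, and it suffices to prove the group version: for any Abelian group $\Gamma$ of odd order $N$ and any $A\subseteq\Gamma$ with $|A|\ge CN^{1/k}$, there are at least $c|A|^{2k}/N$ additively generic solutions.

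\textbf{Counting all solutions.} Let $r(s)$ be the number of ordered $k$-tuples in $A^k$ with sum $s$. By Cauchy--Schwarz, the total number of solutions is $\sum_s r(s)^2 \ge |A|^{2k}/N$.

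\textbf{Discarding degenerate solutions.} A degenerate solution satisfies some extra relation $\sum \tau_i x_i = 0$ with $\tau \notin\{0,\pm\bar\sigma\}$. Combining this relation with \eqref{eq:equal k-sums} (by adding or subtracting), we obtain a nontrivial relation supported on a proper subset of the variables. Odd order is used here: it rules out the relation $2x=0$ with $x\ne 0$, and it lets us handle relations such as $x_i = x_j$ or $x_i+x_j=x_l+x_m$ among fewer variables. For each such pattern, the number of solutions factors through a lower-order equation, which we bound using counts of shorter solutions.

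The difficulty is that this naive approach gives error terms like $|A|^{2k-1}$ or $|A|^{2k-2}\cdot(\text{number of } B_2 \text{ solutions})$. These are not always $o(|A|^{2k}/N)$ when $|A|\approx N^{1/k}$; for example, $|A|^{2k-1}$ versus $|A|^{2k}/N$ requires $|A|\gg N$, which fails. So the main obstacle is that degenerate solutions can outnumber generic ones, and simple subtraction does not work.

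\textbf{Dense-subgraph reduction.} To get around this, we will mimic the proof of Bondy--Simonovits. Form the Cayley-sum bipartite graph $G$ between two copies of $\Gamma$, with $u\sim v$ iff $u+v\in A$. This graph is $|A|$-regular with $2N$ vertices. A solution to \eqref{eq:equal k-sums} corresponds to a closed walk of length $2k$ whose edge labels alternate between the $x$'s and the $y$'s. A generic solution corresponds to a $2k$-cycle that is ``label-rainbow'' and in which no proper sub-walk closes up, not even after re-signing labels. We first pass to a subset $A'\subseteq A$ and a structured subgraph (a Janzer-type regularization) in which the counts of shorter relations are controlled.

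\textbf{Counting via an expanding BFS.} We then carry out the layered breadth-first-search argument. Starting from a vertex, the ball of radius $k-1$ grows by a factor of roughly $|A|/(2k)$ per layer, unless short degenerate configurations occur; those configurations themselves yield many short relations, which we charge against. Once $|A|^{k}\ge C N$, the ball must overflow, which forces many collisions at distance $k$. Each collision gives a $2k$-cycle, and we will argue that a positive fraction of these are generic. Summing over starting vertices gives $\Omega(|A|^{2k}/N)$ generic solutions.

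The hardest step will be certifying genericity of the collisions: we must rule out coincidences of arbitrary signed subsums, not merely repeated elements. We would first try an induction on $k$, in which a failure of genericity is converted into an inseparable solution of a shorter system, whose count is bounded by the supersaturation statement for smaller $k$ applied to suitable sub-configurations.
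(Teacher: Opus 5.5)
There is a genuine gap. Your reduction from $[N]$ to $\mathbb Z_M$ with $M>2kN$ odd is valid, and your Cauchy--Schwarz count is correct. (The paper instead works in $\mathbb Z$ directly, which has no $2$-torsion, after noting $|(A')^{+k}|\le kN$ for subsets $A'$ of threshold size.) From that point on, though, the proposal describes what would have to be true rather than giving an argument, and its one concrete mechanism points the wrong way.

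The ``collisions at distance $k$'' in your BFS are just the Cauchy--Schwarz count in another guise. So the claim that a positive fraction of them are generic is the entire content of the theorem, and you yourself observed that you cannot bound the degenerate ones. Your remedy, charging non-generic solutions to shorter inseparable solutions ``whose count is bounded by the supersaturation statement for smaller $k$'', cannot work. Supersaturation gives \emph{lower} bounds on numbers of shorter solutions, whereas discarding degenerate solutions requires \emph{upper} bounds. No such global upper bounds exist: for an arithmetic progression $A$, the equation $x_1+x_2=x_3+x_4$ already has $\Theta(|A|^3)$ solutions. The ``Janzer-type regularization'' is never specified. Moreover, Janzer's rainbow-cycle method only controls repeated labels (distinctness of the $x_i$), not coincidences among arbitrary signed subsums, which, as you note, do not appear as vertex repetitions of the closed walk.

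The missing idea is how to obtain the needed upper bounds \emph{locally}. The paper works at the threshold $|A^{+k}|\le\eps|A|^k$, lifting to larger $A$ afterwards with the random-sampling lemma. It chooses the \emph{least} $\ell\le k$ and signs $\bar\sigma\in\{\pm1\}^\ell$ for which some set $Z$ with $|Z|\le\eps_\ell|A|^\ell$ carries $\bar\sigma$-weight at least $\gamma_\ell|A|^\ell$. Such an $\ell$ exists because $Z=A^{+k}$ works at $\ell=k$, and $\ell\ge2$ holds trivially.

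Minimality means that no signed $t$-fold sum with $t<\ell$ concentrates on a small set. A fibre-by-fibre count then shows that at least half the weight of $Z$ lies on $(\delta,\bar\sigma)$-robust elements. This count uses the absence of $2$-torsion to replace a full-support $\bar\tau$ by $(\bar\sigma\pm\bar\tau)/2$. Robust elements are those $z$ for which no signed partial sum $\sum_i\tau_ia_i$ with $\bar\tau\notin\{0,\pm\bar\sigma\}$ takes a fixed value on a $\delta$-fraction of the representations $z=\sum_i\sigma_ia_i$.

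Next, one prunes the bipartite graph between $\sigma_1A+\cdots+\sigma_{\ell-1}A$ and the robust part of $Z$ to obtain degree conditions. One then builds a tulip in three stages. First take a strongly dissociated $\ell$-term representation of some $z_0\in R$. Then take $k-\ell$ zigzag steps, each avoiding $\spn^{\pm2}(U)$. Finally choose a representation of $z_{k-\ell}$ none of whose signed partial sums lies in $\spn^{\pm}(U)$, which robustness of $z_{k-\ell}$ guarantees.

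Genericity is thus certified by construction, giving $\Omega(|A|^k)\asymp|A|^{2k}/N$ generic solutions at the threshold. The proportion of generic solutions among all solutions is never estimated.
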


We also prove an analogous result for finite Abelian groups. Here, we distinguish between groups of even and odd order, as the definition of a $B_k^{***}$-set begins to trivialize as we introduce elements of order $2$. For instance, in the group $\Gamma = \mathbb F_2^n$, a solution to~\eqref{eq:equal k-sums} for $k \geq 2$ cannot be additively generic because the subset sums of any set form a linear subspace of $\mathbb F_2^n$, so there cannot be exactly $2^{2k}-1$ of them. For this reason, in Abelian groups of even order, the best we can hope for in general is to find inseparable solutions to~\eqref{eq:equal k-sums}.

\begin{theorem}\label{thm:inseparable and additively generic finite group}
    For every integer $k \geq 2$, there exist constants $C, c > 0$ such that for every finite Abelian group $\Gamma$ of order $N$ and every subset $A \subseteq \Gamma$ of size at least $CN^{1/k}$, there are at least $c|A|^{2k}/N$ inseparable solutions $(x_1, \ldots, x_{2k}) \in A^{2k}$ to the equation~\eqref{eq:equal k-sums}.
    In particular, a $B_k^{**}$-set in $\Gamma$ has size at most $CN^{1/k}$.
    
    Moreover, if the order of $\Gamma$ is odd, then these solutions can be chosen to be additively generic. In particular, a $B_k^{***}$-set in $\Gamma$ has size at most $CN^{1/k}$.
\end{theorem}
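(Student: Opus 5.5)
We will count solutions through a graph-theoretic lens, mimicking the Bondy--Simonovits / Janzer approach to even cycles. To $A \subseteq \Gamma$ we associate the bipartite \emph{sum graph} $G_A$ with both parts equal to copies of $\Gamma$, where $u$ in the first part is joined to $v$ in the second part whenever $v - u \in A$, and the edge is labelled by $v-u$. This graph is $|A|$-regular on $2N$ vertices. A closed walk $u_0, v_1, u_1, v_2, \ldots, v_k, u_k = u_0$ that alternates forward edges (labels $x_1, \ldots, x_k$) and backward edges (labels $x_{k+1}, \ldots, x_{2k}$) gives exactly a solution of~\eqref{eq:equal k-sums}. Conversely, every solution gives $N$ such closed walks, one for each starting vertex. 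So the task becomes: find at least $cN\cdot|A|^{2k}/N$ closed $2k$-walks in $G_A$ whose label sequence is inseparable (respectively, additively generic).

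The key steps are as follows.

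\textbf{Step 1: count all homomorphic $2k$-cycles.} By the standard spectral or Sidorenko-type bound for even cycles, the number of closed $2k$-walks is at least $(2N)\,|A|^{2k}$ up to constants. Equivalently, the number of solutions in $A^{2k}$ is at least $|A|^{2k}/N$. One can see this directly from Fourier analysis: the count is $N^{-1}\sum_\chi |\hat 1_A(\chi)|^{2k} \ge |A|^{2k}/N$.

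\textbf{Step 2: bound the degenerate solutions.} A solution fails to be inseparable exactly when the index set splits into nonempty pieces $S \cup T$ and $S'\cup T'$ (with $S,S'$ in the first $k$ indices and $T,T'$ in the last $k$) such that $\sum_S x_i=\sum_T x_{k+j}$ and the complementary sums are also equal, or when two variables coincide. Fix such a splitting pattern. The solutions of that pattern factor as solutions of a smaller equation with $|S|$ versus $|T|$ terms times those of the complementary equation. If $|S|\ne|T|$, each piece is an unbalanced equation; we bound its solution count by $|A|^{a+b-1}$, by fixing all but one variable. If $|S|=|T|=j$ with $1\le j<k$, the count is $E_j(A)\,E_{k-j}(A)$, where $E_j$ is the $j$-fold additive energy. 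Coincidences $x_i=x_j$ similarly reduce to lower-order energies or to forced extra constraints.

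\textbf{Step 3: compare lower-order energies with the top one.} The inequality we need is $E_j E_{k-j} \ll E_k$ once $|A|\ge CN^{1/k}$. Here Hölder on the Fourier side is natural: $E_j = N^{-1}\sum|\hat1_A|^{2j}$. Combining this with $E_j\ge |A|^{2j}/N$ and $\hat 1_A(0)=|A|$, interpolation gives $E_j \le E_k^{(j-1)/(k-1)}|A|^{2(k-j)/(k-1)}$ (log-convexity in $j$, from $E_1=|A|^2$). Then
\[E_jE_{k-j}\le E_k^{(k-2)/(k-1)}|A|^{2k/(k-1)},\]
which is at most $\varepsilon E_k$ iff $E_k \ge \varepsilon^{-(k-1)}|A|^{2k}$. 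That only holds when $|A|^{2k}/N \gg |A|^{2k}$, which is false. So plain log-convexity is too weak, and this is the main obstacle.

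To handle it, I would instead use the Janzer-style ``regularization plus bounded-degree homomorphism counting'' argument on $G_A$. The aim is to show that walks in which the same pair of vertices is revisited at an intermediate time (which is precisely separability: a partial sum of the walk returns to the start) are few, relative to all $2k$-walks, once $|A|\ge CN^{1/k}$. The idea is to bound such walks by products of shorter closed walks and apply Janzer's lemma that, in a $K$-almost-regular graph with many $2k$-cycles, closed walks passing through ``heavy'' pairs are a small fraction. Vertex-transitivity of $G_A$ (it is a Cayley-type graph) makes it regular, so no regularization is needed. I expect the delicate work to lie here, in the counting of heavy pairs.

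\textbf{Step 4: the odd-order refinement.} Additive genericity additionally forbids relations $\sum\tau_ix_i=0$ with $\tau\in\{-1,0,1\}^{2k}$ not balanced along the equation. These include relations like $x_1+x_2=x_3$ or $2$-torsion-type coincidences. For each such $\tau$, the solutions satisfying both~\eqref{eq:equal k-sums} and $\tau$ lie in a system of rank 2. Since $|\Gamma|$ is odd, multiplication by $2$ is a bijection, so no linear combination degenerates. We can therefore fix all but two variables and solve uniquely, giving at most $|A|^{2k-2}=o(|A|^{2k}/N)$ solutions because $|A|^2\ge C^2N^{2/k}$… but this needs $|A|^2\gg N$. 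That is false for $k\ge3$, so instead I would again route such relations through the walk-splitting bound of Step 3, treating them as chords. The remaining task is then a union bound over the finitely many patterns $\tau$.
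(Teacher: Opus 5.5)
\textbf{The inseparable half: Steps 2 and 3.} Your Step~3 rests on an arithmetic slip, and it leads you to abandon a route that actually works for the inseparable half. By Parseval, $E_1=N^{-1}\sum_\chi|\hat{1}_A(\chi)|^2=|A|$ (it counts solutions of $x_1=x_2$), not $|A|^2$. So log-convexity gives $E_jE_{k-j}\le |A|^{k/(k-1)}E_k^{(k-2)/(k-1)}=E_k\,(|A|^k/E_k)^{1/(k-1)}\le C^{-k/(k-1)}E_k$, since $E_k\ge |A|^{2k}/N\ge C^k|A|^k$.

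Your Step~2 bound $|A|^{2k-2}$ for splittings with $|S|\ne|T|$ is not small compared with $|A|^{2k}/N$ once $k\ge 3$ and $|A|\approx CN^{1/k}$. Instead, bound each factor by $N^{-1}M_{a+b}$, where $M_p:=\sum_\chi|\hat{1}_A(\chi)|^p$. Then use $M_mM_{2k-m}\le M_2^{k/(k-1)}M_{2k}^{(k-2)/(k-1)}$ for $2\le m\le 2k-2$, which gives the same bound; if $m=1$, one factor is at most $1$. Same-side coincidences $x_1=x_2$ number at most $|A|\sqrt{E_{k-2}E_k}$ by Cauchy--Schwarz, which again interpolates to the same bound. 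Opposite-side coincidences are just splittings with $|S|=|T|=1$.

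With these repairs you get at least $E_k/2\ge |A|^{2k}/(2N)$ inseparable solutions directly. That is a noticeably shorter route to the first half than the paper's, which proves \Cref{thm:inseparable small growth} via robust elements and tulips and then samples with \Cref{prop:supersaturation lifting hypergraph}. The Janzer-style argument you substitute, by contrast, is only named, and its premise is false. Separability allows arbitrary $S,T$, including $|S|\ne|T|$ as in $x_1+x_2=x_{k+1}$, and such relations never make the alternating walk revisit a vertex.

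\textbf{The genuine gap: Step~4.} Take a forbidden relation such as $x_1+x_2=x_3$ with $1,2,3\le k$. Together with \eqref{eq:equal k-sums} it forms a rank-$2$ system whose variables do not separate into two blocks, so its count is not a product of lower energies. The natural estimate $\#\{(x_1,x_2)\in A^2: x_1+x_2\in A\}\cdot\sqrt{E_{k-3}E_k}\le |A|^2\sqrt{E_{k-3}E_k}$ (with $E_0=1$) already exceeds $E_k$ by a factor of order $|A|^{1/2}$ for a random set at the threshold. ``Treating them as chords'' supplies no mechanism here, so the ``moreover'' part of the theorem remains unproved.

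This is precisely what the paper's machinery provides. In \Cref{lem:robust bipartite graph additive} one takes $\ell$ minimal such that some $Z$ with $|Z|\le\eps_\ell|A|^\ell$ carries $\bar\sigma$-weight at least $\gamma_\ell|A|^\ell$ for some signs $\bar\sigma$. The absence of $2$-torsion lets one replace any witness $\bar\tau\in\{\pm1\}^\ell$ of non-robustness by $(\bar\sigma+\bar\tau)/2$, which is supported on a proper nonempty subset of $[\ell]$. Fiber counting with the minimality of $\ell$ then shows that most of the weight sits on $(\delta,\bar\sigma)$-robust elements. Solutions are built greedily along a $(k,\ell)$-tulip with a strongly dissociated $U_0$, and robustness of the closing element $z_{k-\ell}$ excludes every unwanted $\bar\tau$. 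Finally, \Cref{thm:truly additively generic small growth} is lifted to all $|A|\ge CN^{1/k}$ by \Cref{prop:supersaturation lifting hypergraph}.
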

\noindent
\textbf{Remark.} Our proof method does not actually require that the group have odd order in order to give the same conclusion for $B_k^{***}$-sets. All that is needed for the proof to go through is that the group have a bounded number of elements of order $2$. In particular, the maximum size of a $B_k^{***}$-set in the cyclic group $\mathbb Z_N$ is $O(N^{1/k})$ for any $N$, and the corresponding supersaturation result for additively generic solutions holds.

\subsubsection{Our methods: revisiting \texorpdfstring{Bondy--Simonovits}{Bondy-Simonovits}}\label{sec:intro graphs}
Our proofs of Theorems~\ref{thm:additively generic integers} and~\ref{thm:inseparable and additively generic finite group} follow a general strategy that can be employed in several other settings outside additive combinatorics. As a toy example, we use our method to give a new proof of the Bondy--Simonovits Theorem~\cite{bondy1974cycles} for graphs (discussed below), and we later give analogues of this theorem in vector space and matroid settings. We now discuss more of the background of the Bondy--Simonovits Theorem, and in \Cref{sec:graphs}, we outline our method in detail.

The problem of finding nondegenerate solutions to~\eqref{eq:equal k-sums} is reminiscent of the problem in extremal graph theory of finding cycles of a fixed even length $2k$ in graphs.
Similar to the simple argument outlined above for bounding the size of $B_k$-sets in $[N]$, it is not hard to see that any $n$-vertex graph with average degree at least $(2 + o(1))n^{1/k}$ contains two distinct paths of length $k$ between two vertices, and therefore contains a cycle of length \emph{at most} $2k$. Guaranteeing a cycle of length \emph{exactly} $2k$ at the same average degree threshold of~$\Theta(n^{1/k})$ is far less trivial. This was first accomplished by Erd\H{o}s (unpublished), and independently by Bondy and Simonovits~\cite{bondy1974cycles}, and their result has come to be known as the Bondy--Simonovits Theorem. 
\begin{theorem}[Erd\H{o}s; Bondy, Simonovits~\cite{bondy1974cycles}]\label{thm:bondy-simonovits}
    For every integer $k \geq 2$, there exists a constant $C > 0$ such that every $n$-vertex graph with at least $Cn^{1+1/k}$ edges contains a cycle of length $2k$.
\end{theorem}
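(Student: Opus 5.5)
The plan is to prove \Cref{thm:bondy-simonovits} by a breadth-first-search layering argument. The layers will be shown to grow by a factor of order $n^{1/k}$ unless a $C_{2k}$ appears, which forces $k$ layers to exceed $n$ vertices.

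\emph{Reduction.} First pass to a subgraph $G'$ with minimum degree $d \geq 2Cn^{1/k}$ by repeatedly deleting vertices of degree below half the average. One could also pass to a bipartite subgraph with at least half the edges. Then fix a root $v$ and let $L_i$ be the set of vertices at distance exactly $i$ from $v$.

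\emph{Key claim.} If $G'$ has no $C_{2k}$, then for each $i<k$, the graph $G'[L_i \cup L_{i+1}]$ together with the edges inside $L_i$ has average degree bounded by a constant $c_k$. This is the classical Bondy--Simonovits/Faudree--Simonovits lemma, and I expect it to be the main obstacle. To prove it, suppose the edges between or within the layers are dense, with average degree at least $c_k$. I would extract from them a subgraph $H$ of large minimum degree. I would then partition the BFS tree truncated at $L_i$ into two parts (the ``$\Theta$-graph'' trick): choose a vertex $w$ of the tree as low as possible such that the descendants of $w$ in $L_i$ still host a dense piece of $H$, and split the children subtrees of $w$ into two groups $X,Y$ each still carrying many $H$-edges. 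A bipartite graph with minimum degree $\geq 2k$ that is suitably colored into $X$-part and $Y$-part contains, by a path-extension argument, an $X$-to-$Y$ path of every length $\ell \leq 2k$ with an appropriate parity. Joining such a path of the right length with the two tree paths from its endpoints up to $w$ gives a cycle of length exactly $2k$: the tree contributes $2(i - \mathrm{depth}(w))$, and I choose $\ell$ to complete this to $2k$. The delicate point is ensuring that the required $\ell$ is at least $1$ and that the parity is achievable. One handles this using the fact that the path lengths available range over an interval of length of order $k$, together with the bipartite/non-bipartite case distinction.

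\emph{Growth.} Given the claim, count edges from $L_i$ forward. Each vertex of $L_i$ has degree at least $d$, and its neighbors lie in $L_{i-1}\cup L_i\cup L_{i+1}$. The edges going back into $L_{i-1}$ and the edges inside $L_i$ are controlled by the claim applied at levels $i-1$ and $i$: there are at most $c_k(|L_{i-1}|+|L_i|)$ of them. Hence $e(L_i,L_{i+1}) \geq (d - O(c_k))|L_i| - O(c_k)|L_{i-1}|$. Applying the claim again to $L_i\cup L_{i+1}$ gives $e(L_i,L_{i+1}) \leq c_k(|L_i|+|L_{i+1}|)$. Combining these and inducting shows $|L_{i+1}| \geq (d/(2c_k))|L_i|$. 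Therefore $|L_k| \geq (d/2c_k)^k > n$ once $C$ is large, which is a contradiction.
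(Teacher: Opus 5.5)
Your route is correct in outline, but it is the classical one (Bondy--Simonovits, streamlined by Verstra\"ete and by Pikhurko), not the paper's. The paper uses no BFS tree and no argument by contradiction. It passes to a $d$-out-regular digraph and counts walks via $\ell$-weights. It takes the least $\ell\le k$ for which some vertex $u$ sends $\gamma_\ell d^\ell$ walks of length $\ell$ into a set of only $\eps_\ell d^\ell$ vertices. Minimality then forces most of this weight onto robust endpoints, which yields a bipartite graph $L\to R$ with degree conditions (Lemma~\ref{lem:robust bipartite graph for digraph}). Finally it greedily embeds $\Omega(d^k)$ $(k,\ell)$-tulips rooted at $u$, with the closing path of length $\ell$ supplied by robustness.

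What each buys: your argument is shorter and gives a much better constant ($C=O(k)$, versus the $k^{O(k)}$ produced by the paper's parameter hierarchy). As written, however, it only gives existence. The paper's version produces many cycles through the edges at one vertex, which feeds the deletion-and-sampling proof of \Cref{thm:simonovits cycle supersaturation}. Its robustness notion is also what transfers to the Cayley-graph settings of Sections~\ref{sec:inseparable}--\ref{sec:linearly generic}, where additive coincidences must be avoided and a BFS tree gives no control.

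Two points in your key claim need tightening.

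\textbf{Commit to the bipartite reduction.} Then there are no edges inside layers. The required path length $2k-2(i-\mathrm{depth}(w))$ is automatically even and lies in $[2,2k-2]$, so no parity or case analysis arises. The non-bipartite version, with edges inside $L_i$, would genuinely need the Bondy--Simonovits theta-graph lemma.

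\textbf{Fix the splitting rule.} Asking for ``two groups $X,Y$ each still carrying many $H$-edges'' is not what the path step needs. $H$ could consist of two dense components, one below $X$ and one below $Y$, with no $X$--$Y$ path at all. Instead do the following.
\begin{itemize}
\item Let $H$ be a connected component of a subgraph of $G'[L_i,L_{i+1}]$ with minimum degree greater than $2k$.
\item Let $w$ be the lowest common ancestor of $V(H)\cap L_i$ in the BFS tree. Let $X$ be the vertices of $V(H)\cap L_i$ below one child of $w$, and $Y$ the rest; both are nonempty.
\item By connectivity, some $q\in V(H)\cap L_{i+1}$ has neighbours $x_0\in X$ and $y_0\in Y$.
\item Greedily grow a path $q,s_1,\dots,s_{2j-1}$ in $H$ avoiding $x_0,y_0$; then $s_{2j-1}\in L_i$.
\item Prepend $x_0$ if $s_{2j-1}\in Y$, or $y_0$ if $s_{2j-1}\in X$.
\end{itemize}
This gives an $X$--$Y$ path of every length $2j\le 2k-2$. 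Closing it with the two tree paths up to $w$, which meet $L_i\cup L_{i+1}$ only at the endpoints, gives the $C_{2k}$.
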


At the moment, this bound is only known to be tight when $k \in \{2,3,5\}$ \cite{benson1966minimal, lazebnik1995explicit, mellinger2005constructions, wenger1991extremal}. For other values of~$k$, it is a major open problem to determine the existence of $C_{2k}$-free graphs with $\Omega(n^{1+1/k})$ edges, where~$C_{2k}$ denotes the cycle on $2k$ vertices.

The Bondy--Simonovits Theorem is a special case of the \emph{Tur\'an problem} in extremal graph theory. The \emph{extremal number} (or \emph{Tur\'an number}) of a graph $H$, denoted $\ex(n,H)$, is the maximum number of edges in an $n$-vertex graph excluding $H$ as a subgraph. In this language, the Bondy--Simonovits Theorem (\Cref{thm:bondy-simonovits}) asserts that $\ex(n,C_{2k}) = O(n^{1+1/k})$ for every fixed $k \geq 2$.

The \emph{supersaturation problem} is then to guarantee not just one copy of $H$ above a certain threshold, but many copies of $H$. In~\cite{erdos1984cube}, Erd\H{o}s and Simonovits conjectured the following.
\begin{conjecture}[Erd\H{o}s, Simonovits~\cite{erdos1984cube}]\label{conj:erdos simonovits}
    For every bipartite graph $H$, there exist constants $\delta, C,c > 0$ such that every $n$-vertex graph $G$ with $\alpha n^2/2 \geq Cn^{2-\delta}$ edges contains at least $c\alpha^{e(H)}n^{v(H)}$ copies of $H$.
\end{conjecture}
Perhaps non-obviously, this is equivalent to a conjecture of Sidorenko~\cite{S93,S91} which states that for any bipartite graph $H$, constant $\alpha > 0$, and $n$-vertex graph $G$ with $\alpha n^2/2$ edges, there are at least $\alpha^{e(H)}n^{v(H)}$ graph homomorphisms from $H$ to $G$. Both of these counts are asymptotically best possible if true by considering the case where $G$ is a binomial random graph with edge density $\alpha$. Sidorenko's Conjecture has been extensively studied. For recent progress, see~\cite{conlon2010approximate,conlon2018advances,
conlon2017reflection,conlon2021blowups,conlon2024domination,
conlon2024extremal,coregliano2024left,coregliano2021biregularity,
hatami2010graph,im2026sidorenko,kim2016approaches,li2025logarithmic,
szegedy2014information}. In~\cite{erdos1984cube}, Erd\H{o}s and Simonovits showed interest in optimizing the value of $\delta$, and even the value of $C$, for which \Cref{conj:erdos simonovits} holds, and much of the subsequent work on their conjecture has been with this objective. In particular, it may be the case that for every bipartite graph $H$ with $\ex(n,H) = O(n^{2-\delta})$, there exist constants $C,c > 0$ such that every $n$-vertex graph with $\alpha n^2/2 \geq Cn^{2-\delta}$ edges contains at least $c\alpha^{e(H)}n^{v(H)}$ copies of $H$. Simonovits (see~\cite{erdos1984cube}) verified this stronger assertion for cycles of length $2k$ and $\delta = 1-1/k$ by proving the following strengthening of the Bondy--Simonovits Theorem.

\begin{theorem}[Simonovits (see~\cite{erdos1984cube})]\label{thm:simonovits cycle supersaturation}
For every integer $k \geq 2$, there exist constants $C,c > 0$ such that every $n$-vertex graph with at least $Cn^{1+1/k}$ edges contains at least $c \cdot e(G)^{2k}/n^{2k}$ cycles of length $2k$.
\end{theorem}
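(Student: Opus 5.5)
Our plan is to prove Simonovits' supersaturation theorem (\Cref{thm:simonovits cycle supersaturation}) by a counting-plus-cleaning argument in the spirit of the method announced in \Cref{sec:intro graphs}. First we reduce to a nearly regular setting. By a standard regularization lemma (Erd\H{o}s--Simonovits, or Jiang--Seiver), we pass to a subgraph $G'$ on $m$ vertices whose average degree $d$ is comparable to every vertex degree up to a constant factor $K$, and with $d \geq C' m^{1/k}$. We arrange that the count $d^{2k}$ in $G'$ dominates $(e(G)/n)^{2k}$ up to constants; if the regularization loses too much, we iterate by removing found cycles' vertices or by a dyadic degree decomposition. We also pass to a bipartite subgraph, losing at most half the edges.

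Next we count closed walks. In a $K$-almost-regular graph, the number of closed walks of length $2k$ is at least $m d^{2k}$, up to constants. This follows from the trace/spectral bound $\mathrm{tr}(A^{2k}) \geq \lambda_1^{2k} \geq d^{2k}$, or from Cauchy--Schwarz on the numbers $W_k(u,v)$ of walks of length $k$: summing $W_k(u,v)^2$ over pairs $(u,v)$ gives at least $(md^k)^2/m^2 \cdot m$. The task is then to show that a constant fraction of these closed walks of length $2k$ are genuine $2k$-cycles, i.e.\ that degenerate closed walks (those with a repeated vertex) number $o(md^{2k})$ once $d \geq C m^{1/k}$.

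The main obstacle is this degeneracy count. A closed $2k$-walk with a repeated vertex splits into two shorter closed walks, or contains a backtrack. Backtracking walks are bounded by $O(m d^{2k-1} \cdot K)$, which is negligible. Non-backtracking degenerate walks decompose into a union of shorter cycles joined by paths, with a ``theta'' or ``figure-eight'' shape. Naive bounds count these as roughly $m d^{2k-j}$ times the number of $j$-cycles, which requires controlling short even cycles. We plan to induct on $k$, using an \emph{upper} bound on closed walks of length $2j<2k$ through a vertex. This is where the $m^{1/k}$ threshold enters: the number of closed $2j$-walks through a given vertex is at most roughly $d^{2j}\cdot(\text{something})/m + d^{j}$ in a random-like regime. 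The dangerous case is a dense local structure, for instance a $K_{d,d}$-like piece, which creates many short cycles. To handle it, we first try to show that if degenerate walks dominate, then some vertex pair $(u,v)$ has $W_k(u,v)$ far larger than average. We then apply the Bondy--Simonovits ``paths between two vertices'' argument locally: many $u$--$v$ paths of length $k$ that are pairwise internally disjoint already yield many $2k$-cycles. Concretely, within the bundle of $u$--$v$ walks we extract $k$-paths via a greedy/dependent-random-choice step, and count pairs of them that are internally disjoint. This pair count gives at least $c\,W_k(u,v)^2$ cycles per pair $(u,v)$, and summing with convexity recovers $c\,e(G)^{2k}/n^{2k}$. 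The final step is dividing by the $4k$ automorphisms of $C_{2k}$ and undoing the regularization constants.
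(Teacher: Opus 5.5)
\textbf{There is a genuine gap: the step where you turn a heavy pair into cycles fails.} You claim that a pair $(u,v)$ with many walks of length $k$ supports at least $c\,W_k(u,v)^2$ cycles of length $2k$, by extracting many pairwise internally disjoint $u$--$v$ paths. This is false whenever the walks funnel through a bottleneck. If every $u$--$v$ walk of length $k$ passes through one vertex $y$ at step $t$, so that $W_k(u,v)=W_t(u,y)W_{k-t}(y,v)$, then no two of them are internally disjoint, however large $W_k(u,v)$ is. No greedy or dependent-random-choice step can change that. An excess of degenerate closed walks signals exactly such internal collisions, not merely a large $W_k(u,v)$. You also give no reason why bottlenecked pairs should carry a negligible share of $\sum_{u,v}W_k(u,v)^2$.

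The idea you are missing is the paper's choice of a \emph{minimal scale}. In a $d$-out-regular digraph, take the least $\ell\le k$ for which some vertex $u$ sends at least $\gamma_\ell d^\ell$ of its $\ell$-walks into a set $Z$ with $|Z|\le\varepsilon_\ell d^\ell$.
\begin{itemize}
\item Rooting at one vertex makes degeneracy trivial: only $O(\ell^2 d^{\ell-1})$ walks from $u$ repeat a vertex.
\item If the walks to some $z\in Z$ concentrate at some $(t,y)$, group such $z$ by $(t,y)$ into small and large fibres. This exhibits concentration at scale $\ell-t$ (rooted at $y$) or at scale $t$ (rooted at $u$). Minimality of $\ell$ therefore bounds the weight of all non-robust $z$, so most of the weight sits on $(\delta,\ell)$-robust pairs as in \Cref{def:robust graph}.
\item Because $\ell$ may be smaller than $k$, robustness is only available at scale $\ell$. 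The cycle is therefore built as a $(k,\ell)$-tulip: a robust $\ell$-path from $u$ to $z_0\in R$; then a zigzag $z_0\leftarrow y_1\to z_1\leftarrow\cdots\to z_{k-\ell}$ in a cleaned bipartite graph between the $(\ell-1)$-reachable vertices $L$ and $R\subseteq Z$; then a closing $\ell$-path to $z_{k-\ell}$ that avoids all used vertices, which robustness guarantees.
\end{itemize}
Your plan only addresses the case $\ell=k$, and even there it lacks the robustness that makes the closing step work.

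\textbf{The quantitative framework is also miscalibrated.}
\begin{itemize}
\item A $K$-almost-regular graph on $m$ vertices has at most $m(Kd)^{2k-1}$ closed walks of length $2k$, so your lower bound $md^{2k}$ is impossible. Cauchy--Schwarz gives $(md^k)^2/m^2=d^{2k}$, with no extra factor $m$; the spectral bound $\lambda_1^{2k}\ge d^{2k}$ gives the same.
\item Against the correct baseline $d^{2k}$, your bound $O(Kmd^{2k-1})$ on backtracking walks is of the order of the trivial upper bound on all closed $2k$-walks, a factor $m/d$ above $d^{2k}$. It is not negligible.
\item The inductive bound $d^{2j}/m+d^j$ on closed $2j$-walks at a vertex fails, as you anticipate, in structured graphs: in disjoint copies of $K_{d,d}$ it is $d^{2j-1}$ for $j\ge 2$. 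So the degeneracy count is not under control.
\item The Erd\H{o}s--Simonovits or Jiang--Seiver regularization only yields $G'$ on $m\ge n^{c}$ vertices with average degree of order $m^{1/k}$. This can be far below $e(G)/n$, so $\Omega(d^{2k})$ cycles in $G'$ may be polynomially fewer than $(e(G)/n)^{2k}$. ``Iterate or decompose dyadically'' does not repair this.
\end{itemize}

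For comparison, the paper needs no regularity. A minimum-degree subgraph, doubled, contains a $d$-out-regular digraph with $d=\lceil Cn^{1/k}\rceil$. The tulips give $\Omega(d^k)=\Omega(n)$ cycles through the at most $d$ edges at a single vertex. Deleting those edges and repeating at least $n$ times gives $\Omega(n^2)$ cycles at the threshold. Finally, \Cref{prop:supersaturation lifting hypergraph}, applied to random sets of $\lceil Cn^{1+1/k}\rceil$ edges, lifts this to $c\,e(G)^{2k}/n^{2k}$.
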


Our new proof of the Bondy--Simonovits Theorem actually recovers the stronger \Cref{thm:simonovits cycle supersaturation}, and this proof can be found in \Cref{sec:bondy proof}. Other proofs of the Bondy--Simonovits Theorem and strengthenings thereof can be found in
\cite{bukh2017bound, bukh2017erratum, pikhurko2012note,verstraete2000arithmetic}.
The corresponding supersaturation result also has been reproved, and generalizations have been established in~\cite{corsten2021balanced, janzer2023rainbow,jiang2020supersaturation, kim2024rainbow, morris2016number}.

\subsection{Systems of linear equations}

In the vector space setting, there is an even stricter notion of nondegeneracy of solutions to systems of equations which takes into account linear structure as well as Abelian group structure.

\begin{definition}
    Let $\mathbb F$ be a field, and let
    \[\label{eq:system}\left \{ \begin{array}{c c c}\lambda_{1,1} x_1 + \cdots + \lambda_{1,k} x_k & = & 0 \\
    & \vdots & \\
    \lambda_{m,1} x_1 + \cdots + \lambda_{m,k} x_k & = & 0
\end{array}\right.\tag{\textbf{LSys}}\]
    be a system of linear equations in variables $x_1, \ldots, x_k$ with coefficients $\lambda_{1,1}, \ldots, \lambda_{m,k} \in \mathbb F$. Given a vector space~$V$ over~$\mathbb F$, we say that a solution $(x_1, \ldots, x_k) \in V^k$ to~\eqref{eq:system} is $\mathbb F$-\emph{linearly generic} if, for every sequence $(\mu_1, \ldots, \mu_k) \in \mathbb F^k$ which is not an $\mathbb F$-linear combination of $(\lambda_{1,1}, \ldots, \lambda_{1,k}), \ldots, (\lambda_{m,1}, \ldots, \lambda_{m,k})$, we have 
    \[\mu_1x_1 + \cdots + \mu_k x_k \neq 0.\]
\end{definition}
When every nonzero equation implied by ~\eqref{eq:system} has at least $3$ nonzero coefficients, then~\eqref{eq:system} is called \emph{simple}.\footnote{Indeed,~\eqref{eq:system} is simple if and only if the corresponding matroid on $[k]$ whose circuits are the inclusion-minimal nonempty supports of $\mathbb F$-linear combinations of $(\lambda_{1,1}, \ldots, \lambda_{1,k}), \ldots, (\lambda_{m,1}, \ldots, \lambda_{m,k})$ is simple. We refer the reader to~\cite{oxley2006matroid} for matroid terminology used here and throughout.} In this case, linearly generic solutions to~\eqref{eq:system} can also be described geometrically, as follows. Let $(x_1, \ldots, x_k) = (f_1, \ldots, f_k) \in W^k$ be a fixed $\mathbb F$-linearly generic solution to~\eqref{eq:system} in some $\mathbb F$-vector space~$W$, and let $F := \{f_1, \ldots, f_k\} \subseteq W$. By shrinking $W$ if necessary, we can assume that $W = \spn(F)$. Then for any other $\mathbb F$-vector space $V$, the solutions $(x_1, \ldots, x_k) \in V^k$ to~\eqref{eq:system} in $V$ are precisely the images of $(f_1, \ldots, f_k)$ under linear maps $\varphi: W \to V$. Moreover, these solutions are $\mathbb F$-linearly generic if and only if $\varphi$ is injective. Thus we can think of linearly generic solutions to~\eqref{eq:system} in $V$ as ordered linear copies of this fixed \emph{linear configuration} $F$ in $V$. In particular, a larger host configuration $A \subseteq V$ contains an $\mathbb F$-linearly generic solution to~\eqref{eq:system} if and only if $A$ contains $F$ as a subconfiguration, up to linear isomorphism. When $\mathbb F = \mathbb F_q$ for some prime power $q$, we write $\ex_q(n,F)$ for the maximum size of a host configuration $A \subseteq \mathbb F_q^n$ with no linear copy of $F$.

In 1985, Furstenberg and Katznelson~\cite{FK85} proved that any subset of $\mathbb F_q^n$ of positive asymptotic density (in the parameter $n$) must contain large affine subspaces, so we know that $\ex_q(n,F) = o(q^n)$ whenever $F$ is a linear configuration which is contained in some affine subspace excluding the origin. Such configurations are called \emph{affine-type}. In terms of equations, we similarly say that the system~\eqref{eq:system} is \emph{affine-type} (or \emph{translation-invariant}), if it satisfies the property that for every $t \in [m]$, we have $\sum_{i=1}^k \lambda_{t,i} = 0$. Furstenberg and Katznelson's result then implies that the maximum size of $A \subseteq \mathbb F_q^n$ with no linearly generic solution to any affine-type system is $o(q^n)$. If a linear configuration $F$ satisfies the stronger condition that $\ex_q(n,F) = O((q^n)^{1-\delta})$ for some $\delta > 0$, then $F$ is called \emph{temperate}. Perhaps the most famous example of a temperate linear configuration is the $3$-term arithmetic progression over $\mathbb F_3$, which is temperate by the Cap Set Theorem of Ellenberg and Gijswijt~\cite{EG17}. The slice rank method that developed in the wake of their result has since been a major tool for demonstrating that a much larger class of linear configurations is temperate~\cite{BQ00, FP17, G21, S23}, including all affine-type linear configurations determined by a single equation, as well as all affine-type linear configurations over $\mathbb F_2$ and $\mathbb F_3$. But still, for $q > 3$, it is unknown if there exist affine-type linear configurations over $\mathbb F_q$ which are not temperate~\cite{G21}.

Some affine-type systems also exhibit a supersaturation phenomenon analogous to the one conjectured for bipartite graphs by Erd\H{o}s and Simonovits~\cite{erdos1984cube} and by Sidorenko~\cite{S93, S91}, as discussed in \Cref{sec:intro graphs}. Again, consider the system~\eqref{eq:system} of $m$ equations in $k$ variables over $\mathbb F_q$. Assume that~\eqref{eq:system} is simple and that none of the $m$ equations are implied by the others. As before, let $F = \{f_1, \ldots, f_k\} \subseteq W$ be a linear configuration such that $(x_1, \ldots, x_k) = (f_1, \ldots, f_k)$ is an $\mathbb F_q$-linearly generic solution to~\eqref{eq:system}. We then say that~\eqref{eq:system} is \emph{Sidorenko} if, for every linear configuration $A \subseteq \mathbb F_q^n$ of size $\alpha N$, the number of solutions to~\eqref{eq:system} in $A$ is at least $\alpha^k N^{k-m}$, where $N := q^n$. This is equivalent to saying that there are at least $\alpha^k N^{k-m}$ \emph{homomorphisms} from $F$ to $A$, where a map $\varphi : F \to A$ is a \emph{homomorphism of linear configurations} if $\varphi$ extends to a linear map $W \to \mathbb F_q^n$. In this case, the configuration $F$ is called \emph{Sidorenko}. Also, for perhaps non-obvious reasons analogous to the graph setting, this property is equivalent to the assertion that there exist constants $\delta, C, c > 0$ such that every linear configuration $A \subseteq \mathbb F_q^n$ of size $\alpha N \geq CN^{1-\delta}$ contains at least~$c \cdot \alpha^k N^{k-m}$ linear copies of $F$. Note that the \emph{rank} of $F$ is $k-m$, meaning that the elements of $F$ span a $(k-m)$-dimensional linear subspace of $W$. This leads us to ask the following question.
\begin{question}\label{ques:supersaturation Fq}
    For which $\mathbb F_q$-linear configurations $F$ and constants $\delta > 0$ satisfying $\ex_q(n,F) = O(N^{1-\delta})$ do there exist constants $C,c > 0$ such that every linear configuration $A \subseteq \mathbb F_q^n$ of size $\alpha N \geq CN^{1-\delta}$ contains at least $c \cdot \alpha^{|F|}N^{\mathrm{rank}(F)}$ linear copies of $F$, where $N := q^n$?
\end{question}

Saad and Wolf~\cite{SW17} were the first to study Sidorenko systems of equations. Among other things, they verified the Sidorenko property for the symmetric equation
\[\lambda_1 x_1 + \cdots + \lambda_k x_k = \lambda_1 x_{k+1} + \cdots + \lambda_k x_{2k}\tag{\ref{eq:symmetric equation}}\]
over any finite field $\mathbb F_q$ for any choice of coefficients $\lambda_1, \ldots, \lambda_k \in \mathbb F_q^*$. It turns out that~\eqref{eq:symmetric equation} is the only single equation with nonzero coefficients which is Sidorenko, as proved by Fox, Pham, and Zhao~\cite{FPZ21}. For systems of one or more equations, there has been much recent progress giving both positive and negative results~\cite{A22a, altman2023common,altman2025uncommonness,cho2024norming,dong2024uncommon,frederickson2025new, FY25,KLM22,KLM23,SW17,V21,V23}, but a full characterization has not been conjectured except over $\mathbb F_2$, where the present authors posed the following.
\begin{conjecture}[Frederickson, Yepremyan~\cite{FY25}]
    Every affine-type linear configuration in $\mathbb F_2^n$ is Sidorenko. Equivalently, every affine-type, simple system of linear equations over $\mathbb F_2$ is Sidorenko.
\end{conjecture}

Analogous questions have also been studied in integer and finite Abelian group settings~\cite{CE23,DMPT24,G20,SW17,V23,W10}.

\subsubsection{Linear configurations and graphs}

There is also a deep connection between Sidorenko's conjecture for graphs and the analogous problem for vector spaces.
To an $r$-partite, $r$-uniform hypergraph with vertex set $V(H) = \{v_1, \ldots, v_k\}$, we may naturally associate a linear configuration $\Vect_q[H] \subseteq \mathbb F_q^{k}$ given by
\[\Vect_q[H] := \left\{\sum_{j=1}^r e_{i_j} : v_{i_1} \cdots v_{i_r}  \in E(H), \: i_1 < \cdots < i_r\right\},\]
    where $e_i$ denotes the $i$-th standard basis vector in $\mathbb F_q^{k}$. In particular, if $H$ is a bipartite graph, then
    \[\Vect_q[H] := \left\{e_i + e_j : v_iv_j  \in E(H),\: i < j\right\}.\]
    Cho, Conlon, Lee, Skokan, and Versteegen~\cite{cho2024norming} observed that the Sidorenko property for a bipartite graph~$H$ implies the Sidorenko property for $\Vect_q[H]$, and the same holds for the so-called \emph{(weak) norming property} of graphs and linear configurations (equivalently, systems of equations). Their argument essentially amounts to considering host graphs which are \emph{Cayley sum graphs} for configurations $A \subseteq \mathbb F_q^n$, and it also generalizes to hypergraphs.
    
\begin{theorem}[Cho, Conlon, Lee, Skokan, Versteegen~\cite{cho2024norming}]
    Let $H$ be a $k$-vertex, $r$-partite, $r$-uniform hypergraph satisfying the Sidorenko property. Then for every prime power $q$, the linear configuration $\Vect_q[H] \subseteq \mathbb F_q^k$ is Sidorenko.
\end{theorem}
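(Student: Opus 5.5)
Fix $H$ with vertex set $\{v_1,\dots,v_k\}$ and $r$-partition $V_1\cup\cdots\cup V_r$. Let $F=\Vect_q[H]\subseteq\mathbb F_q^k$. A homomorphism $F\to A$ is the restriction of a linear map $\varphi:\mathbb F_q^k\to\mathbb F_q^n$ with $\varphi(e_{i_1}+\cdots+e_{i_r})\in A$ for every edge $v_{i_1}\cdots v_{i_r}$. Such a $\varphi$ is determined by the images $y_i:=\varphi(e_i)\in\mathbb F_q^n$, and these images are arbitrary. Hence the number of homomorphisms equals the number of tuples $(y_1,\dots,y_k)\in(\mathbb F_q^n)^k$ with $y_{i_1}+\cdots+y_{i_r}\in A$ for every edge. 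Writing $N=q^n$ and $|A|=\alpha N$, the target lower bound is
\[
\alpha^{|F|}N^{\mathrm{rank}(F)} .
\]

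The plan is to realize this count as a homomorphism count of $H$ into an auxiliary Cayley sum hypergraph $G_A$. Its vertex set consists of $r$ disjoint copies of $\mathbb F_q^n$, one per part. A tuple $(y^{(1)},\dots,y^{(r)})$, with one element from each copy, is an edge exactly when $y^{(1)}+\cdots+y^{(r)}\in A$. Then $G_A$ is an $r$-partite $r$-uniform hypergraph with $rN$ vertices and $N^{r-1}|A|=\alpha N^r$ edges, so its density relative to the complete $r$-partite hypergraph is $\alpha$.

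Part-respecting homomorphisms $H\to G_A$, sending $V_j$ into the $j$th copy, correspond bijectively to tuples $(y_i)$ as above. Each edge of $H$ meets each part exactly once, so the ordering of an edge matches the order of the parts and the edge condition reads $\sum_j y_{i_j}\in A$. The Sidorenko property for $r$-partite hypergraphs, in its partite form, then gives at least $\alpha^{e(H)}N^{k}$ such homomorphisms. If the hypothesis is stated in the non-partite normalized form, I would first pass to the partite version by the standard symmetrization or tensor-power argument.

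The remaining step is to reconcile the parameters. If $H$ has isolated vertices or repeated edge vectors, reduce to a simple configuration: note that distinct edges give distinct vectors $e_{i_1}+\cdots+e_{i_r}$, so $|F|=e(H)$. The rank of $F$ may be less than $k$, because the edge vectors need not span $\mathbb F_q^k$; this is the main obstacle. Let $U=\spn(F)$. A linear map on $U$ extends to $\mathbb F_q^k$ in exactly $N^{k-\mathrm{rank}(F)}$ ways, and every extension induces the same map $F\to A$. So the number of homomorphisms of the configuration, counted as linear maps on $U=\spn(F)$, is the tuple count divided by $N^{k-\mathrm{rank}F}$. This gives
\[
\#\mathrm{Hom}(F,A)\ \ge\ \alpha^{e(H)}N^{k}/N^{k-\mathrm{rank}F}=\alpha^{|F|}N^{\mathrm{rank}F},
\]
which is the Sidorenko property for $\Vect_q[H]$. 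The care required is in this quotient: the fibres of the map from tuples to configuration homomorphisms must be checked to be uniform, which holds because they are cosets of the maps vanishing on $U$.
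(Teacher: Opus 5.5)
Your proposal is correct and is essentially the Cayley sum (hyper)graph argument the paper attributes to Cho, Conlon, Lee, Skokan, and Versteegen, including the necessary division by $N^{k-\mathrm{rank}(F)}$ to pass from linear maps on $\mathbb F_q^k$ to homomorphisms of $\Vect_q[H]$ (this corrects the equality between the two counts claimed in your first paragraph). The partite host is an avoidable detour: the kernel $(y_1,\dots,y_r)\mapsto 1_A(y_1+\cdots+y_r)$ on $\mathbb F_q^n$ is already symmetric with density $\alpha$, so the non-partite Sidorenko inequality applies to it directly, and this same symmetry (every admissible part-assignment of $H$ gives the same count) is what makes your tensor-power reduction valid here, since for a general partite host that reduction only bounds the largest count over part-assignments.
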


It therefore makes sense to consider the following weakening of Sidorenko's conjecture.

\begin{conjecture}
    For every $k$-vertex bipartite graph $H$ and prime power $q$, the linear configuration $\Vect_q[H] \subseteq \mathbb F_q^k$ is Sidorenko.
\end{conjecture}

One may also wonder if bounds on the extremal number of a bipartite graph $H$ similarly imply anything about $\ex_q(n,\Vect_q[H])$. In particular, for which graphs $H$ and constants $\delta > 0$ satisfying $\ex(n,H) = O(n^{2-\delta})$ do we have $\ex_q(n,\Vect_q[H]) = O((q^n)^{1-\delta})$? We give a few examples of this phenomenon for particular $H$ and~$\delta$. The first gives an $\mathbb F_q$-analogue of the K\H{o}v\'ari--S\'os--Tur\'an Theorem~\cite{kovari1954problem}, which states that for every pair of positive integers $s \leq t$, the complete bipartite graph $K_{s,t}$ satisfies $\ex(n,K_{s,t}) = O(n^{2-1/s})$. In fact, it gives an analogue of its hypergraph generalization, due to Erd\H{o}s~\cite{erdos1964extremal}, that every $n$-vertex, $r$-uniform hypergraph with no copy of the $r$-uniform complete $r$-partite hypergraph $K^{(r)}_{s_1, \ldots, s_r}$ with parts of size $s_1 \leq \cdots \leq s_r$ has $O(n^{r-1/(s_1 \cdots s_{r-1})})$ edges, and it establishes a supersaturation result as in \Cref{ques:supersaturation Fq}.

\begin{theorem}\label{thm:kst vector space}
    For any positive integers $s_1 \leq \cdots \leq s_r$ and prime power $q$, there exist constants $C,c > 0$ such that every linear configuration $A \subseteq \mathbb F_q^n$ of size $|A| = \alpha N \geq CN^{1-1/(s_1 \cdots s_{r-1})}$ contains at least $c \cdot \alpha^{|F|}N^{\mathrm{rank}(F)}$ linear copies of $F = \Vect_q[K^{(r)}_{s_1, \ldots, s_r}]$, where $N := q^n$. In particular, $\ex_q(n,\Vect_q[K^{(r)}_{s_1, \ldots, s_r}]) < CN^{1-1/(s_1 \cdots s_{r-1})}$.
\end{theorem}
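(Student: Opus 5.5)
The plan is to prove the count by induction on $r$, using iterated H\"older (the usual proof of the Erd\H{o}s hypergraph K\H{o}v\'ari--S\'os--Tur\'an bound), and then to show that non-injective homomorphisms are a negligible fraction.

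\textbf{Setting up the count.} Write $H = K^{(r)}_{s_1,\ldots,s_r}$ with parts $P_1,\ldots,P_r$, $|P_j| = s_j$, and let $F = \Vect_q[H] \subseteq W := \spn(F)$. A homomorphism $F \to A$ is an assignment $v \mapsto w_v \in \mathbb F_q^n$ for $v \in V(H)$ such that $\sum_j w_{v_j} \in A$ for every edge $v_1\cdots v_r$. Two such assignments give the same map on $W$ exactly when they differ by the shifts $w_v \mapsto w_v + c_j$ for $v \in P_j$, with $\sum_j c_j = 0$. This shift ambiguity is what makes $\mathrm{rank}(F) = \sum_j s_j - (r-1)$. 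To remove it, I will normalise by fixing one vertex $p_j \in P_j$ with $w_{p_j} = 0$ for each $j<r$. Each homomorphism then has exactly one such representative.

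\textbf{Lower bound on homomorphisms.} For $r=1$ the statement is trivial. For the inductive step, fix the vectors on $P_1,\ldots,P_{r-1}$, which is an $(r-1)$-partite choice $\bar w$. Then each $w_v$ with $v\in P_r$ ranges over the ``common neighbourhood''
\[
D(\bar w) = \Bigl\{y : \textstyle\sum_j w_{u_j} + y \in A \text{ for all } (u_1,\ldots,u_{r-1}) \in P_1\times\cdots\times P_{r-1}\Bigr\}.
\]
The number of homomorphisms is therefore $\sum_{\bar w} |D(\bar w)|^{s_r}$.
\begin{itemize}
\item By H\"older, this is at least $(\#\bar w)^{1-s_r}\bigl(\sum_{\bar w}|D(\bar w)|\bigr)^{s_r}$.
\item Swapping the roles of $y$ and $P_{r-1}$, the sum $\sum_{\bar w}|D(\bar w)|$ is itself a homomorphism count for $K^{(r)}_{s_1,\ldots,s_{r-1},1}$, and the same H\"older step applies.
\end{itemize}
Iterating gives the Sidorenko-type bound $\hom(F,A) \ge \alpha^{|F|}N^{\mathrm{rank}(F)}$. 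The typical size of $D$ is $d := \alpha^{s_1\cdots s_{r-1}}N$, and the hypothesis $\alpha N \ge CN^{1-1/(s_1\cdots s_{r-1})}$ says exactly that $d \ge C^{s_1\cdots s_{r-1}}$ is a large constant. Alternatively, the lower bound follows from the Sidorenko property of complete $r$-partite hypergraphs together with the theorem of Cho, Conlon, Lee, Skokan and Versteegen.

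\textbf{Removing degenerate homomorphisms.} This is the main obstacle. A homomorphism fails to be a linear copy exactly when the induced linear map $W\to\mathbb F_q^n$ has a nonzero kernel vector $u \in W$. There are at most $q^{\mathrm{rank}(F)} = O(1)$ choices of $u$, so it suffices to show that, for each fixed $u$, the number of homomorphisms killing $u$ is at most $O(d^{-1})\cdot \alpha^{|F|}N^{\mathrm{rank}(F)}$. Choosing $C$ large then makes the degenerate homomorphisms at most half of all homomorphisms.

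To bound them, I will order the coordinates so that $u$ involves some vertex $v \in P_r$ (after permuting parts, since $H$ is symmetric up to part sizes). The relation $u\mapsto 0$ then pins $w_v$ to an affine function of the other vectors. In the $D$-count this replaces one factor $|D(\bar w)|$ by $1$. The count becomes $\sum_{\bar w}|D(\bar w)|^{s_r-1}$, possibly with further restrictions on $\bar w$ if $u$ also involves $P_1,\ldots,P_{r-1}$; in that case I apply induction to the smaller configuration, or simply drop those restrictions as an upper bound.

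The delicate point is that this needs an \emph{upper} bound of the form $\sum |D|^{s_r-1} \lesssim d^{-1}\sum|D|^{s_r}$. H\"older only gives this in the reverse direction unless $|D(\bar w)|$ is not too small on the relevant $\bar w$. I would handle it by splitting by the size of $|D(\bar w)|$:
\begin{itemize}
\item on the tuples $\bar w$ with $|D(\bar w)| \ge \varepsilon d$, the loss of one factor costs at most $(\varepsilon d)^{-1}$;
\item the tuples with $|D(\bar w)| < \varepsilon d$ contribute at most $(\varepsilon d)^{s_r-1}\cdot\#\bar w$, which is at most a small multiple of the main term $\alpha^{|F|}N^{\mathrm{rank}(F)} \approx d^{s_r}\cdot\#\bar w$ by the same H\"older calculation.
\end{itemize}
The second bullet also handles the non-degenerate count, which is what gives the supersaturation constant $c$. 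If this dyadic balancing does not close directly, the fallback is to first pass, by a standard cleaning step, to a subset of $A$ in which all relevant codegree-type counts are within constant factors of their averages, and then run the argument there.
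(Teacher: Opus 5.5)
Your normalisation and the H\"older lower bound $\hom(F,A)\ge\alpha^{|F|}N^{\mathrm{rank}(F)}$ are fine. The gap is in removing degenerate homomorphisms, at the words ``after permuting parts, since $H$ is symmetric up to part sizes''. The parts are not interchangeable. The hypothesis makes $d=\alpha^{s_1\cdots s_{r-1}}N$ large only because the last part is a largest one. Moving a part $P_j$ with $s_j<s_r$ to the end replaces $d$ by $\alpha^{\prod_{i\neq j}s_i}N$, which at the threshold is of order $N^{1-s_r/s_j}\to 0$. For kernel vectors supported on such parts your reduction is false, and these degenerate homomorphisms can outnumber the non-degenerate ones.

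Take $r=2$, $(s_1,s_2)=(2,3)$, $|A|=CN^{1/2}$, $P_1=\{v_1,v_2\}$, $P_2=\{z_1,z_2,z_3\}$, and $u=e_{v_1}-e_{v_2}\in W$. The homomorphisms killing $u$ are the normalised assignments with $w_{v_1}=w_{v_2}=0$ and $w_{z_1},w_{z_2},w_{z_3}\in A$. There are $|A|^3=C^3N^{3/2}$ of them, while $\alpha^{|F|}N^{\mathrm{rank}(F)}=\alpha^6N^4=C^6N$. For a random $A$ of this size, $\sum_{x\neq 0}|A\cap(A-x)|^3$, which bounds all remaining homomorphisms, is only $O(C^6N)$. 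So no argument of the form ``lower-bound all homomorphisms, then subtract the degenerate ones'' can work. The cleaning fallback does not help either, since the same computation applies to any subset of $A$.

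For kernels meeting $P_r$, by contrast, the inequality you flag as delicate is automatic, provided you compare with $\hom(F,A)=\sum_{\bar w}|D(\bar w)|^{s_r}$ rather than with $\alpha^{|F|}N^{\mathrm{rank}(F)}$. The power-mean inequality together with $\sum_{\bar w}|D(\bar w)|^{s_r}\ge \#\bar w\cdot d^{s_r}$ gives $\sum_{\bar w}|D(\bar w)|^{s_r-1}\le d^{-1}\hom(F,A)$. The bound against $\alpha^{|F|}N^{\mathrm{rank}(F)}$ that you wrote fails, e.g., for $A$ a subspace of size $o(N)$. So your plan does go through when all parts have the same size.

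What is missing in general is to impose non-degeneracy on the smaller parts \emph{before} any averaging, so that collapsed configurations never enter the convexity step. The paper does this by induction on $r$, peeling off the smallest part:
\begin{itemize}
\item It counts models $(u^{(i)}_j)$: tuples linearly independent modulo an auxiliary subspace $W'$ of bounded dimension, with every $u^{(1)}_{j_1}+\cdots+u^{(r)}_{j_r}\in A$.
\item For each $\bar u=(u_1,\ldots,u_{s_1})$, it applies the inductive hypothesis inside $A_{\bar u}=\bigcap_j(A-u_j)$, modulo $\spn(W'\cup\{u_1,\ldots,u_{s_1}\})$ and above the weaker threshold $C'N^{1-1/(s_2\cdots s_{r-1})}$.
\item It weights $\bar u$ by the number of tuples $(v,a_1,\ldots,a_{s_1})\in\mathbb F_q^n\times A^{s_1}$, independent modulo $W'$, with $a_j-v=u_j$. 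This weight is at most $|A_{\bar u}|$, vanishes unless $\bar u$ is independent, and has average at least $|A|^{s_1}/(10N^{s_1-1})$. Jensen with a thresholded convex function then gives the count.
\item Only at the end does it divide by the at most $q^{(s_1+\cdots+s_r)^2}N^{s_1+\cdots+s_r-\mathrm{rank}(F)}$ models sharing a content.
\end{itemize}
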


The bound $\ex_q(n,\Vect_q[K^{(r)}_{s_1, \ldots, s_r}]) = O(N^{1-1/(s_1 \cdots s_{r-1})})$ also follows directly from Lemma~4.1 in~\cite{FY25} (see also Lemma~6.2.1 in~\cite{Fred2026thesis}) by observing that $\Vect_q[K^{(r)}_{s_1, \ldots, s_r}]$ is a product of $r$ linearly independent sets. A slightly weaker version of this theorem in the $r=2$ case was additionally proved in~\cite{ge2026k} for simple $\mathbb F_q$-representable matroids with no restriction isomorphic to the cycle matroid of $K_{s,t}$. Our short, self-contained proof of \Cref{thm:kst vector space} can be found in \Cref{sec:kovari sos turan}.

Our second example motivated by extremal numbers of bipartite graphs gives an $\mathbb F_q$-analogue of a far-reaching generalization of the K\H{o}v\'ari--S\'os--Tur\'an Theorem due to Alon, Krivelevich, and Sudakov~\cite{alon2003turan}, which states that $\ex(n,H) = O(n^{2-1/s})$ holds for any bipartite graph $H$ in which all vertices in one of the two partition classes have degree at most $s$.

\begin{theorem}\label{thm:drc vector space}
    For every prime power $q$ and bipartite graph $H$ with maximum degree $s$ on one side, we have $\ex_q(n,\Vect_q[H]) = O(N^{1-1/s})$, where $N := q^n$.
\end{theorem}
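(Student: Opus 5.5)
We use dependent random choice on the Cayley sum graph of the host configuration, and then embed $H$ greedily, forcing linear independence at every step. Let $H$ have bipartition $X \cup Y$ with $|X| = a$, where every vertex of $Y$ has degree at most $s$. Adding isolated vertices to $X$ and padding neighbourhoods, we may assume $a \geq s$ and treat each $v \in Y$ as attached to an $s$-set $T_v \subseteq X$ containing $N_H(v)$. Put $h := v(H)$.

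\textbf{Reduction to an independent embedding.} Let $S \subseteq \mathbb F_q^n$ be a configuration of size $\alpha N \geq C N^{1-1/s}$. Suppose we find a map $f : V(H) \to \mathbb F_q^n$ such that:
\begin{enumerate}
\item $f(u) + f(v) \in S$ for every edge $uv \in E(H)$;
\item the vectors $f(w)$, $w \in V(H)$, are linearly independent.
\end{enumerate}
Let $\varphi : \mathbb F_q^{h} \to \mathbb F_q^n$ be the linear map $e_w \mapsto f(w)$. It is injective, so its restriction to $W = \spn(\Vect_q[H])$ is injective. It sends each $e_u + e_v$ to $f(u) + f(v) \in S$, so $\varphi$ gives a linear copy of $\Vect_q[H]$ in $S$. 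Requiring full independence is stronger than needed, but it costs only a bounded number of forbidden values at each step.

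\textbf{Dependent random choice.} Consider the bipartite Cayley sum graph $G$ on two copies of $\mathbb F_q^n$, with $x \sim y$ iff $x + y \in S$. It has $N$ vertices per side and every vertex has degree $\alpha N$. Choose $t_1, \ldots, t_s \in \mathbb F_q^n$ independently and uniformly, and let
\[U := \bigcap_{i=1}^s (S - t_i),\]
which is the common neighbourhood of the $t_i$. Then $\mathbb E|U| = \alpha^s N \geq C^s$.

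Fix $m := h + q^{h}$ and call an $s$-set $T$ \emph{bad} if its common neighbourhood $N_G(T)$ has fewer than $m$ elements. A bad $T$ lies in $U$ only if all $t_i \in N_G(T)$, which has probability at most $(m/N)^s$. Summing over at most $N^s$ sets, the expected number of bad $s$-sets inside $U$ is at most $m^s$.

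Fix an outcome with $|U| - \#\{\text{bad } T \subseteq U\} \geq C^s - m^s$. Deleting one vertex from each bad set leaves $U' \subseteq U$ with $|U'| \geq C^s - m^s$. Every $s$-subset of $U'$ has at least $m$ common neighbours in $G$. We take $C$ large enough that $C^s - m^s > a + q^{h}$.

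\textbf{Greedy embedding.} First embed $X$ into $U'$ one vertex at a time. At each step we avoid the span of the vectors already chosen, which has at most $q^{h}$ elements, so $|U'| > a + q^h$ makes this possible.

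Next embed each $v \in Y$ in turn. The set $f(T_v)$ is an $s$-subset of $U'$, so its common neighbourhood has at least $m > q^{h}$ elements. Choose $f(v)$ there, outside the span of all previously chosen vectors. Then $f(u) + f(v) \in S$ for every $u \in T_v \supseteq N_H(v)$, and independence is preserved. This verifies both conditions above, giving $\ex_q(n, \Vect_q[H]) < C N^{1-1/s}$.

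\textbf{Main obstacle.} The only delicate point is that a graph copy of $H$ in the Cayley sum graph does not by itself give a linear copy: degenerate linear relations could collapse it. Insisting on linear independence of the $f(w)$ handles this at the cost of avoiding at most $q^{h}$ values per step. The dependent random choice parameter $m$ is chosen exactly to leave room for this.
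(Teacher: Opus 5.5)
Your proposal is correct and follows essentially the same route as the paper: dependent random choice with $s$ uniformly random translates $U=\bigcap_{i=1}^s(A-\tilde v_i)$, deleting one element from each bad $s$-subset (one with only about $q^{v(H)}$ common neighbours), and then greedily embedding $H$ while keeping all chosen vectors linearly independent. The differences are cosmetic. You pad $H$ with isolated vertices so that each $N_H(v)$ sits inside an $s$-set $T_v$, whereas the paper simply uses that any set of at most $s$ elements of $U'$ lies in some $s$-subset of $U'$. You also bound the expected number of bad sets by $m^s$ rather than $b^s/s!$.
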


Though our proof does not give supersaturation in the same way that our other proofs do, we include it in order to highlight an application of the dependent random choice technique (see~\cite{fox2011dependent}) in the vector space setting. It is also possible to prove \Cref{thm:drc vector space} using the result of Alon, Krivelevich, and Sudakov~\cite{alon2003turan} as a black box.

\subsubsection{Linearly generic solutions to symmetric equations}

We now return to our discussion of finding nondegenerate solutions to symmetric linear equations. First observe that in $\mathbb F_2^n$, any inseparable solution to~\eqref{eq:equal k-sums} is necessarily $\mathbb F_2$-linearly generic. Similarly, in $\mathbb F_3^n$, any additively generic solution to~\eqref{eq:equal k-sums} is necessarily $\mathbb F_3$-linearly generic. Moreover, over $\mathbb F_2$ and $\mathbb F_3$, any symmetric linear equation of the form~\eqref{eq:symmetric equation} can be rearranged to become~\eqref{eq:equal k-sums}, up to renaming of variables. Thus we have the following corollary to \Cref{thm:inseparable and additively generic finite group}, which in particular establishes that for $q \in \{2,3\}$ and $k \geq 2$, we have $\ex_q(n,\Vect_q[C_{2k}]) = O(N^{1/k})$, where $N := q^n$, with supersaturation in the sense of \Cref{ques:supersaturation Fq}.
\begin{cor}\label{cor:linearly generic 2 and 3}
    For every integer $k \geq 2$ and $q \in \{2,3\}$, there exist constants $C,c > 0$ such that for every linear configuration $A \subseteq \mathbb F_q^n$ of size at least $CN^{1/k}$ and every $\lambda_1, \ldots, \lambda_k \in \mathbb F_q^*$, there exist at least $c|A|^{2k}/N$ many $\mathbb F_q$-linearly generic solutions to
    \[\lambda_1 x_1 + \cdots + \lambda_k x_k = \lambda_1 x_{k+1} + \cdots + \lambda_k x_{2k}, \tag{\ref{eq:symmetric equation}}\]
    where $N := q^n$.
\end{cor}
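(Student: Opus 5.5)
We derive \Cref{cor:linearly generic 2 and 3} directly from \Cref{thm:inseparable and additively generic finite group}, applied to the group $\Gamma = \mathbb F_q^n$ of order $N = q^n$. The plan has three steps: reduce the general symmetric equation to \eqref{eq:equal k-sums} by relabelling variables, invoke the theorem (inseparable solutions for $q=2$, additively generic ones for $q=3$, since $3^n$ is odd), and check that in each case the nondegeneracy we obtain upgrades to $\mathbb F_q$-linear genericity.

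\textbf{Reduction.} Since $\lambda_i \in \mathbb F_q^* \subseteq \{\pm 1\}$, each term $\lambda_i x_i$ with $\lambda_i = -1$ (which only happens for $q=3$) can be moved with its partner: the pair $-x_i$ on the left and $-x_{k+i}$ on the right becomes $x_{k+i}$ on the left and $x_i$ on the right. After a permutation $\pi$ of $[2k]$, \eqref{eq:symmetric equation} becomes exactly \eqref{eq:equal k-sums}. The coefficient vector of one equation is mapped to $\pm$ that of the other, so the row spaces correspond under $\pi$. Hence $\mathbb F_q$-linear genericity transfers between the two equations, and it suffices to treat \eqref{eq:equal k-sums}. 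Distinct solutions of \eqref{eq:equal k-sums} give distinct solutions of \eqref{eq:symmetric equation}, so the count $c|A|^{2k}/N$ transfers as well.

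\textbf{$q=2$.} Let $x$ be an inseparable solution of \eqref{eq:equal k-sums}. A coefficient vector $\mu \in \mathbb F_2^{2k}$ is simply a subset $U \subseteq [2k]$. Suppose $\sum_{i\in U} x_i = 0$ with $U \notin \{\emptyset, [2k]\}$, which is the row space of $(1,\dots,1)$ over $\mathbb F_2$. Since the sum over all of $[2k]$ is also $0$, we get $\sum_{i \in [2k]\setminus U} x_i = 0$. Thus $U$ and its complement are nonempty and give a separation, contradicting \ref{prop:different subset sums generic inseparable}. So $x$ is $\mathbb F_2$-linearly generic.

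\textbf{$q=3$.} Let $x$ be an additively generic solution, with $\bar\sigma = (1,\dots,1,-1,\dots,-1)$. Every $\mu \in \mathbb F_3^{2k}$ can be identified with some $\tau \in \{-1,0,1\}^{2k}$. The row space of the equation is $\{0, \pm\bar\sigma\}$, and additive genericity says precisely that $\sum \tau_i x_i \ne 0$ for all other $\tau$. In $\mathbb F_3^n$ the integer combination $\sum\tau_i x_i$ coincides with the $\mathbb F_3$-combination, so the two notions agree and $x$ is $\mathbb F_3$-linearly generic.

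In both cases the theorem provides at least $c|A|^{2k}/N$ such solutions whenever $|A| \ge CN^{1/k}$, with constants depending only on $k$. No step is a genuine obstacle. The points needing care are the identification of $\mathbb F_q$-coefficients with integer sign vectors and the use of the complement trick in the $q=2$ case.
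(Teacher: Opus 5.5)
Your proposal is correct and follows the paper's route. You reduce \eqref{eq:symmetric equation} to \eqref{eq:equal k-sums} by swapping $x_i$ and $x_{k+i}$ whenever $\lambda_i=-1$, apply \Cref{thm:inseparable and additively generic finite group} to $\Gamma=\mathbb F_q^n$, and note that inseparable solutions in $\mathbb F_2^n$ and additively generic solutions in $\mathbb F_3^n$ are automatically $\mathbb F_q$-linearly generic. The complement trick in your $q=2$ case is harmless but unnecessary, since over $\mathbb F_2$ the signs disappear and \ref{prop:different subset sums generic inseparable} already states directly that $\sum_{i\in U}x_i\neq 0$ for every proper nonempty $U\subseteq[2k]$.
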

When $q=2$, the bound $CN^{1/k}$ is tight up to the constant $C$ for all $k \geq 2$, due to a construction of Bose and Ray-Chaudhuri~\cite{bose1960class}, closely related to so-called \emph{BCH codes}.
\begin{theorem}[Bose, Ray-Chaudhuri~\cite{bose1960class}]\label{thm:bch}
    For every pair $r,k$ of positive integers, there exists a linear configuration $A \subseteq \mathbb F_2^{rk}$ of size $2^r - 1$ with the property that every $2k$ vectors in $A$ are linearly independent.
\end{theorem}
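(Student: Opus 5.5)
This is the classical BCH construction. Identify $\mathbb F_2^{r}$ with the field $\mathbb F_{2^r}$, and let $\alpha_1, \ldots, \alpha_{2^r-1}$ be the nonzero elements of $\mathbb F_{2^r}$. To each nonzero $\alpha$ we associate the vector
\[v(\alpha) := (\alpha, \alpha^3, \alpha^5, \ldots, \alpha^{2k-1}) \in \mathbb F_{2^r}^k,\]
which, via the $\mathbb F_2$-linear identification $\mathbb F_{2^r}^k \cong \mathbb F_2^{rk}$, is a vector in $\mathbb F_2^{rk}$. Set $A := \{v(\alpha) : \alpha \in \mathbb F_{2^r}^*\}$. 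The map $\alpha \mapsto v(\alpha)$ is injective because its first coordinate is $\alpha$, so $|A| = 2^r - 1$.

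It remains to show that any $2k$ (or fewer) distinct vectors in $A$ are $\mathbb F_2$-linearly independent. Over $\mathbb F_2$, a nontrivial dependency is a nonempty set $S$ of at most $2k$ distinct nonzero field elements with $\sum_{\alpha \in S} \alpha^{j} = 0$ for every odd $j \le 2k-1$.

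The key step is to extend these vanishing power sums to the even exponents. Since we are in characteristic $2$, squaring is additive, so
\[\sum_{\alpha\in S} \alpha^{2j} = \Big(\sum_{\alpha \in S} \alpha^{j}\Big)^2 .\]
Inducting on the $2$-adic valuation of the exponent, this shows $p_j := \sum_{\alpha \in S}\alpha^j = 0$ for every $1 \le j \le 2k$. Indeed, write $j = 2^a m$ with $m$ odd. Then $m \le 2k-1$ and $p_j = p_m^{2^a} = 0$.

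Now let $s = |S| \le 2k$. The vanishing of $p_1, \ldots, p_s$ says that the $s \times s$ matrix $(\alpha^j)_{1 \le j \le s,\ \alpha \in S}$ annihilates the all-ones vector. This matrix is a Vandermonde matrix with columns scaled by the nonzero elements $\alpha$, so its determinant is
\[\prod_{\alpha \in S} \alpha \prod_{\beta < \gamma} (\gamma - \beta) \neq 0,\]
because the elements of $S$ are distinct and nonzero. Hence the all-ones vector is zero, which is a contradiction since $S$ is nonempty.

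The only mildly delicate point is the Frobenius argument that recovers the even power sums. It is exactly why only the $k$ odd exponents are needed, giving dimension $rk$ rather than $2rk$. Everything else is routine.
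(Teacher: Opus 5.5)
Your proof is correct and complete. The paper does not prove this result itself but cites Bose and Ray-Chaudhuri, and your argument is the classical BCH parity-check construction behind that citation: odd power sums as coordinates, the Frobenius identity $p_{2j}=p_j^2$ to recover the even power sums, and the column-scaled Vandermonde determinant to rule out a nonempty dependent set of at most $2k$ elements.
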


When $q = 3$, the bound $CN^{1/k}$ in \Cref{cor:linearly generic 2 and 3} is also tight up to the constant $C$ for $k=2$, as Huang, Tait, and Won~\cite{huang2019sidon} constructed linear configurations $A \subseteq \mathbb F_3^n$ of size $\Theta(3^{n/2})$ such that every set of~$4$ vectors in $A$ is linearly independent (see also~\cite{pavese2025on}).

Over larger finite fields, Mimura and Tokushige~\cite{mimura2021solving} proved that when $q$ is prime, the maximum size of a subset of $\mathbb F_q^n$ with no solution to~\eqref{eq:symmetric equation} with distinct elements is~$O(N^{1/2})$ for any choice of coefficients $\lambda_1, \ldots, \lambda_k \in \mathbb F_q^*$, where $N := q^n$. Using the Sidorenko property for~\eqref{eq:symmetric equation}, one can easily deduce the stronger statement that over any finite field $\mathbb F_q$, the maximum size of a subset of $\mathbb F_q^n$ with no $\mathbb F_q$-linearly generic solution to~\eqref{eq:symmetric equation} is $O(N^{1/2})$ (see, for instance, Lemma 3.2 in~\cite{FY25}). For $k \geq 3$, this is, of course, quite far from the bound of $O(N^{1/k})$ given by \Cref{cor:linearly generic 2 and 3} for $q \in \{2,3\}$. However, for certain choices of field and coefficients, the bound $O(N^{1/2})$ is not far from the truth. In the same paper~\cite{mimura2021solving}, Mimura and Tokushige proved that for any $\delta > 0$, there exist a prime $p$, coefficients $\lambda_1, \ldots, \lambda_3 \in \mathbb F_p^*$, and a sequence of subsets $A \subseteq \mathbb F_p^n$ of size $\Omega(N^{1/2-\delta})$ with no solution to~\eqref{eq:symmetric equation} with distinct elements, where $N := p^n$. When we consider linearly generic solutions and fields of non-prime order, we see that such constructions can be made even larger; for any prime power $q$, we have that $A := \mathbb F_q^n \subseteq \mathbb F_{q^2}^n$ has size $N^{1/2}$ and has no linearly generic solution to any equation of the form~\eqref{eq:symmetric equation} with $\lambda_1 = 1$ and $\lambda_2 \in \mathbb F_{q^2} \setminus \mathbb F_q$, where $N := (q^2)^n$.

Thus, for $k \geq 3$, we see some natural difficulties for finding linearly generic solutions to an arbitrary single equation of the form~\eqref{eq:symmetric equation} in the host configurations of size $\Theta(N^{1/k})$ that we have been considering thus far. Instead, what we prove is that at this density, we can always find many linearly generic solutions to \emph{some} symmetric equation in $2k$ variables.

\begin{theorem}\label{thm:linearly generic}
    For every integer $k \geq 2$ and prime power $q$, there exist constants $C, c > 0$ such that for every linear configuration $A \subseteq \mathbb F_q^n$ of size at least $CN^{1/k}$, there exist nonzero coefficients $\lambda_1, \ldots, \lambda_k \in \mathbb F_q^*$ and at least $c|A|^{2k}/N$ many $\mathbb F_q$-linearly generic solutions $(x_1, \ldots, x_{2k}) \in A^{2k}$ to the equation
    \[\lambda_1x_1 + \cdots + \lambda_k x_k = \lambda_1x_{k+1} + \cdots + \lambda_kx_{2k}, \tag{\ref{eq:symmetric equation}}\]
    where~$N := q^n$.
\end{theorem}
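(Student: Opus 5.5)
We follow the same strategy as for \Cref{thm:inseparable and additively generic finite group}, modelled on our proof of the Bondy--Simonovits Theorem. Instead of sums of single elements, we work with \emph{scaled} elements: for a sequence $\bar\lambda=(\lambda_1,\ldots,\lambda_k)\in(\mathbb F_q^*)^k$, consider the $k$-walks $(x_1,\ldots,x_k)\in A^k$ and their weighted sums $\lambda_1x_1+\cdots+\lambda_kx_k$.

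\textbf{Step 1: many solutions for some $\bar\lambda$.} For each fixed $\bar\lambda$, Cauchy--Schwarz over the $N$ possible values of the weighted sum gives at least $|A|^{2k}/N$ solutions $(x_1,\ldots,x_{2k})\in A^{2k}$ to \eqref{eq:symmetric equation}. (This is the Sidorenko property of Saad and Wolf, and it holds here with an elementary proof.) The task is to show that, for a suitable $\bar\lambda$, a constant fraction of these solutions are $\mathbb F_q$-linearly generic. Since there are only $(q-1)^k=O(1)$ choices of $\bar\lambda$, averaging over all of them costs only a constant factor. Choosing $\bar\lambda$ cleverly is exactly how we get around the obstructions of Mimura--Tokushige type discussed before the theorem.

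\textbf{Step 2: bound the degenerate solutions.} A non-generic solution satisfies some extra relation $\sum\mu_ix_i=0$ with $\bar\mu\notin\spn\{(\bar\lambda,-\bar\lambda)\}$. There are $O(1)$ possible $\bar\mu$. For each one, the solution lies in a system of two independent equations. We count the solutions to such a system by induction on $k$. Using the two equations to eliminate a suitable pair of variables, we reduce to a configuration of rank at most $2k-2$. This should give $O(|A|^{2k-2}\cdot\text{stuff})$ solutions, which is $o(|A|^{2k}/N)$ when $|A|\ge CN^{1/k}$. Concretely, one splits according to the support of $\bar\mu$. If the relation restricted to a proper subset of the variables forces a shorter symmetric-type equation (for example $\sum_{i\in S}\lambda_i x_i=\sum_{i\in S}\lambda_i x_{k+i}$), we bound that case by counting solutions of length $2|S|$, of which there are at most $|A|^{2|S|}$, times choices for the rest constrained by the remaining equation.

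\textbf{Step 3: the main obstacle.} The hard case is a degenerate relation that is not itself short. In that case, a trivial count gives only $|A|^{2k-1}$, which is not $o(|A|^{2k}/N)$ when $|A|\approx N^{1/k}$. This is where the Bondy--Simonovits-style argument comes in. Let $W_j$ be the weighted $j$-fold "sumsets" counted with multiplicity, i.e.\ the functions $r_j(v)=\#\{(x_1,\ldots,x_j): \sum\lambda_ix_i=v\}$. We pick a scale $j\le k$ at which $\|r_j\|_2^2$ is not too much larger than $\|r_{j-1}\|_2^2|A|^2/N$. Such a scale exists by telescoping: the ratios multiply to at most the trivial total, and the total grows because $|A|^k\gg N$. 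At this scale, the collisions produced by each "extra" relation are dominated by the main term. The freedom to choose $\bar\lambda$ is used to ensure that no $\mathbb F_q$-subfield-type structure (like $\mathbb F_q^n\subseteq\mathbb F_{q^2}^n$) makes all relations $\mu$-degenerate. We pick $\bar\lambda$ by averaging, so that the mass of $r_k$ coming from solutions lying in lower-rank linear configurations is at most half the total.

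I expect this balancing step to be the crux. It requires controlling, uniformly over the $O(1)$ choices of $\bar\mu$, the number of solutions of two simultaneous equations by Cauchy--Schwarz/Hölder in terms of the norms $\|r_j\|_2$, with enough slack to absorb the constant $C$.
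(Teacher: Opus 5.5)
There is a genuine gap. It begins in Step~2, not only in the balancing step you flag.

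\textbf{Step 2 fails in the relevant range, even for short relations.} Eliminating two variables bounds the solutions of a two-equation system by $|A|^{2k-2}$. But at $|A|\approx CN^{1/k}$ the ratio $|A|^{2k-2}/(|A|^{2k}/N)=N/|A|^2\approx C^{-2}N^{1-2/k}$ tends to infinity for every $k\ge 3$. The comparison only works once $|A|\gg N^{1/2}$, so Steps~1--2 just reproduce the $O(N^{1/2})$ bound the introduction already derives from the Sidorenko property.

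Sharper bookkeeping will not fix this, because non-generic solutions can genuinely outnumber $|A|^{2k}/N$. Take $A$ to be a subspace. The solutions with $x_{k+i}=x_i$ for $3\le i\le k$ and $\lambda_1x_1+\lambda_2x_2=\lambda_1x_{k+1}+\lambda_2x_{k+2}$ number exactly $|A|^{k+1}$. They are not generic, and they exceed $|A|^{2k}/N\approx C^k|A|^k$ by a factor $|A|/C^k$. So any ``Cauchy--Schwarz total minus degenerate'' argument must compare the degenerate count with the true total $\|r_k\|_2^2$, not with its lower bound. That comparison is the whole difficulty.

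\textbf{Step 3 does not supply that comparison.} As written, its scale criterion cannot be guaranteed with bounded loss. At $j=1$ the ratio $\|r_1\|_2^2/(\|r_0\|_2^2|A|^2/N)$ is already $N/|A|$. The product of the $k$ ratios is $\|r_k\|_2^2N^k/|A|^{2k}\ge N^{k-1}$, so telescoping cannot produce a scale with ratio $O(1)$. Nothing explains how such a scale would control the counts for each extra relation $\bar\mu$. Nor does it explain how averaging over $\bar\lambda$ would defeat subfield-type obstructions.

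\textbf{The missing idea is to localize rather than count all solutions.} The paper works at the threshold $|A|\approx CN^{1/k}$, where $|A^{+k}|\le N\le\eps|A|^k$.
\begin{enumerate}
\item Take the least $\ell\le k$ for which some $\bar\lambda\in(\mathbb F_q^*)^\ell$ and some $Z$ with $|Z|\le\eps_\ell|A|^\ell$ satisfy $w_{\bar\lambda}(Z)\ge\gamma_\ell|A|^\ell$. The coefficients come out of this minimality, not from averaging.
\item The minimality holds for all coefficient vectors at every lower scale. So the fiber/potential argument of \Cref{lem:robust bipartite graph additive} goes through, after subtracting a multiple of $\bar\lambda$ from a bad relation $\bar\mu$ so that it is supported on a proper subset of $[\ell]$.
\item That argument shows most of $w_{\bar\lambda}(Z)$ lies on $(\delta,\bar\lambda)$-robust elements in the sense of \Cref{def:robust Fq}. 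It also yields sets $L,R$ with the required degree properties.
\item Build solutions greedily along a $(k,\ell)$-tulip. Start with a linearly independent $\ell$-term representation of some $z_0\in R$. Make $k-\ell$ back-and-forth steps between $R$ and $L$, each time choosing the new element outside $\spn(U)$ of the elements already chosen. Close up with a representation $\sum_i\lambda_ia_{k+i}=z_{k-\ell}$ such that $\sum_i\mu_ia_{k+i}\notin\spn(U)$ for every nonzero $\bar\mu$ not a multiple of $\bar\lambda$; robustness guarantees this is possible.
\item This gives $\Omega(|A|^k)$ linearly generic solutions for coefficients $(\lambda_1,\ldots,\lambda_\ell,\lambda_\ell,\ldots,\lambda_\ell)$. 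There is no need to show degenerate solutions are a minority of all solutions.
\item Finally, apply \Cref{prop:supersaturation lifting hypergraph} to subsets of size $\lceil CN^{1/k}\rceil$, and pigeonhole over the $(q-1)^k$ coefficient vectors, to get $c|A|^{2k}/N$ in general.
\end{enumerate}
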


\noindent
\textbf{Remark.} Our proof can be easily adapted to find at least $c|A|^{tk}/N^{t-1}$ linearly generic solutions to a system of the form
\[\lambda_1x_1 + \cdots + \lambda_k x_k = \lambda_1x_{k+1} + \cdots + \lambda_kx_{2k} = \cdots = \lambda_1x_{(t-1)k+1} + \cdots + \lambda_kx_{tk}\]
for any $t \geq 2$ in any linear configuration $A \subseteq \mathbb F_q^n$ of size at least $CN^{1/k}$, where $C,c > 0$ depend on $k$, $t$, and~$q$, and $\lambda_1, \ldots, \lambda_k$ may depend on $A$. Similarly, our proof of \Cref{thm:simonovits cycle supersaturation} can be modified to recover the known result that any $n$-vertex graph with more than $Cn^{1+1/k}$ edges contains at least $c \cdot e(G)^{tk}/n^{tk+t-2}$ copies of the \emph{theta graph} $\theta_{t,k}$, consisting of $t$ internally vertex-disjoint paths of length $k$ between two vertices, where $C,c > 0$ depend on $k$ and $t$. Faudree and Simonovits~\cite{faudree1983class} were the first to prove the bound $\ex(n,\theta_{t,k}) = O(n^{1+1/k})$. A supersaturation version was later given in~\cite{jiang2020supersaturation}, and a rainbow version was proved in~\cite{janzer2023rainbow}.

\subsubsection{An application to matroids}

\Cref{thm:linearly generic} has an especially nice interpretation in terms of matroids. Observe that for any choice of coefficients $\lambda_1, \ldots, \lambda_k \in \mathbb F_q^*$, if $(x_1, \ldots, x_{2k}) \in A^{2k}$ is an $\mathbb F_q$-linearly generic solution to~\eqref{eq:symmetric equation} in some $A \subseteq \mathbb F_q^n$, then the set $D := \{x_1, \ldots, x_{2k}\}$ is linearly dependent, but every proper subset of $D$ is linearly independent. In other words, $D$ corresponds to a \emph{circuit} in the vector matroid of the $(n \times |A|)$ matrix whose columns are the elements of $A$. Thus \Cref{thm:linearly generic} gives an $\mathbb F_q$-representable matroid analogue of the Bondy--Simonovits Theorem, with supersaturation, for every prime power $q$.

\begin{cor}\label{cor:matroid bondy-simonovits}
    For every integer $k \geq 2$ and prime power $q$, there exist constants $C, c > 0$ such that every simple $\mathbb F_q$-representable matroid $M$ of rank $n$ and size $|E(M)| \geq CN^{1/k}$ contains at least $c|E(M)|^{2k}/N$ circuits of size exactly $2k$, where $N := q^n$.
\end{cor}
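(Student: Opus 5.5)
This corollary follows from \Cref{thm:linearly generic}, provided we check that linearly generic solutions give circuits of size $2k$ and account for how solutions map to circuits.

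\emph{Translation to configurations.} Fix a representation of $M$ over $\mathbb F_q$. We may assume it is $n$-dimensional: a rank-$n$ representable matroid is represented by an $n\times|E(M)|$ matrix of full row rank, obtained by deleting redundant rows and projecting. Since $M$ is simple, the columns are nonzero and pairwise non-parallel, so they form a set $A\subseteq\mathbb F_q^n$ with $|A|=|E(M)|$, and $E(M)$ is in bijection with $A$. With $N=q^n$ and $|A|\ge CN^{1/k}$, \Cref{thm:linearly generic} gives coefficients $\lambda_1,\dots,\lambda_k\in\mathbb F_q^*$ and at least $c|A|^{2k}/N$ linearly generic solutions $(x_1,\dots,x_{2k})\in A^{2k}$ to \eqref{eq:symmetric equation}.

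\emph{Each solution is a circuit of size $2k$.} The relation space of \eqref{eq:symmetric equation} is the span of the single vector $\lambda=(\lambda_1,\dots,\lambda_k,-\lambda_1,\dots,-\lambda_k)$, and every coordinate of $\lambda$ is nonzero.
\begin{itemize}
\item The $x_i$ are distinct. If $x_i=x_j$ with $i\neq j$, then $e_i-e_j$ would be a relation. It is not a multiple of $\lambda$, because $2k\ge 4$ and a multiple of $\lambda$ with two zero coordinates must be zero. This contradicts genericity.
\item $D=\{x_1,\dots,x_{2k}\}$ is dependent, since $\lambda$ itself gives a nontrivial relation.
\item Every proper subset of $D$ is independent. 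A relation supported on a proper subset has a zero coordinate, so it cannot be a nonzero multiple of $\lambda$. By genericity it must therefore be zero.
\end{itemize}
Hence $D$ is a circuit of $M$ with exactly $2k$ elements.

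\emph{Counting.} Several solutions can give the same circuit. Given a circuit $D$ of size $2k$, an ordering of $D$ is determined by the solution, and there are at most $(2k)!$ orderings. Since the $\lambda_i$ are fixed, each ordering gives at most one valid solution. So the number of distinct circuits of size $2k$ is at least $c|A|^{2k}/((2k)!\,N)$. Replacing $c$ by $c/(2k)!$ gives the corollary, with the same $C$.

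The only real content is the genericity-to-circuit step, which is immediate because all coefficients are nonzero. The main subtlety is making sure the representation lives in exactly $\mathbb F_q^n$, so that $N=q^n$ matches the rank; the full-row-rank reduction above handles this.
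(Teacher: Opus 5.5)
Your proposal is correct and follows the paper's argument: the paper deduces the corollary directly from Theorem~\ref{thm:linearly generic} by observing that an $\mathbb F_q$-linearly generic solution to \eqref{eq:symmetric equation} with all coefficients nonzero forms a dependent set whose proper subsets are all independent, i.e., a circuit of size $2k$ in the vector matroid on $A$. The extra steps you supply, namely reducing to a full-row-rank representation in $\mathbb F_q^n$ so that $N=q^n$ matches the rank, proving the $x_i$ are distinct, and dividing by $(2k)!$ for the orderings of each circuit, are the routine bookkeeping the paper leaves implicit.
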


Recall that when $q=2$, we know by \Cref{thm:bch} that for every $k \geq 2$, there exist rank-$n$ simple binary matroids of size $\Theta(2^{n/k})$ with girth greater than $2k$, making \Cref{cor:matroid bondy-simonovits} tight over $\mathbb F_2$. This is in contrast with the graph setting, where we only know that the Bondy--Simonovits Theorem is tight for $k \in \{2,3,5\}$.
\begin{cor}
    For every $k \geq 2$, the maximum size of a rank-$n$ simple binary matroid with no circuit of size exactly $2k$ is $\Theta(N^{1/k})$, where $N := 2^n$.
\end{cor}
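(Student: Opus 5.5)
The corollary follows directly from \Cref{cor:matroid bondy-simonovits} together with the Bose--Ray-Chaudhuri construction (\Cref{thm:bch}). The two parts of the $\Theta$ are handled separately.

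For the upper bound, let $M$ be a rank-$n$ simple binary matroid with no circuit of size $2k$. By \Cref{cor:matroid bondy-simonovits} with $q=2$, if $|E(M)| \geq CN^{1/k}$ then $M$ contains at least $c|E(M)|^{2k}/N>0$ circuits of size exactly $2k$. This is a contradiction, so $|E(M)| < CN^{1/k} = O(N^{1/k})$.

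For the lower bound, write $n = rk + s$ with $0 \leq s < k$ and $r = \lfloor n/k \rfloor$; we may take $r \geq 1$, since otherwise a single element suffices for large enough constants.
\begin{enumerate}
\item \Cref{thm:bch} gives $A \subseteq \mathbb F_2^{rk}$ with $|A| = 2^r-1$ such that every $2k$ vectors of $A$ are linearly independent.
\item Since $k \geq 2$, this independence applies to every set of at most $2k$ vectors of $A$. In particular $0 \notin A$, and the elements of $A$ are distinct. Hence the vector matroid $M_A$ of $A$ is simple and has no circuit of size at most $2k$; in particular none of size exactly $2k$.
\item To get rank exactly $n$, embed $\mathbb F_2^{rk} \hookrightarrow \mathbb F_2^n$. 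Replace $A$ by its restriction to a basis-spanning subset if needed so that $A$ spans a space of dimension $d \le rk$, then adjoin $n-d$ new standard basis vectors $e_{d+1},\dots,e_n$ lying outside $\spn(A)$.
\item These added elements are coloops: each is not in the span of the remaining elements, so it lies in no circuit. Adding them therefore preserves simplicity and creates no new circuits, and the resulting matroid has rank $n$.
\end{enumerate}
Its size is at least $2^r - 1 \geq 2^{n/k - 1} - 1 = \Omega(N^{1/k})$, where $N=2^n$.

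Most of this is routine. The point needing care is that the BCH configuration may not span $\mathbb F_2^{rk}$, and that rounding $n$ down to a multiple of $k$ loses only a constant factor $2^{-(k-1)/k}$; the coloop padding and the floor estimate in steps 3–4 handle both.
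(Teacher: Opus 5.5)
Your proof is correct and takes the same approach as the paper: the upper bound is \Cref{cor:matroid bondy-simonovits} with $q=2$, the lower bound is the Bose--Ray-Chaudhuri configuration of \Cref{thm:bch}, and your coloop padding and the estimate $2^{\lfloor n/k\rfloor}-1=\Omega(N^{1/k})$ fill in details the paper leaves implicit. One small fix in step~3: when $d<rk$, the literal vectors $e_{d+1},\dots,e_n$ need not be independent modulo $\spn(A)$, so instead take any vectors completing a basis of $\spn(A)$ to a basis of $\mathbb F_2^n$, which is exactly the coloop property you use in step~4.
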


\noindent
\textbf{Notation.} Whenever we are in the context of a specified finite Abelian group $\Gamma$ (or vector space as a special case), we reserve the variable $N$ for the order of $\Gamma$, which we may not always specify. For an Abelian group~$\Gamma$ and a finite subset $U = \{a_1, \ldots, a_k\} \subseteq \Gamma$ with $a_1, \ldots, a_k$ distinct, we define
\begin{align*}
\spn^{\pm}(U) &:= \left\{\sum_{i=1}^k \sigma_i a_i : \sigma_i \in \{-1, 0, 1\} \text{ for all $i \in [k]$}\right\}, \\
    \spn^{\pm 2}(U) &:= \left\{\sum_{i=1}^k \sigma_i a_i : \sigma_i \in \{-2,-1, 0, 1, 2\} \text{ for all $i \in [k]$}\right\}.
\end{align*}
We say that $U$ is \emph{dissociated} if $\sum_{i=1}^k \sigma_i a_i \neq 0$ for every nonzero tuple $(\sigma_1, \ldots, \sigma_k) \in \{-1,0,1\}^k$. Equivalently,~$U$ is dissociated if the $2^{k}$ subset sums $\sum_{i \in S} a_i$ for $S \subseteq [k]$ are distinct. Moreover, we say that $U$ is \emph{strongly dissociated} if $\sum_{i=1}^k \sigma_i a_i \neq 0$ for every nonzero tuple $(\sigma_1, \ldots, \sigma_k) \in \{-2,-1,0,1,2\}^k$. 
\vspace{10pt}

\noindent
\textbf{Organization.} The rest of the paper is organized as follows. In \Cref{sec:prelim}, we record a standard supersaturation technique and give a short proof of \Cref{thm:Bk construction vector space}, which constructs large $B_k$-sets in finite vector spaces. \Cref{sec:kovari sos turan} is self-contained and proves our K\H{o}v\'ari--S\'os-Tur\'an-type theorems for vector spaces (Theorems~\ref{thm:kst vector space} and ~\ref{thm:drc vector space}). In \Cref{sec:graphs}, we outline our main proof method and use it to prove the Bondy--Simonovits Theorem as a toy example. Sections~\ref{sec:inseparable} and~\ref{sec:additively generic} are devoted to proving Theorems~\ref{thm:additively generic integers} and~\ref{thm:inseparable and additively generic finite group}, which find nondegenerate solutions to~\eqref{eq:equal k-sums} in Abelian group settings. In \Cref{sec:linearly generic}, we discuss the straightforward way in which these proofs adapt to the vector space setting in order to prove \Cref{thm:linearly generic}. Finally, we pose several open problems in \Cref{sec:concluding remarks}.

\section{Preliminaries}\label{sec:prelim}

In order to prove some of our supersaturation results, we will use the following standard random sampling argument to convert supersaturation results which hold \emph{at} a certain density threshold into supersaturation results which hold \emph{anywhere above} that same threshold.
\begin{prop}\label{prop:supersaturation lifting hypergraph}
    Let $r \leq m \leq n$, and let $c > 0$. If $\mathcal H$ is a $r$-uniform hypergraph on $n$ vertices in which every set of $m$ vertices contains at least $cm^r$ edges of $\mathcal H$, then $e(\mathcal H) \geq cn^r$.
\end{prop}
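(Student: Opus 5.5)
We prove the statement by double counting pairs $(S,e)$, where $S$ is an $m$-subset of the vertex set $V(\mathcal H)$ and $e \in E(\mathcal H)$ is an edge with $e \subseteq S$.

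First, the hypothesis says every $m$-set $S$ contains at least $cm^r$ edges. Summing over all $\binom{n}{m}$ choices of $S$, the number of such pairs is at least $\binom{n}{m} c m^r$.

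Second, each edge $e$ has exactly $r$ vertices. It therefore lies in exactly $\binom{n-r}{m-r}$ of the $m$-sets. So the number of pairs equals $e(\mathcal H)\binom{n-r}{m-r}$.

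Comparing the two counts gives
\[
e(\mathcal H) \geq c m^r \frac{\binom{n}{m}}{\binom{n-r}{m-r}} = c m^r \frac{n(n-1)\cdots(n-r+1)}{m(m-1)\cdots(m-r+1)} = c \prod_{i=0}^{r-1} \frac{m(n-i)}{m-i}.
\]

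It remains to show this product is at least $n^r$, which holds factor by factor. For $0 \le i \le r-1$ we have $i < r \le m$, so $m-i \ge 1$. Then
\[
\frac{m(n-i)}{m-i} \geq n \iff mn - mi \geq mn - ni \iff ni \geq mi,
\]
and the last inequality holds because $n \geq m$. Multiplying the $r$ factors gives $e(\mathcal H) \geq c n^r$.

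The argument is one averaging step, and the only care needed is the direction of the factorwise inequality. It goes the right way because $m \le n$. The hypothesis $r \le m$ is exactly what makes $\binom{n-r}{m-r}$ well defined and every $m-i$ positive.
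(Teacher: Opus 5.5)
Your proof is correct and is essentially the paper's argument. Your double count of pairs $(S,e)$ is the paper's computation of $\mathbb E[e(\mathcal H[S])]$ for a uniformly random $m$-set $S$, and your factorwise inequality $\frac{m(n-i)}{m-i}\ge n$ is exactly the bound $\binom{m}{r}/\binom{n}{r}\le m^r/n^r$, which the paper states without verifying and which you check explicitly.
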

\begin{proof}
    Let $S$ be a subset of $V(\mathcal H)$ of size exactly $m$ chosen uniformly at random. By assumption, we have $e(\mathcal H[S]) \geq c m^r$ with probability $1$, and so
    \[\mathbb E[e(\mathcal H[S])] \geq cm^r.\]
    On the other hand, each edge of $\mathcal H$ is contained in $S$ with probability $\binom{m}{r}/\binom{n}{r} \leq m^r/n^r$, so 
    \[\mathbb E[e(\mathcal H[S])] \leq \frac{m^r}{n^r}e(\mathcal H).\]
    Combining these two estimates on $\mathbb E[e(\mathcal H[S])]$ gives the desired count on $e(\mathcal H)$.
\end{proof}
In the graph setting, \Cref{prop:supersaturation lifting hypergraph} implies that if we can show that any $n$-vertex graph with \emph{exactly} $m$ edges contains at least $cm^{e(H)}/n^{2e(H)-v(H)}$ copies of some graph $H$, then it immediately follows that any $n$-vertex graph with \emph{at least} $m$ edges contains at least $c \cdot e(G)^{e(H)}/n^{2e(H)-v(H)}$ copies of $H$. We will also apply \Cref{prop:supersaturation lifting hypergraph} in additive and vector space settings to an analogous effect.

\subsection{Large \texorpdfstring{$B_k$}{B_k}-sets in finite vector spaces}
We now give the proof of \Cref{thm:Bk construction vector space}, which asserts the existence of a $B_k$-set in $\mathbb F_q^n$ of size $\Omega(N^{1/k})$, where $N := q^n$, whenever the characteristic of $\mathbb F_q$ exceeds $k$. For this, we use a simple moment curve construction over a field extension. Note that when the characteristic of $\mathbb F_q$ is at most $k$, we trivially have that the maximum size of a $B_k$-set in $\mathbb F_q^n$ is~$1$ for every~$n$. For $B_k^*$-sets, this triviality vanishes, and in fact, \Cref{thm:bch} constructs $B_k^*$-sets in $\mathbb F_q^n$ of size $\Omega(N^{1/k})$ whenever $q$ is even. It would be interesting to see if there exist $B_k^*$-sets in $\mathbb F_q^n$ of size $\Omega(N^{1/k})$ for odd prime powers $q \leq k$.
\begin{proof}[Proof of \Cref{thm:Bk construction vector space}]
    Let $p > k$ be prime. If $r=0$, then the statement is trivial, so assume $r \geq 1$. The $\mathbb F_{p^r}$-vector space $\mathbb F_{p^r}^k$ is also an $\mathbb F_p$-vector space, which we can identify with $\mathbb F_p^{rk}$. Define
    \[A := \{(\alpha , \alpha^2 , \ldots , \alpha^k) : \alpha \in \mathbb F_{p^r}\} \subseteq \mathbb F_{p^r}^k \cong \mathbb F_p^{rk}.\]
    Note that $A$ has size exactly $p^r$. To see that $A$ is a $B_k$-set, suppose we have $\alpha_1, \ldots, \alpha_{2k} \in \mathbb F_{p^r}$ satisfying 
    \[\sum_{i=1}^k \alpha_i^\ell = \sum_{i=1}^k \alpha_{k+i}^\ell\]
    for every $1 \leq \ell \leq k$. Because $p > k$, it follows from Newton's identities that the coefficients of the polynomials $(x-\alpha_1) \cdots (x - \alpha_k)$ and $(x - \alpha_{k+1})\cdots (x-\alpha_{2k})$ in $\mathbb F_{p^r}[x]$ coincide, and thus $(\alpha_{k+1}, \ldots, \alpha_{2k})$ is a permutation of $(\alpha_1, \ldots, \alpha_k)$.

    Now if $\mathbb F_q$ has characteristic $p$, then given a positive integer $n$, choose $r \geq 0$ maximum such that $p^{rk} \leq N := q^n$. Since $\mathbb F_p^{rk}$ as an Abelian group is isomorphic to a subgroup of $\mathbb F_q^n$, we see by the previous paragraph that $\mathbb F_q^n$ contains a $B_k$-set of size $p^r > N^{1/k}/p$.
\end{proof}

\section{General bounds for excluded \texorpdfstring{$r$-partite}{r-partite} graphic linear configurations}\label{sec:kovari sos turan}

Next, we will give two short proofs of the $\mathbb F_q$-analogue of the K\H{o}v\'ari--S\'os--Tur\'an Theorem: namely 
\[\ex_q(n,\Vect_q[K_{s,t}]) = O(N^{1-1/s}),\] where $N := q^n$. The first proof establishes supersaturation of copies of $\Vect_q[K_{s,t}]$ and generalizes to $r$-uniform, complete $r$-partite hypergraphs, while the second one extends the extremal bound $O(N^{1-1/s})$ to a larger class of linear configurations arising from bipartite graphs. We begin with the hypergraph version with supersaturation, namely \Cref{thm:kst vector space}. For counting purposes, it will be convenient to define the following auxiliary object which witnesses a linear copy of $\Vect_q[K^{(r)}_{s_1, \ldots, s_r}]$.

\begin{definition}
    Let $A \subseteq \mathbb F_q^n$. A \emph{model} for $\Vect_q[K^{(r)}_{s_1, \ldots, s_r}]$ in $A$ is a linearly independent indexed tuple $(u^{(i)}_j)_{i,j} \in (\mathbb F_q^n)^{s_1 + \cdots + s_r}$, where the indices run over all pairs $(i,j)$ with $1 \leq i \leq r$ and $1 \leq j \leq s_i$, such that $u^{(1)}_{j_1} + \cdots + u^{(r)}_{j_r} \in A$ for all $(j_1, \ldots, j_r) \in [s_1] \times \dots \times [s_r]$. The \emph{content} of the model $(u^{(i)}_j)_{i,j}$ is the set 
    \[\left\{u^{(1)}_{j_1} + \cdots + u^{(r)}_{j_r} : (j_1, \ldots, j_r) \in [s_1] \times \cdots \times [s_r]\right\} \subseteq A,\]
    which is a linear copy of $\Vect_q[K^{(r)}_{s_1, \ldots, s_r}]$ in $A$.
\end{definition}
We now prove a counting lemma for models of $\Vect_q[K^{(r)}_{s_1, \ldots, s_r}]$, which for the purposes of induction, places an additional technical requirement on these models: that they be linearly independent modulo some subspace $W$ of constant dimension, which we remind the reader means that no nontrivial linear combination of these vectors is in $W$.
\begin{lemma}\label{lem:many models}
    For any integer $d \geq 0$, positive integers $r, s_1, \ldots, s_r$, and prime power $q$, there exist constants $C, c > 0$ such that for every linear configuration $A \subseteq \mathbb F_q^n$ of size $|A| = \alpha N \geq CN^{1-1/(s_1 \cdots s_{r-1})}$ and every $d$-dimensional subspace $W \subseteq \mathbb F_q^n$, there are at least $ c\alpha^{s_1 \cdots s_r}N^{s_1 + \cdots + s_r}$ models for $\Vect_q[K^{(r)}_{s_1, \ldots, s_r}]$ in $A$ which are linearly independent modulo $W$, where $N := q^n$.
\end{lemma}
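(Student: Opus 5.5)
We argue by induction on $r$, carrying the subspace $W$ along; the modulo-$W$ condition exists precisely so that the induction can absorb previously chosen vectors into $W$.

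\textbf{Base case $r=1$.} Here the exponent condition reads $|A| \geq CN^{0} = C$, and a model is a linearly independent tuple $(u_1,\dots,u_{s_1})$ of elements of $A$, independent modulo $W$. We choose the $u_j$ one at a time. At step $j$, each previous choice together with $W$ spans a subspace of dimension at most $d+s_1$, which has at most $q^{d+s_1}$ elements. We must avoid these, so at least $|A|-q^{d+s_1}\geq |A|/2$ choices remain once $C\geq 2q^{d+s_1}$. This gives at least $2^{-s_1}|A|^{s_1} = 2^{-s_1}\alpha^{s_1}N^{s_1}$ models.

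\textbf{Inductive step.} Let $r\ge2$. We treat the first $r-1$ parts as a single ``part'' whose members are the contents of sub-models, and use the last coordinate $u^{(r)}_j$ as the free vector. Set $t:=s_1\cdots s_{r-1}$.

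\emph{Step 1: fibers.} For each $y\in\mathbb F_q^n$, let $A_y := (A - y)$, and consider models of $\Vect_q[K^{(r-1)}_{s_1,\dots,s_{r-1}}]$ in $A_y$. A tuple $(u^{(i)}_j)_{i<r}$ together with $u^{(r)}_j = y_j$ for $j\in[s_r]$ is a model for the full configuration exactly when, for every $j\le s_r$, the tuple $(u^{(i)}_j)_{i<r}$ is a model in $A_{y_j}$, and all vectors are jointly independent modulo $W$.

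\emph{Step 2: counting by Jensen.} Fix the lower-level tuple $\bar u=(u^{(i)}_j)_{i<r}$, and let $d(\bar u)$ be the number of $y$ with $\bar u + y \subseteq A$ (entrywise on the content). Then the number of full models is
\[\sum_{\bar u} d(\bar u)^{s_r}\]
minus degenerate choices, namely those where the $y_j$ are not independent modulo $W+\spn(\bar u)$. The degenerate choices number at most $O(q^{d+\sum s_i}\,d(\bar u)^{s_r-1})$ per $\bar u$. We also have
\[\sum_{\bar u} d(\bar u) = \#\{(\bar u,y)\}.\]
Doing the inner count the other way, for each fixed $y$ we apply the induction hypothesis to $A_y$ with subspace $W' = W+\spn(y)$ of dimension $d+1$. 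This is why $d$ is kept arbitrary: the constants are allowed to depend on $d$, and the induction uses $d+1$. However, $A_y$ has the same density $\alpha$, and the hypothesis needs $\alpha N\geq CN^{1-1/(s_1\cdots s_{r-2})}$, which is weaker than what we assume, so it applies. Summing over $y$ gives
\[\sum_{\bar u} d(\bar u) \geq c\,\alpha^{t}N^{\sum_{i<r}s_i+1}.\]
Since the number of tuples $\bar u$ is at most $N^{\sum_{i<r}s_i}$, Jensen's inequality yields
\[\sum_{\bar u} d(\bar u)^{s_r}\ge N^{\sum_{i<r}s_i}\bigl(c\alpha^tN\bigr)^{s_r} = c^{s_r}\alpha^{ts_r}N^{\sum_i s_i}.\]

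\emph{Step 3: degenerate choices (main obstacle).} We must show the degenerate choices are negligible. By Hölder, their total is at most
\[q^{O(1)}\sum_{\bar u} d(\bar u)^{s_r-1} \le q^{O(1)}\, N^{\sum_{i<r}s_i/s_r}\Bigl(\sum_{\bar u} d(\bar u)^{s_r}\Bigr)^{(s_r-1)/s_r}.\]
This is at most half of $\sum_{\bar u} d(\bar u)^{s_r}$ provided $\sum_{\bar u} d(\bar u)^{s_r}\ge q^{O(1)}N^{\sum_{i<r}s_i}$, which by Step 2 holds when $(\alpha^tN)^{s_r}\ge C'$. That is, we need $\alpha \ge C''N^{-1/t}$, which is exactly the hypothesis $|A|\ge CN^{1-1/t}$ for large $C$.

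This step is where I expect the bookkeeping to be most delicate. In particular, one must check that the dependence of $y_j$ on $\bar u$ is controlled by a subspace of bounded dimension, so that the degenerate $y_j$ lie in a set of size at most $q^{d+\sum s_i}$; and one must verify that translating $A$ by $y$ interacts correctly with independence modulo $W+\spn(y)$.
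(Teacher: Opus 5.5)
Your argument is correct, but it peels off a different part than the paper does.

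\textbf{The paper's route.} The paper removes the \emph{first} part. For each $\bar u=(u_1,\dots,u_{s_1})$ it applies the inductive hypothesis (parts $s_2,\dots,s_r$, subspace $W+\spn(u_1,\dots,u_{s_1})$ of dimension $d+s_1$) inside the common-translate set $A_{\bar u}=\bigcap_j(A-u_j)$. The density of $A_{\bar u}$ varies with $\bar u$, so the hypothesis only applies when $|A_{\bar u}|\ge m_0$. This forces the truncated convex function $g$, and the weighted averaging through the map $f$ on independent tuples to get $w_{\mathrm{avg}}\ge 2m_0$. The mod-$W$ strengthening, with the subspace growing at each step, is what makes the peeled vectors jointly independent with the submodel.

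\textbf{Your route.} You remove the \emph{last} part and apply the hypothesis only to translates $A-y$. These all have density exactly $\alpha$, so the threshold is automatic and no truncation is needed. Jensen on the extension counts $d(\bar u)$ then gives the exponent as $(\alpha^{s_1\cdots s_{r-1}})^{s_r}$ rather than $(\alpha^{s_1})^{s_2\cdots s_r}$. The price is the separate H\"older estimate for degenerate $s_r$-tuples. This is exactly where $|A|\ge CN^{1-1/(s_1\cdots s_{r-1})}$ enters, via $c\,\alpha^{s_1\cdots s_{r-1}}N\ge 2s_rq^{d+s_1+\cdots+s_r}$; in the paper it enters through $w_{\mathrm{avg}}\ge 2m_0$.

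\textbf{Two small points.} First, in your scheme the subspace never needs to grow. Independence of the $y_j$ modulo $W+\spn(\bar u)$ is handled entirely in Step~3, so you can apply the hypothesis to $A-y$ with $W$ itself. Your enlargement to $W+\spn(y)$ (which has a wrinkle when $y\in W$) is therefore unnecessary, and so is the claim that this is ``why $d$ is kept arbitrary''; your induction would close with $d=0$ throughout. Second, you should say explicitly that every sum over $\bar u$ ranges only over tuples independent modulo $W$, which is what the inductive count produces. For a dependent $\bar u$, every extension is degenerate, and the per-$\bar u$ bound $O(q^{d+\sum_i s_i}\,d(\bar u)^{s_r-1})$ fails.
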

\begin{proof}
    We will prove the lemma by induction on $r$. By choosing a larger constant $C$ if necessary, we can assume that $n$ is sufficiently large. If $r=1$, then we are counting $s_1$-tuples $(u_1, \ldots, u_{s_1}) \in A^{s_1}$ which are linearly independent modulo $W$, of which there are at least
    \[(|A|-q^d)(|A|-q^{d+1})\cdots (|A|-q^{d+s_1-1}) \geq \prod_{i=1}^{s_1}\left(|A| - \frac{|A|}{2^{i}}\right) \geq \frac{|A|^{s_1}}{10} = \frac{1}{10} \alpha^{s_1}N^{s_1}\]
    when $|A| \geq C \geq q^{s_1 + d}$, as required.

    Now assume that $r \geq 2$. For a sequence $\bar u = (u_1, \ldots, u_{s_1}) \in (\mathbb F_q^n)^{s_1}$, let
    \[A_{\bar u} := \bigcap_{j=1}^{s_1} (A - u_j) \subseteq \mathbb F_q^n\]
    denote the set of vectors $v \in \mathbb F_q^n$ such that $u_j + v \in A$ for all $j \in [s_1]$. Let $C',c' > 0$ be the constants given by the inductive hypothesis with the parameters $r,C, c, d$ replaced by $r-1,C', c', d + s_1$, respectively. Let $m_0 := C'N^{1-1/(s_2 \cdots s_{r-1})}$. If $\bar u$ is linearly independent modulo~$W$ and $|A_{\bar u}| \geq m_0$, then there are at least $c' (|A_{\bar u}|/N)^{s_2 \cdots s_r}N^{s_2 + \cdots + s_r}$ models for $\Vect_q[K^{(r-1)}_{s_2, \ldots, s_r}]$ in $A_{\bar u}$ which are linearly independent modulo $\spn(W \cup \{u_1, \ldots, u_{s_1}\})$. For each such model $(u^{(i)}_j)_{i,j}$, with indices running over pairs $(i,j)$ with $2 \leq i \leq r$ and $1 \leq j \leq s_i$, we have for every $(j_1, \ldots, j_r) \in [s_1] \times \cdots \times [s_r]$ that $u^{(2)}_{j_2} + \cdots + u^{(r)}_{j_r} \in A_{\bar u}$, and hence $u_{j_1} + u^{(2)}_{j_2} + \cdots + u^{(r)}_{j_r} \in A$. Thus, setting $u^{(1)}_j = u_j$ for $j \in [s_1]$, we have that $(u^{(i)}_j)_{i,j}$, with $i \in [r]$ and $j \in [s_i]$, is a model for $\Vect_q[K^{(r)}_{s_1, \ldots, s_r}]$ in $A$ which is linearly independent modulo~$W$. In summary, for any $\bar u = (u_1, \ldots, u_{s_1}) \in (\mathbb F_q^n)^{s_1}$ which is linearly independent modulo~$W$, with $|A_{\bar u}| \geq m_0$, the number of models $(u^{(i)}_j)_{i,j}$ for $\Vect_q[K^{(r)}_{s_1, \ldots, s_r}]$ in $A$ which are linearly independent modulo~$W$, with $u^{(1)}_j = u_j$ for all $j \in [s_1]$, is at least
    \begin{equation}\label{eq:initial model count}
        c' (|A_{\bar u}|/N)^{s_2 \cdots s_r}N^{s_2 + \cdots + s_r}.
    \end{equation}

    In order to get a lower bound on the total number of models for $\Vect_q[K^{(r)}_{s_1, \ldots, s_r}]$ we find this way, we define some notation. Let $\mathrm{ind}_W(\mathbb F_q^n \times A^{s_1})$ denote the set of linearly independent sequences (mod $W$) of the form $(v, a_1, \ldots, a_{s_1}) \in \mathbb F_q^n \times A^{s_1}$, which has size at least 
    \[(|A|-q^d)(|A|-q^{d+1})\cdots (|A|-q^{d+s_1-1})(N-q^{d+s_1}) \geq \prod_{i=1}^{s_1}\left(|A| - \frac{|A|}{2^{i}}\right) \cdot \frac{N}{2} \geq \frac{N|A|^{s_1}}{10}\]
    since $|A| \geq CN^{1-1/(s_1 \cdots s_{r-1})}$ and $n$ is sufficiently large.
    Consider the function $f : \mathrm{ind}_W(\mathbb F_q^n \times A^{s_1}) \to (\mathbb F_q^n)^{s_1}$ given by $f(v, a_1, \ldots, a_{s_1}) = (a_1 - v, \ldots, a_{s_1} - v)$. We define the \emph{weight} of a sequence $\bar u = (u_1, \ldots, u_{s_1}) \in (\mathbb F_q^n)^{s_1}$ to be $w(\bar u) := |f^{-1}(\bar u)|$. Note that the average weight $w_{\mathrm{avg}}$ of a sequence $\bar u \in (\mathbb F_q^n)^{s_1}$ is at least 
    \begin{equation}\label{eq:avg weight KST}
        w_{\mathrm{avg}} := \frac{\left|\mathrm{ind}_W(\mathbb F_q^n \times A^{s_1})\right|}{\left|(\mathbb F_q^n)^{s_1}\right|} \geq \frac{|A|^{s_1}}{10N^{{s_1}-1}} \geq \frac{C^{s_1}N^{1-1/(s_2 \cdots s_{r-1})}}{10} \geq 2m_0
    \end{equation}
    if we choose $C$ large enough as a function of $C'$. Also note that for any $\bar u \in (\mathbb F_q^n)^{s_1}$, we have $|A_{\bar u}| \geq w(\bar u)$. Indeed, for fixed~$\bar u$, any sequence $(v, a_1, \ldots, a_{s_1}) \in f^{-1}(\bar u)$ is determined by $v$ since $a_j = u_j + v$ for all $j \in [s_1]$, and such a sequence has $v \in A_{\bar u}$.
    
    Finally, observe that whenever $w(\bar u) > 0$, it follows that $\bar u$ is linearly independent modulo $W$ since $\bar u$ is then of the form $(a_1 - v, \ldots, a_{s_1} - v)$ for $v, a_1, \ldots, a_{s_1}$ which are linearly independent modulo $W$. This, combined with~\eqref{eq:initial model count}, means that for every $\bar u = (u_1, \ldots, u_{s_1}) \in (\mathbb F_q^n)^{s_1}$, there exist at least $g(w(\bar u))$ models $(u^{(i)}_j)_{i,j}$ for $\Vect_q[K^{(r)}_{s_1, \ldots, s_r}]$ in $A$ which are linearly modulo $W$, with $u^{(1)}_j = u_j$ for all $j \in [s_1]$, where
    \[g(x) := \left\{\begin{array}{c l}
    0 & \text{if $x < m_0$,} \\
    c'\left(x/N\right)^{s_2 \cdots s_{r}}N^{s_2 + \cdots + s_r} - c'\left(m_0/N\right)^{s_2 \cdots s_{r}}N^{s_2 + \cdots + s_r} & \text{if $x \geq m_0$.}
    \end{array}\right.\]
    Therefore, since $g$ is a convex and monotone function, we have by Jensen's inequality and by~\eqref{eq:avg weight KST} that the total number of models for $\Vect_q[K^{(r)}_{s_1, \ldots, s_r}]$ in $A$ which are linearly modulo $W$ is at least
    \begin{align*}
        N^{s_1}g(w_{\mathrm{avg}})
        &= N^{s_1} \left(c'\left(w_{\mathrm{avg}}/N\right)^{s_2 \cdots s_{r}}N^{s_2 + \cdots + s_r} - c'\left(m_0/N\right)^{s_2 \cdots s_{r}}N^{s_2 + \cdots + s_r}\right) \\
        &\geq \frac{1}{2}N^{s_1}\left(c'\left(\frac{|A|^{s_1}}{10N^{s_1}}\right)^{s_2 \cdots s_{r}}N^{s_2 + \cdots + s_r}\right) \\
        &= \frac{c'}{2 \cdot 10^{s_2 \cdots s_r}}\alpha^{s_1 \cdots s_r}N^{s_1 + \cdots + s_r}. \qedhere
    \end{align*}
\end{proof}
We now complete the proof of \Cref{thm:kst vector space} by converting this count of models for $\Vect_q[K^{(r)}_{s_1, \ldots, s_r}]$ into a count of linear copies of $\Vect_q[K^{(r)}_{s_1, \ldots, s_r}]$.

\begin{proof}[Proof of \Cref{thm:kst vector space}]
Let $C, c' > 0$ be as in \Cref{lem:many models} with $d, c$ replaced with $0, c'$, respectively, and let $A \subseteq \mathbb F_q^n$ be of size $|A| = \alpha N \geq CN^{1-1/(s_1 \cdots s_{r-1})}$. Then by \Cref{lem:many models}, there are at least $c'\alpha^{s_1 \cdots s_r}N^{s_1 + \cdots + s_r}$ models for $\Vect_q[K^{(r)}_{s_1, \ldots, s_r}]$ in $A$. This overcounts the number of linear copies of $\Vect_q[K^{(r)}_{s_1, \ldots, s_r}]$ in~$A$, as several models may have the same content. However, if $B$ is a linear copy of $\Vect_q[K^{(r)}_{s_1, \ldots, s_r}]$ in~$A$, then we claim that at most $q^{(s_1 + \cdots + s_r)^2}N^{(s_1 + \cdots + s_r) - \mathrm{rank}(B)}$ models for $\Vect_q[K^{(r)}_{s_1, \ldots, s_r}]$ in~$A$ have content $B$. Indeed, if $(u^{(i)}_j)_{i,j}$ is a model for $\Vect_q[K^{(r)}_{s_1, \ldots, s_r}]$ in $A$, then the vectors $u^{(i)}_j$ form a basis of some $(s_1 + \cdots + s_r)$-dimensional space~$W$ with $\spn(B) \subseteq W \subseteq \mathbb F_q^n$. There are at most $N^{(s_1 + \cdots + s_r)-\mathrm{rank}(B)}$ choices for $W$, and given $W$, there are at most $q^{(s_1 + \cdots + s_r)^2}$ choices for an ordered basis of $W$, giving the desired bound. Altogether, this means that the number of linear copies of $F := \Vect_q[K^{(r)}_{s_1, \ldots, s_r}]$ in $A$ is at least
\[\frac{c'\alpha^{s_1 \cdots s_r}N^{s_1 + \cdots +s_r}}{q^{(s_1 + \cdots + s_r)^2}N^{(s_1 + \cdots + s_r) - \mathrm{rank}(F)}} = \frac{c'}{q^{(s_1 + \cdots + s_r)^2}}\alpha^{|F|}N^{\mathrm{rank}(F)},\]
as desired.
\end{proof}

Next, inspired by the analogous theorem for graphs proven by Alon, Krivelevich, and Sudakov~\cite{alon2003turan}, we prove using the dependent random choice method that the K\H{o}v\'ari--S\'os--Tur\'an bound $\ex_q(n,\Vect_q[H]) = O(N^{1-1/s})$ holds for any bipartite graph $H$ with maximum degree at most $s$ on one side.

\begin{proof}[Proof of \Cref{thm:drc vector space}]
    We may assume that $s \leq v(H)$. Choose $C := 2q^{v(H)}$, and let $A \subseteq \mathbb F_q^n$ be a linear configuration of size $|A| \geq CN^{1-1/s}$. Write $\alpha := |A|/N \geq CN^{-1/s}$, and let $b := q^{v(H)}$. Let $(L,R)$ be a bipartition of $H$ such that every vertex in $R$ has degree at most $s$. Write $L = \{y_1, \ldots, y_\ell\}$ and $R = \{z_1, \ldots, z_r\}$. We wish to find linearly independent vectors $u_1, \ldots, u_\ell, v_1, \ldots, v_r \in \mathbb F_q^n$ such that $u_i + v_j \in A$ for every pair $i \in [\ell]$, $j \in [r]$ with $y_iz_j \in E(H)$.
    
    To accomplish this, we will generate a random set $U \subseteq \mathbb F_q^n$ as follows. Choose $\tilde v_1, \ldots, \tilde v_s \in \mathbb F_q^n$ independently and uniformly at random, and define 
    \[U := \bigcap_{i=1}^s(A - \tilde v_i).\]
    Note that every vector $u \in \mathbb F_q^n$ has probability exactly $(|A|/N)^s = \alpha^s$ of being in $U$, so 
    \begin{equation}\label{eq:expected intersection}
        \mathbb E(|U|) = \alpha^s N.
    \end{equation}
    
    Call an $s$-element subset $S \subseteq \mathbb F_q^n$ \emph{bad} if there are at most $b$ elements $v \in \mathbb F_q^n$ such that $u + v \in A$ for all $u \in S$, or equivalently, such that $S \subseteq A - v$. Because of how we selected the set $U$, we do not expect many bad subsets to be contained in $U$. More precisely, if $S$ is bad, then the probability that $S \subseteq U$ is at most $(b/N)^s$, so the expectation of the number $B_U$ of bad $s$-element subsets of $U$ is at most
    \[\mathbb E(B_U) \leq \binom{N}{s}\left(\frac{b}{N}\right)^s \leq \frac{b^s}{s!}.\]
    By the linearity of expectation, we have
    \[\mathbb E(|U|- B_U) \geq \alpha^sN - \frac{b^s}{s!} \geq C^s - \frac{b^s}{s!} \geq q^{v(H)}.\]
    
    Therefore, there exists a set $U \subseteq \mathbb F_q^n$ for which $|U| - B_U \geq q^{v(H)}$. By deleting from $U$ one element from every bad $s$-element subset, we obtain a set $U' \subseteq U$ of size at least $q^{v(H)} \geq s$ such that no $s$-element subset of $U'$ is bad. Since $\ell \leq v(H)$, we may find $\ell$ linearly independent elements $u_1, \ldots, u_\ell \in U'$. For each $j \in [r]$, define $S_j := \{u_i : y_iz_j \in E(H)\} \subseteq U'$, which has size at most $s$ since $z_j \in R$ has degree at most $s$ in $H$ by assumption. Consequently, since $S_j$ is contained in an $s$-element subset of $U'$ which is not bad, there are more than $b = q^{v(H)}$ elements $v \in \mathbb F_q^n$ such that $u + v \in A$ for every $u \in S_j$. Therefore, for each $j \in [r]$, in turn, we may find $v_j$ outside of $\spn\{u_1, \ldots, u_\ell, v_1, \ldots, v_{j-1}\}$ such that $u + v_j \in A$ for every $u \in S_j$; that is, $u_i + v_j \in A$ whenever $y_iz_j \in E(H)$. By design, $\{u_1, \ldots, u_\ell, v_1, \ldots, v_r\}$ is linearly independent, and so $\{u_i + v_j : y_iz_j \in E(H)\}$ is a linear copy of $\Vect_q[H]$ in $A$.
\end{proof}

\section{Even cycles in graphs}\label{sec:graphs}
\subsection{Proof outline}

We will now outline our main method for proving Theorems~\ref{thm:additively generic integers},~\ref{thm:inseparable and additively generic finite group}, and~\ref{thm:linearly generic}. We do this in the graph setting and use it to give a new proof of the Bondy--Simonovits Theorem (\Cref{thm:bondy-simonovits}) and its supersaturation version (\Cref{thm:simonovits cycle supersaturation}).

Ultimately, we want to find cycles of length $2k$ in $n$-vertex undirected graphs with average degree at least $Cn^{1/k}$. However, it will be convenient for us to consider directed graphs instead. Note that an undirected graph can be viewed as a symmetric digraph by replacing every edge by two antiparallel directed edges, and since every undirected graph of average degree $d$ has a subgraph of minimum degree greater than $d/2$, it follows that the digraph obtained by replacing every edge with a directed $2$-cycle has a subgraph which is \emph{$d'$-out-regular} for some $d' > d/2$; i.e., every vertex has out-degree exactly $d'$. 

Therefore, it suffices to consider an $n$-vertex, $d$-out-regular digraph $D$, with $d \geq Cn^{1/k}$. We will be happy if we are able to find \emph{some} orientation of a cycle of length $2k$ which is abundant in $D$. The particular orientations we will be looking for take the following form.
\begin{definition}
    Let $\ell, k$ be positive integers with $2 \leq \ell \leq k$. A \emph{$(k,\ell)$-tulip} is a directed graph consisting of a \emph{root} vertex $u$, vertices $z_0, \ldots, z_{k-\ell}$, $y_1, \ldots, y_{k-\ell}$ which are pairwise distinct and distinct from $u$, edges $y_iz_{i-1}$ and $y_iz_i$ for all $1 \leq i \leq k-\ell$, and two directed paths of length $\ell$: one path $P$ from $u$ to $z_0$, and one path $P'$ from $u$ to $z_{k-\ell}$, which are internally vertex-disjoint from each other and from all named vertices.
\end{definition}
\begin{figure}[h]
\centering
% Scale the coordinates without shrinking the labels.
\pgfmathsetlengthmacro{\tulipunit}{0.25*\textwidth/8.3}
\begin{tikzpicture}[
  x=\tulipunit,
  y=\tulipunit,
  >={Stealth[length=2.15mm,width=1.55mm]},
  vertex/.style={circle,draw=black,fill=black,inner sep=0pt,
                 minimum size=5.2pt,outer sep=.4pt},
  vertex label/.style={font=\small,inner sep=1.2pt,text=black},
  selected edge/.style={->,draw=pathblue,line width=1.05pt},
  path label/.style={font=\small\bfseries,inner sep=1.2pt,text=pathblue}
]

% Root and the internal vertices of the two length-5 paths.
\node[vertex,minimum size=6.2pt] (u) at (0,-.70) {};
\foreach \i/\xx/\yy in {
  1/1.80/.10,
  2/3.00/1.45,
  3/3.55/3.00,
  4/3.65/4.55
}{
  \node[vertex] (l\i) at (-\xx,\yy) {};
  \node[vertex] (r\i) at ( \xx,\yy) {};
}

% The alternating upper part of the tulip.
\foreach \i/\xx in {0/-3.45,1/-1.725,2/0,3/1.725,4/3.45}{
  \node[vertex] (z\i) at (\xx,6.85) {};
}
\foreach \i/\xx in {1/-2.5875,2/-.8625,3/.8625,4/2.5875}{
  \node[vertex] (y\i) at (\xx,5.59) {};
}

% Rounded directed path P.
\draw[selected edge] (u)  to[out=170,in=-35] (l1);
\draw[selected edge] (l1) to[out=145,in=-70] (l2);
\draw[selected edge] (l2) to[out=110,in=-80] (l3);
\draw[selected edge] (l3) to[out=100,in=-90] (l4);
\draw[selected edge] (l4) to[out=90,in=-98] (z0);

% Rounded directed path P'.
\draw[selected edge] (u)  to[out=10,in=215] (r1);
\draw[selected edge] (r1) to[out=35,in=250] (r2);
\draw[selected edge] (r2) to[out=70,in=260] (r3);
\draw[selected edge] (r3) to[out=80,in=270] (r4);
\draw[selected edge] (r4) to[out=90,in=278] (z4);

% Each y_i sends an edge to z_{i-1} and z_i.
\foreach \i/\j in {1/0,2/1,3/2,4/3}{
  \draw[selected edge] (y\i) -- (z\j);
  \draw[selected edge] (y\i) -- (z\i);
}

% Labels, matching the companion figure.
\foreach \i in {0,...,4}{
  \node[vertex label,above=2.5pt] at (z\i) {$z_{\i}$};
}
\foreach \i in {1,...,4}{
  \node[vertex label,below=2.5pt] at (y\i) {$y_{\i}$};
}
\node[path label,left=5pt,font=\bfseries] at (l3) {$P$};
\node[path label,right=5pt,font=\bfseries] at (r3) {$P'$};
\node[vertex label,below=3pt,font=\small\bfseries] at (u) {$u$};

\end{tikzpicture}
\caption{A $(k,\ell)$-tulip with $k=9$ and $\ell=5$.}
\label{fig:tulip}
\end{figure}

For now, suppose we wish to find just one such subgraph in $D$. The first thing to notice is that from every vertex $u$, there are $d^k \geq C^k n$ (directed) walks of length $k$ starting at $u$. Since the digraph has only $n$ vertices, this means that for a typical pair of vertices $(u,z) \in V(D)^2$, there are at least $C^k$ walks of length $k$ from $u$ to $z$. We will want to keep track of the number of walks of various lengths between pairs of vertices, so we make the following definition.

\begin{definition}
    Let $D$ be a fixed directed graph, and let $\ell$ be a positive integer. The \emph{$\ell$-weight} of a pair of vertices $(u,z) \in V(D)^2$, denoted $w_\ell(u,z)$, is the number of walks of length $\ell$ from $u$ to $z$. If $u \in V(D)$ and $Z \subseteq V(D)$, then we define the \emph{$\ell$-weight} of $Z$ with respect to $u$ as
    \[w_{\ell}(u,Z) := \sum_{z \in Z} w_\ell(u,z).\]
\end{definition}

In this terminology, it is easy to find a pair $(u_0,z_0) \in V(D)^2$ with $k$-weight at least $C^k$, and we can hope that two of these walks are internally vertex-disjoint paths. If so, then we have found a $(k,k)$-tulip in $D$. If not, then it must either be the case that a positive proportion of the walks must pass through the same intermediate vertex, or that a positive proportion of the walks are degenerate (i.e., are not paths). Either case witnesses a large $\ell$-weight of some pair $(u_1,z_1) \in V(D)^2$ for some $\ell < k$. If this failure happens for every pair $(u_0,z_0)$ of large $k$-weight, then a careful counting argument shows that there exist $\ell < k$, a vertex $u$, and a small set $Z \subseteq V(D)$ whose $\ell$-weight with respect to $u$ is $\Omega(d^\ell)$. We will consider $\ell$ as small as possible with this property.

For the sake of illustration, suppose that this happens for $\ell = 2$; i.e., there are $\Omega(d^2)$ walks of length $2$ from $u$ to this small set $Z$. To be precise, suppose that $|Z| \leq \eps d^2$ and $w_2(u,Z) \geq \gamma d^2$, where $\eps$ and $\gamma$ are constants satisfying $\gamma \geq 4k \eps$, and $d$ is sufficiently large. The number of edges from $N^+(u)$ to $Z$ is precisely $w_2(u,Z) \geq \gamma d^2$, so we can find nonempty subsets $L \subseteq N^+(u)$ and $R \subseteq Z$ such that every vertex in $L$ has out-degree at least $\gamma d/2 \geq 2k$ into the set $R$, and every vertex in $R$ has in-degree at least $(\gamma/\eps)/2 \geq 2k$ from the set $L$.

Now we may find a $(k,2)$-tulip in $D$ as follows (see \Cref{fig:tulip embedding l=2}). We start at $u$, then find a directed path $P$ of length $2$ from $u$ to some vertex $z_0 \in R$. This can be done, for instance, by choosing any vertex $y \in L$ (which is an out-neighbor of $u$ since $L \subseteq N^+(u)$), then choosing an out-neighbor $z_0$ of $y$ in $R$. Next, we  iteratively pick, for each $1 \leq i \leq k-2$, an in-neighbor $y_i$ of $z_{i-1}$ in $L$, then an out-neighbor $z_i$ of $y_i$ in $R$, both distinct from all previously chosen vertices. Finally, we can complete the cycle of length $2k$ by finding a path $P'$ of length $2$ from $u$ to $z_{k-2}$ which is internally vertex-disjoint from all previously chosen vertices. This last step is possible because there is only one internal vertex to choose, which can be chosen from those in-neighbors of $z_{k-2}$ in $L$ (of which we assumed there were at least $2k$).

\begin{figure}[h]
\centering
\resizebox{0.65\textwidth}{!}{
\begin{tikzpicture}[
  x=1cm,
  y=1cm,
  >={Stealth[length=2.15mm,width=1.55mm]},
  vertex/.style={circle,draw=black,fill=black,inner sep=0pt,minimum size=5.2pt},
  vertex label/.style={font=\large\bfseries,inner sep=1.2pt,text=black},
  selected edge/.style={->,draw=pathblue,line width=1.05pt},
  fan edge/.style={draw=setorange!65,line width=.68pt,opacity=.72},
  set label/.style={font=\bfseries,inner xsep=1pt,inner ysep=.6pt}
]

% The nested regions are drawn first, so all arrows and labels remain legible.
\begin{scope}[on background layer]
  \path[rounded corners=5pt,fill=setblue!5,draw=setblue!65,line width=.85pt]
    (-6.05,2.12) rectangle (6.05,4.08);
  \path[rounded corners=4pt,fill=setblue!17,draw=setblue,line width=1.05pt]
    (-5.45,2.48) rectangle (5.45,3.62);

  \path[rounded corners=5pt,fill=setorange!5,draw=setorange!65,line width=.85pt]
    (-6.05,-.18) rectangle (6.05,1.82);
  \path[rounded corners=4pt,fill=setorange!17,draw=setorange,line width=1.05pt]
    (-5.45,.16) rectangle (5.45,1.28);
\end{scope}

% Set labels make the two containments visually explicit.
\node[set label,text=setblue,anchor=north west] at (-5.97,4.02) {$Z$};
\node[set label,text=setblue,anchor=north west] at (-5.35,3.55) {$R$};
\node[set label,text=setorange,anchor=north west] at (-5.97,1.74) {$N^+(u)$};
\node[set label,text=setorange,anchor=north west] at (-4.88,1.15) {$L$};

% Coordinates for the vertices in R.
\coordinate (z0)   at (-4.50,3.02);
\coordinate (z1)   at (-2.70,3.02);
\coordinate (z2)   at (-0.90,3.02);
\coordinate (zkm3) at ( 2.70,3.02);
\coordinate (zkm2) at ( 4.50,3.02);

% Coordinates for the vertices in L.
\coordinate (x)    at (-5.00,.73);
\coordinate (y1)   at (-3.60,.73);
\coordinate (y2)   at (-1.80,.73);
\coordinate (ykm2) at ( 3.60,.73);
\coordinate (xp)   at ( 5.00,.73);
\coordinate (u)    at ( 0.00,-1.35);

% A single shaded fan represents all edges from u to its out-neighborhood.
% Its top edge meets the whole N^+(u) region rather than individual vertices.
\begin{scope}[on background layer]
  \path[fill=setorange!12]
    (u)
    .. controls (-1.75,-.96) and (-4.25,-.42) .. (-5.78,-.19)
    -- (5.78,-.19)
    .. controls (4.25,-.42) and (1.75,-.96) .. (u)
    -- cycle;
  \draw[fan edge]
    (u) .. controls (-1.75,-.96) and (-4.25,-.42) .. (-5.78,-.19);
  \draw[fan edge]
    (u) .. controls (1.75,-.96) and (4.25,-.42) .. (5.78,-.19);
\end{scope}

% A shaded curtain represents the ambient high-degree directed graph L -> R.
\begin{scope}[on background layer]
  \path[shade,bottom color=bridgepurple!17,
        top color=bridgepurple!25,opacity=.90]
    (-5.36,1.29) rectangle (5.36,2.47);
\end{scope}
\node[font=\footnotesize\bfseries,text=bridgepurple!85!black] at (.90,2.00)
  {$D[L,R]$};

% First and last length-2 paths.
\draw[selected edge] (u) to[out=184,in=-76]
  node[pos=.58,below left=1pt,font=\bfseries,text=pathblue] {$P$} (x);
\draw[selected edge] (x) -- (z0);
\draw[selected edge] (u) to[out=-4,in=-104]
  node[pos=.58,below right=1pt,font=\bfseries,text=pathblue] {$P'$} (xp);
\draw[selected edge] (xp) -- (zkm2);

% Greedy links: y_i is an in-neighbor of z_{i-1} and an out-neighbor of z_i.
\draw[selected edge] (y1) -- (z0);
\draw[selected edge] (y1) -- (z1);
\draw[selected edge] (y2) -- (z1);
\draw[selected edge] (y2) -- (z2);
\draw[selected edge] (ykm2) -- (zkm3);
\draw[selected edge] (ykm2) -- (zkm2);

% Vertices and labels in R.
\foreach \p/\lab in {
  z0/{z_0},z1/{z_1},z2/{z_2},zkm3/{z_{k-3}},zkm2/{z_{k-2}}
}{
  \node[vertex] at (\p) {};
  \node[vertex label,above=2.5pt] at (\p) {$\lab$};
}
\node[font=\large,text=softgray] at (.92,3.02) {$\cdots$};

% Vertices and labels in L, including the internal vertices of P and P'.
\foreach \p/\lab in {
  y1/{y_1},y2/{y_2},ykm2/{y_{k-2}}
}{
  \node[vertex] at (\p) {};
  \node[vertex label,below=2.5pt] at (\p) {$\lab$};
}
\node[vertex] at (x) {};
\node[vertex label,left=3pt] at (x) {$y$};
\node[vertex] at (xp) {};
\node[vertex label,right=3pt] at (xp) {$y'$};
\node[font=\large,text=softgray] at (.90,.73) {$\cdots$};

% Root.
\node[vertex,minimum size=6.2pt] at (u) {};
\node[vertex label,below=3pt,font=\small\bfseries] at (u) {$u$};

% \node[anchor=north,align=center,text width=12cm,font=\small] at (0,-1.98) {A greedy embedding of a $(k,2)$-tulip rooted at $u$, through
%   $L\subseteq N^+(u)$ and $R\subseteq Z$.};

\end{tikzpicture}
}

\caption{A greedy embedding of a $(k,2)$-tulip rooted at $u$ in $D$ through the special sets $L \subseteq N^+(u)$ and $R \subseteq Z$ in the case $\ell=2$.
}\label{fig:tulip embedding l=2}
\end{figure}

We can carry out a similar strategy if $\ell > 2$ (see \Cref{fig:tulip embedding} below), except that we need to guarantee additional structure in order to perform the first and last steps, where we wish to find paths of length $\ell$ from $u$ to $R$ which avoid vertex collisions. In general, what we want to do is find a pair of subsets $L, R \subseteq V(D)$ such that every vertex in $L$ has large out-degree into $R$, every vertex in $R$ has large in-degree from $L$, and every vertex in $R$ can be \emph{robustly} reached from $u$ by walks of length $\ell$, so that we can close up the cycle in the last step without reusing vertices. The following definition makes precise the type of robustness that we will require, and the main lemma which finds these structures is proved at the beginning of the next subsection as \Cref{lem:robust bipartite graph for digraph}.

\begin{definition}\label{def:robust graph}
    Let $D$ be a fixed digraph, let $\ell$ be a positive integer, and let $\delta > 0$. A pair of vertices $(u,z)$ in $D$ is called \emph{$(\delta, \ell)$-robust} with respect to $D$ if the following hold:
    \stepcounter{propcounter}
\begin{enumerate}[label = {{{\normalfont\textbf{\Alph{propcounter}\arabic{enumi}}}}}]
    \item \label{prop:robust vertex} For every vertex $y$ and every $1 \leq t \leq \ell-1$, less than a $\delta$-proportion of walks of length $\ell$ from $u$ to $z$ have $y$ as the $t$-th vertex; that is,
    \[w_t(u,y)w_{\ell-t}(y,z) < \delta w_\ell(u,z).\]
    \item \label{prop:robust paths} Less than a $\delta$-proportion of walks of length $\ell$ from $u$ to $z$ are degenerate; that is, there are more than $(1-\delta)w_\ell(u,z)$ directed paths of length $\ell$ from $u$ to $z$.
\end{enumerate}
\end{definition}

\subsection{The \texorpdfstring{Bondy--Simonovits}{Bondy-Simonovits} Theorem}\label{sec:bondy proof}
We now illustrate our method in full detail by proving the Bondy--Simonovits Theorem (\Cref{thm:bondy-simonovits}) and its supersaturation version (\Cref{thm:simonovits cycle supersaturation}).

We first state and prove our main lemma which finds the special integer $\ell$, this special vertex $u$, and these special sets $L$ and $R$ as described in the previous subsection.

\begin{lemma}\label{lem:robust bipartite graph for digraph}
    For every $\delta > 0$ and integers $b \geq 1$ and $k \geq 2$, there exist positive constants $C = C(k, b, \delta)$ and $\beta = \beta(k, b, \delta)$ such that the following holds for every $n$-vertex, $d$-out-regular directed graph $D$ with $d \geq Cn^{1/k}$. There exist a vertex $u$, an integer $\ell$ with $2 \leq \ell \leq k$ and two subsets $L, R \subseteq V(D)$ satisfying:
    \stepcounter{propcounter}
    \begin{enumerate}[label = {{{\normalfont\textbf{\Alph{propcounter}\arabic{enumi}}}}}]
        \item\label{prop:right robust graph} For every element $z \in R$, the pair $(u,z)$ is $(\delta, \ell)$-robust with respect to $D$.
        \item\label{prop:weight count graph} $w_{\ell}(u,R) \geq \beta d^{\ell}$.
        \item\label{prop:right degree graph} Every vertex $z \in R$ has at least $b$ in-neighbors in $L$.
        \item\label{prop:left degree graph} Every vertex $x \in L$ has at least $\beta d$ out-neighbors  in $R$.
    \end{enumerate}
\end{lemma}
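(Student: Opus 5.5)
We follow the outline of the previous subsection: choose the smallest $\ell$ for which some vertex sends a constant fraction of its length-$\ell$ walks into a small set, show that minimality forces robustness, and then extract $L$ and $R$ by iterated pruning. Fix constants $1 \gg \gamma_k \gg \eps_k \gg \gamma_{k-1} \gg \eps_{k-1} \gg \cdots \gg \gamma_2 \gg \eps_2 > 0$, all depending on $k,b,\delta$. For $2 \le \ell \le k$, say that the pair $(u,Z)$ is \emph{$\ell$-heavy} if $|Z| \le \eps_\ell d^\ell$ and $w_\ell(u,Z) \ge \gamma_\ell d^\ell$.

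\textbf{Step 1: some heavy pair exists.} There are $nd^k$ walks of length $k$ in total, and $d^k \ge C^k n$. Hence, for any fixed $u$, the set $Z$ of the $\lfloor \eps_k d^k\rfloor \geq \eps_k C^k n \ge n$ heaviest endpoints is all of $V(D)$. So $(u,V(D))$ is $k$-heavy once $C$ is large.

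\textbf{Step 2: minimality gives robustness.} Take $\ell$ minimal such that an $\ell$-heavy pair $(u,Z)$ exists. Let $R_0 \subseteq Z$ consist of those $z$ with $(u,z)$ $(\delta,\ell)$-robust. We claim $w_\ell(u,Z\setminus R_0) \le \gamma_\ell d^\ell/2$.

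Suppose not. Then the non-robust $z$ carry at least $\gamma_\ell d^\ell/2$ walks, and each such $z$ fails either \ref{prop:robust vertex} or \ref{prop:robust paths}.

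If $z$ fails \ref{prop:robust vertex}, some $y$ and some $t \in [1,\ell-1]$ satisfy $w_t(u,y)\,w_{\ell-t}(y,z) \ge \delta w_\ell(u,z)$. Pigeonholing on $t$, a $\delta/\ell$-fraction of the weight sits on "heavy intermediate vertices". The set $Y$ of $y$ with $w_t(u,y) \ge \eta d^t$ has size at most $\eta^{-1}$. If $t \ge 2$ and the weight is concentrated on such $y$ with $w_t(u,y)$ large, then $(u,\{y\})$, or a small union of such $y$, is $t$-heavy with $t<\ell$, contradicting minimality. Otherwise $w_t(u,y)$ is small, so $w_{\ell-t}(y,z)$ is large relative to $d^{\ell-t}$. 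Averaging over $y$, some $y$ then has $(y,Z')$ $(\ell-t)$-heavy with $\ell - t < \ell$, again contradicting minimality.

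If $z$ fails \ref{prop:robust paths}, a degenerate walk revisits a vertex. Cutting it at the repeated vertex exhibits concentration of $t$-walks from some vertex onto itself or onto a bounded set, and the same argument applies. The case $t=1$ is handled because a single vertex $y$ carries at most $d^{\ell-1}$ walks, which is a $\gamma$-negligible fraction unless the corresponding set is small.

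This bookkeeping is the main obstacle. The delicate point is arranging the hierarchy of $\eps$ and $\gamma$ so that each kind of failure yields a strictly shorter heavy pair with admissible parameters. I would first handle the two failure types separately, and then use the fact that walks of length $\ell$ into a set of size $\eps_\ell d^\ell$ cannot all funnel through few vertices without creating an $(\ell-t)$-heavy pair.

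\textbf{Step 3: pruning to $L$ and $R$.} Put $R_1 = R_0$, so $w_\ell(u,R_1) \ge \gamma_\ell d^\ell/2$. Decompose $w_\ell(u,z) = \sum_x w_{\ell-1}(u,x)\,\mathbf 1[xz \in E]$, and define the weighted bipartite graph from $X = V(D)$ to $R_1$ in which $x$ carries weight $w_{\ell-1}(u,x) \le d^{\ell-1}$.

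Repeatedly delete any $x$ with fewer than $\beta d$ out-neighbours in the current $R$, and any $z \in R$ with fewer than $b$ in-neighbours in the current $L$.
\begin{itemize}
\item Deleting the $x$'s loses total weight at most $\beta d \cdot \sum_x w_{\ell-1}(u,x) \le \beta d^\ell$.
\item Deleting a $z$ loses weight at most $b\,d^{\ell-1}$ per $z$. Only $|R_1| \le \eps_\ell d^\ell$ vertices $z$ are available, but we can bound the loss more sharply. A vertex $z$ with fewer than $b$ surviving in-neighbours has $w_\ell$-weight at most $b \max_x w_{\ell-1}(u,x)$. By \ref{prop:robust vertex} with $t=\ell-1$, this is less than $b\delta\, w_\ell(u,z)$ coming from any single $x$, so the loss is at most a $b\delta$-fraction.
\end{itemize}
Choosing $\delta$ small relative to $b$ is not permitted, since $\delta$ is given. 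So instead I would use minimality once more: if much weight sits on vertices $x$ with $w_{\ell-1}(u,x) \ge \eta d^{\ell-1}$, these form a set of size at most $\eta^{-1}$ and give an $(\ell-1)$-heavy pair, which is impossible. Hence each deleted $z$ loses little weight.

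With $\beta \ll \gamma_\ell$, the surviving weight is at least $\beta d^\ell$, giving \ref{prop:weight count graph}. The pruning itself guarantees \ref{prop:right degree graph} and \ref{prop:left degree graph}, and \ref{prop:right robust graph} holds because $R \subseteq R_0$.
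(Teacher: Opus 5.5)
There is a genuine gap in Step 2, which is the heart of the lemma. You split according to whether $w_t(u,y)\ge\eta d^t$. But in a simple digraph $w_t(u,y)\le d^{t-1}$, so for large $d$ no vertex is heavy in this sense, and everything lands in your second case. There you want $(y,Z')$ to be $(\ell-t)$-heavy, where $Z'$ is the set of $z$ whose witness is $y$. However, $Z'$ can have size up to $|Z|\le\eps_\ell d^\ell$, far above the admissible $\eps_{\ell-t}d^{\ell-t}$, so minimality gives nothing.

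The missing idea is to split by the size of the fibers $B_{t,y}=\{z:(t_z,y_z)=(t,y)\}$, which are \emph{disjoint} subsets of $Z$.
\begin{itemize}
\item Small fibers ($|B_{t,y}|\le\eps_{\ell-t}d^{\ell-t}$) satisfy $w_{\ell-t}(y,B_{t,y})<\gamma_{\ell-t}d^{\ell-t}$ by minimality at level $\ell-t$, and $\sum_y w_t(u,y)\le d^t$.
\item Large fibers occur for at most $\eps_\ell d^\ell/(\eps_{\ell-t}d^{\ell-t})$ vertices $y$, by disjointness. If $\eps_\ell\le\eps_t\eps_{\ell-t}$, this is at most $\eps_t d^t$, so minimality at level $t$ bounds their total $t$-weight by $\gamma_t d^t$, while trivially $w_{\ell-t}(y,B_{t,y})\le d^{\ell-t}$.
\end{itemize}
Altogether, the $z$ failing \ref{prop:robust vertex} carry weight at most $\delta^{-1}\sum_{t=1}^{\ell-1}(\gamma_t+\gamma_{\ell-t})d^\ell$, which is small once $\gamma_j\ll\delta\gamma_\ell$ for $j<\ell$.

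The condition $\eps_\ell\le\eps_t\eps_{\ell-t}$ forces $\eps_\ell$ to \emph{decrease} geometrically in $\ell$. Your interleaved hierarchy $\eps_k\gg\gamma_{k-1}\gg\eps_{k-1}\gg\cdots$ has $\eps_\ell/\eps_{\ell-t}\gg 1$, which makes the large-fiber count worse than the trivial bound $d^t$; this step cannot be carried out with it. The paper takes $\gamma_t=(\delta/16)^{k-t}$ and $\eps_t=(\gamma_1/(6b))^t$, so every $\gamma$ exceeds every $\eps$. Degenerate walks need no recursion: at most $\ell^2d^{\ell-1}$ walks of length $\ell$ from $u$ are degenerate, so the $z$ failing \ref{prop:robust paths} carry weight at most $\delta^{-1}\ell^2d^{\ell-1}=o(d^\ell)$.

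In Step 3, the bound you discarded is correct, and your reason for discarding it is not. Since $(\delta',\ell)$-robustness implies $(\delta,\ell)$-robustness for $\delta'\le\delta$, and $C,\beta$ may depend on $b$ and $\delta$, you can run the whole argument with $\delta':=\min\{\delta,1/(2b)\}$. Then:
\begin{itemize}
\item by \ref{prop:robust vertex} with $t=\ell-1$, each deleted $z$ costs less than $b\delta' w_\ell(u,z)$, so the $z$-deletions cost at most $\frac12 w_\ell(u,R_0)$;
\item the $x$-deletions cost at most $\beta d^\ell$;
\item taking $\beta\le\gamma_\ell/8$ finishes.
\end{itemize}
This is a legitimate and somewhat cleaner alternative to the paper's route. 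The paper instead uses minimality at level $\ell-1$ to discard the $x$ with $w_{\ell-1}(u,x)\ge 2\gamma_{\ell-1}/\eps_{\ell-1}$. It then converts the remaining weight into an unweighted edge count $e(\hat H)\ge 3b\eps_\ell d^\ell$ and prunes edges using $|\hat R|\le\eps_\ell d^\ell$.

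The fallback you actually adopt repeats the scale error from Step 2. No $x$ has $w_{\ell-1}(u,x)\ge\eta d^{\ell-1}$, and a per-vertex loss of up to $b\eta d^{\ell-1}$, summed over as many as $\eps_\ell d^\ell$ deleted vertices, is not small. The right threshold for a ``heavy'' $x$ is a constant.
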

\begin{proof}
We may assume $\delta \leq 1$. Define parameters $1 = \gamma_k > \cdots > \gamma_1 > \eps_1 > \cdots > \eps_k$ by $\gamma_t := (\delta/16)^{k-t}$ and $\eps_t = (\gamma_1/(6b))^{t}$ for all $1 \leq t \leq k$. Define $C := \eps_k^{-1/k}$.  Let $D$ be an $n$-vertex, $d$-out-regular digraph with $d \geq Cn^{1/k}$. By increasing the value of~$C$ if necessary, we may assume $n$ is sufficiently large. Let $\ell \in [k]$ be minimum with the following property: 
    \[\label{eq:star graph}\text{There exist $u \in V(D)$ and $Z \subseteq V(D)$ of size $|Z| \leq \eps_\ell d^{\ell} $ with $\ell$-weight $w_{\ell}(u,Z) \geq \gamma_\ell d^\ell$.}\tag{$*$}\]
Such an $\ell$ exists since $\eps_k d^k \geq \eps_k C^k n = |V(D)|$ and $w_k(u,V(D)) = d^k = \gamma_k d^k$ for any $u \in V(D)$.
On the other hand, we must have $\ell \geq 2$
because $\eps_1 < \gamma_1$, and $w_1(u,Z) \leq |Z|$ for any $u \in V(D)$ and $Z \subseteq V(D)$.
Let $u \in V(D)$ and $Z \subseteq V(D)$ be the vertex and vertex set guaranteed by~\eqref{eq:star graph} for this choice of $\ell$.

    Let $\hat R$ be the set of vertices $z \in Z$ such that $(u,z)$ is $(\delta, \ell)$-robust with respect to $D$. Our first goal is to show that at least half of the assumed $\ell$-weight of $Z$ with respect to $u$ comes from $\hat R$.
    \begin{claim}\label{cla:robust weight digraph}
        $w_{\ell}(u, \hat R) \geq (\gamma_\ell/2)d^{\ell}$.
    \end{claim}
    \begin{subproof}
        Let $B_0$ be the set of vertices $z \in Z$ such that there are at least $\delta w_{\ell}(u,z)$ degenerate walks from $u$ to $z$ of length $\ell$. Since the total number of degenerate walks of length $\ell$ in $D$ beginning at $u$ is at most $\ell^2 d^{\ell-1}$, we have that
        \begin{equation}\label{eq:degenerate weight}
            w_\ell(u,B_0) \leq \delta^{-1}\ell^2d^{\ell-1} \leq \frac{\gamma_\ell}{4}d^\ell.
        \end{equation}
        Here we used that $d \geq Cn^{1/k}$ and $n$ is sufficiently large.
        
        Note that for every vertex $z \in Z \setminus (\hat R \cup B_0)$, by \Cref{def:robust graph}, there exist an integer $t = t_z \in [\ell-1]$ and a vertex $y = y_z$ such that $w_t(u,y)w_{\ell-t}(y,z) \geq \delta w_\ell(u,z)$. If more than one such pair $(t,y)$ exists, then we may designate one such pair as $(t_z, y_z)$ arbitrarily.
    Now, for each pair $(t,y) \in [\ell-1] \times V(D)$, define
    \[B_{t,y} := \{z \in Z \setminus (\hat R \cup B_0) : (t_z,y_z) = (t,y)\}.\]
    Further define, for each $t \in [\ell-1]$, the sets
    \begin{align*}
        Y_{t, \mathrm{small}} &:= \left\{y \in V(D) : |B_{t,y}| \leq \eps_{\ell-t}d^{\ell-t}\right\}, \\
        Y_{t, \mathrm{large}} &:= \left\{y \in V(D) : |B_{t,y}| > \eps_{\ell-t}d^{\ell-t}\right\}.
    \end{align*}
    Then we have
    \begin{align}
        w_\ell(u,Z \setminus (\hat R \cup B_0)) &= \sum_{t=1}^{\ell-1} \sum_{y \in V(D)}\sum_{z \in B_{t,y}}w_\ell(u,z) \nonumber \\
        &\leq \sum_{t=1}^{\ell-1} \sum_{y \in V(D)}\sum_{z \in B_{t,y}}\delta^{-1} w_t(u,y)w_{\ell-t}(y,z) \nonumber \\
        &= \sum_{t=1}^{\ell-1} \delta^{-1}\left(\sum_{y \in Y_{t,\mathrm{small}}}w_t(u,y)w_{\ell-t}(y,B_{t,y}) + \sum_{y \in Y_{t,\mathrm{large}}}w_t(u,y)w_{\ell-t}(y,B_{t,y})\right). \label{eq:organizing weight graph}
    \end{align}
    Observe that $w_{\ell-t}(y,B_{t,y}) \leq d^{\ell-t}$ for every $t \in [\ell-1]$ and $y \in V(D)$, and if $y \in Y_{t,\mathrm{small}}$, then it follows from our choice of $\ell$ as the minimum satisfying~\eqref{eq:star graph} that $w_{\ell-t}(y,B_{t,y}) \leq \gamma_{\ell-t}d^{\ell-t}$. Thus, by~\eqref{eq:organizing weight graph},
    \begin{align}
        w_\ell(u,Z \setminus (\hat R \cup B_0)) 
        &\leq \sum_{t=1}^{\ell-1} \delta^{-1}\left(w_t(u,Y_{t,\mathrm{small}})(\gamma_{\ell-t}d^{\ell-t}) + w_t(u,Y_{t,\mathrm{large}})d^{\ell-t}\right).\label{eq:weight step 2 graph}
    \end{align}
    For each $t \in [\ell-1]$, because the sets $B_{t,y}$ for $y \in Y_{t,\mathrm{large}}$ are disjoint subsets of $Z$ of size at least $\eps_{\ell-t}d^{\ell-t}$, we have that 
    \[|Y_{t, \mathrm{large}}| \leq \frac{|Z|}{\eps_{\ell-t} d^{\ell-t}} \leq \frac{\eps_\ell d^\ell}{\eps_{\ell-t}d^{\ell-t}} = \eps_t d^t\]
    by~\eqref{eq:star graph}, so by the minimality of $\ell$, we have $w_t(u,Y_{t,\mathrm{large}}) \leq \gamma_td^t$.  We also trivially have $w_t(u,Y_{t,\mathrm{small}}) \leq d^t$, so by~\eqref{eq:weight step 2 graph}, we have
    \begin{align*}
        w_\ell(u,Z \setminus (\hat R \cup B_0)) &\leq \sum_{t=1}^{\ell-1} \delta^{-1}\left(\gamma_{\ell-t}d^\ell + \gamma_td^{\ell}\right) \leq \frac{\gamma_\ell}{4} d^{\ell}.
    \end{align*}
    Combining this with~\eqref{eq:degenerate weight} and~\eqref{eq:star graph} now gives
    \[w_{\ell}(u,\hat R) = w_\ell(u,Z) - w_\ell(u,B_0) - w_{\ell}(u,Z \setminus (\hat R \cup B_0)) \geq \frac{\gamma_\ell}{2}d^{\ell},\]
    as claimed.
    \end{subproof}
    Now let $\hat L$ be the set of vertices $y \in V(D)$ with positive $(\ell-1)$-weight, and note that $|\hat L| \leq d^{\ell-1}$. Also recall that $|\hat R| \leq |Z| \leq \eps_\ell d^{\ell}$ by~\eqref{eq:star graph}. We construct an auxiliary bipartite graph $\hat H$ on $(\hat L, \hat R)$ by joining $y \in \hat L$ to $z \in \hat R$ whenever $yz \in E(D)$. Here, we treat $\hat L$ and $\hat R$ as disjoint sets in $\hat H$, even though they may not be disjoint in $D$. We want to give a lower bound on the number of edges of $\hat H$. To that end, let $Y := \{y \in \hat L : w_{\ell-1}(u,y) \geq 2\gamma_{\ell-1}/\eps_{\ell-1}\}$, and note that $Y$ cannot be larger than $\eps_{\ell-1}d^{\ell-1}$ by our choice of~$\ell$ as the smallest satisfying~\eqref{eq:star graph}. Consequently, $w_{\ell-1}(u,Y) \leq \gamma_{\ell-1}d^{\ell-1}$, also by our choice of $\ell$. Therefore, since every vertex in $\hat L$ has degree at most $d$ in $\hat H$, we have
    \begin{align*}
        w_{\ell}(u,\hat R) &= \sum_{y \in \hat L} w_{\ell-1}(u,y)\deg_{\hat H}(y) \\
        &\leq w_{\ell-1}(u,Y)d + \sum_{y \in \hat L \setminus Y} \frac{2\gamma_{\ell-1}}{\eps_{\ell-1}}\deg_{\hat H}(y) \\
        &\leq \gamma_{\ell-1}d^{\ell} + \frac{2\gamma_{\ell-1}}{\eps_{\ell-1}}e(\hat H).
    \end{align*}
    
    Rearranging and applying \Cref{cla:robust weight digraph}, we have
    \begin{equation}\label{eq:edges in first graph digraph}
        e(\hat H) \geq \frac{\eps_{\ell-1}w_{\ell}(u,\hat R)}{2\gamma_{\ell-1}} - \frac{\eps_{\ell-1}}{2}d^\ell \geq \left(\frac{\eps_{\ell-1}\gamma_{\ell}}{4\gamma_{\ell-1}} - \frac{\eps_{\ell-1}}{2}\right)d^\ell \geq 3b\eps_{\ell}d^\ell.
    \end{equation}
    Here we used that $\gamma_{\ell-1} \leq \gamma_{\ell}/4$ and $\eps_{\ell} \leq \eps_{\ell-1}/(6b)$.

    Next, we iteratively delete from $\hat H$ any vertex in $\hat L$ with degree less than $b\eps_{\ell}d$ and any vertex in $\hat R$ with degree less than $b$, as long as possible. Let $H$ be the resulting graph, and observe that the number of edges deleted is less than 
    \[b \eps_\ell d|\hat L| + b|\hat R| \leq  2b\eps_{\ell}d^{\ell},\]
    so $e(H) \geq b \eps_{\ell}d^{\ell}$ by~\eqref{eq:edges in first graph digraph}. In particular, $H$ is nonempty, every vertex in $L := \hat L \cap V(H)$ has degree at least~$b\eps_{\ell}d$, and every vertex in $R := \hat R \cap V(H)$ has degree at least $b$. Moreover, because every vertex in $L$ has positive $(\ell-1)$-weight, it follows that
    \[w_{\ell}(u,R) \geq \sum_{y \in L}w_{\ell-1}(u,y)\deg_H(y) \geq e(H) \geq b\eps_{\ell} d^{\ell}.\]
    Lastly, for every element $z \in R \subseteq \hat R$, the pair $(u,z)$ is $(\delta, \ell)$-robust by the definition of $\hat R$.
    Thus~\ref{prop:right robust graph}--\ref{prop:left degree graph} are satisfied with~$\beta := b\eps_k$.
\end{proof}

We now use the previous lemma to find not only one, but $\Omega(d^k)$ oriented cycles of length $2k$ in an $n$-vertex, $d$-out-regular digraph with $d \geq Cn^{1/k}$. Specifically, we will find $\Omega(d^k)$ many $(k,\ell)$-tulips rooted at a common vertex $u$, for some $2 \leq \ell \leq k$. Since each such tulip uses two out-edges from $u$, this will allow us to delete a small set of edges and iterate in order to increase this count even further.
\begin{lemma}\label{lem:many oriented cycles}
    For every $k \geq 2$, there exist constants $C, c > 0$ such that every $n$-vertex, $d$-out-regular directed graph $D$ with $d \geq Cn^{1/k}$ contains a vertex $u$ and at least $cd^k$ many $(k,\ell)$-tulips rooted at $u$, for some $2 \leq \ell \leq k$.
\end{lemma}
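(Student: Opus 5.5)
We apply \Cref{lem:robust bipartite graph for digraph} with $\delta$ a small constant depending on $k$ (say $\delta = 1/(16k^2)$) and $b := 4k$ (or any constant large compared to $k$). This gives a vertex $u$, an integer $2 \leq \ell \leq k$, and sets $L, R$ satisfying \ref{prop:right robust graph}--\ref{prop:left degree graph}. We then count $(k,\ell)$-tulips rooted at $u$ greedily, in the order shown in the figures: first a path $P$, then the zigzag $z_0, y_1, z_1, \ldots, y_{k-\ell}, z_{k-\ell}$, and finally a path $P'$. Each step either contributes a multiplicative factor to the count or is simply guaranteed to be possible.

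\emph{The path $P$.} We choose a directed path $P$ of length $\ell$ from $u$ to some $z_0 \in R$. By \ref{prop:weight count graph} and \ref{prop:robust paths}, there are at least $(1-\delta)w_\ell(u,R) \geq \beta d^\ell/2$ such paths.

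\emph{The zigzag.} For $i = 1, \ldots, k-\ell$ we choose $y_i \in L$, an in-neighbor of $z_{i-1}$, avoiding the at most $2k$ previously chosen vertices. This is possible by \ref{prop:right degree graph} since $b > 2k$, although it yields only a factor of $1$. We then choose $z_i \in R$, an out-neighbor of $y_i$, avoiding previous vertices. By \ref{prop:left degree graph} there are at least $\beta d - 2k \geq \beta d/2$ choices. Altogether this gives at least $(\beta/2)^{k-\ell+1} d^{k}$ partial structures.

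\emph{Closing with $P'$.} This is the main obstacle: we need a path $P'$ of length $\ell$ from $u$ to $z_{k-\ell}$ that is internally disjoint from the set $S$ of at most $2k$ already chosen vertices. Robustness handles this. By \ref{prop:robust paths}, more than $(1-\delta)w_\ell(u,z_{k-\ell})$ of the length-$\ell$ walks are paths. By \ref{prop:robust vertex}, for each $s \in S$ and each position $t \in [\ell-1]$, fewer than $\delta w_\ell(u,z_{k-\ell})$ walks pass through $s$ at position $t$. Hence at most $2k\ell\delta\, w_\ell(u,z_{k-\ell})$ walks hit $S$ internally. With $\delta < 1/(4k^2)$, at least half of the walks remain, and this is positive because $w_\ell(u,z) \geq 1$ for $z \in R$. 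So at least one valid $P'$ exists, and every partial structure extends to a $(k,\ell)$-tulip.

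Finally we handle overcounting. Different choice sequences yield distinct tulips, since the tulip determines $P$, the $y_i$ and the $z_i$. Choices whose vertices collide with earlier ones were excluded along the way: at each step we avoided previously chosen vertices, and $P$ is a path. This gives at least $cd^k$ tulips with $c = (\beta/2)^{k}/2$, using $d^\ell \cdot d^{k-\ell} = d^k$. The constant $C$ is the one from \Cref{lem:robust bipartite graph for digraph}, possibly enlarged so that $\beta d/2 \geq 2k$.
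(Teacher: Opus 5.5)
Your argument is correct and is essentially the paper's proof: you apply \Cref{lem:robust bipartite graph for digraph}, run the same three-step greedy embedding of $P$, then the zigzag, then $P'$, and use \ref{prop:robust vertex} and \ref{prop:robust paths} to close up the cycle. One correction: a tulip does not determine the ordered data $(P, y_1, z_1, \ldots, y_{k-\ell}, z_{k-\ell})$, since reversing the zigzag and swapping the roles of $P$ and $P'$ (simply swapping $P$ and $P'$ when $\ell = k$) gives the same subgraph, so each tulip arises from up to two choice sequences, as the paper notes; the factor $1/2$ in your $c = (\beta/2)^k/2$ already absorbs this, so your conclusion stands.
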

\begin{proof}
    Choose $\delta := 1/(4k^2)$ and $b := 2k$. Let $C = C(k,b,\delta)$ and $\beta = \beta(k,b,\delta)$ be as in \Cref{lem:robust bipartite graph for digraph}. We may assume that $n$ is sufficiently large. By \Cref{lem:robust bipartite graph for digraph}, there exist a vertex $u \in V(D)$, an integer $2 \leq \ell \leq k$, and two subsets $L,R \subseteq V(D)$ satisfying \ref{prop:right robust graph}--\ref{prop:left degree graph}.

    We attempt to construct many $(k,\ell)$-tulips rooted at $u$ as follows. Initialize $U := \{u\}$.
    \begin{enumerate}
        \item Choose a directed path $P = (u, v_1, \ldots, v_{\ell-1}, z_0)$ of length $\ell$ from $u$ to some vertex $z_0 \in R$. Update $U \leftarrow U \cup \{v_1, \ldots, v_{\ell-1}, z_0\}$.
        \item For $1 \leq i \leq k-\ell$, in turn, choose an in-neighbor $y_i$ of $z_{i-1}$ in $L \setminus U$, then choose an out-neighbor $z_i$ of $y_i$ in $R \setminus U$. Update $U \leftarrow U \cup \{y_i, z_i\}$.
        \item Choose a directed path $P' = (u, v_1', \ldots, v_{\ell-1}', z_{k-\ell})$ of length $\ell$ from $u$ to $z_{k-\ell}$ such that the internal vertices $v_1', \ldots, v_{\ell-1}'$ are not in $U$. Update $U \leftarrow U \cup \{v_1', \ldots, v_{\ell-1}'\}$.
    \end{enumerate}
\begin{figure}[h]
\centering
% Scale the drawing without shrinking the labels.
\pgfmathsetlengthmacro{\tulipunit}{0.35*\textwidth/8.3}
\begin{tikzpicture}[yscale=0.9,
  x=\tulipunit,
  y=\tulipunit,
  >={Stealth[length=2.15mm,width=1.55mm]},
  vertex/.style={circle,draw=black,fill=black,inner sep=0pt,
                 minimum size=5.2pt,outer sep=.4pt},
  vertex label/.style={font=\bfseries,inner sep=1.2pt,text=black},
  selected edge/.style={->,draw=pathblue,line width=1.05pt},
  path label/.style={font=\bfseries,inner sep=1.2pt,text=pathblue},
  set label/.style={font=\bfseries,inner xsep=1pt,inner ysep=.6pt}
]

% Shaded vertex classes.
\begin{scope}[on background layer]
  \path[rounded corners=4pt,fill=setblue!17,
        draw=setblue,line width=1.05pt]
    (-4.15,6.30) rectangle (4.15,7.55);
  \path[rounded corners=4pt,fill=setorange!17,
        draw=setorange,line width=1.05pt]
    (-4.15,4.97) rectangle (4.15,6.04);
\end{scope}
\node[set label,text=setblue,anchor=east]
  at (-4.32,6.85) {$R$};
\node[set label,text=setorange,anchor=east]
  at (-4.32,5.59) {$L$};

% Root and the displayed internal vertices of P and P'.
\node[vertex,minimum size=6.2pt] (u) at (0,-.70) {};
\foreach \i/\xx/\yy in {
  1/1.80/.10,
  2/3.00/1.45,
  3/3.55/3.00,
  4/3.65/4.55
}{
  \node[vertex] (l\i) at (-\xx,\yy) {};
  \node[vertex] (r\i) at ( \xx,\yy) {};
}

% Vertices in R.
\foreach \p/\xx in {
  z0/-3.45,z1/-2.20,z2/-.95,z3/.30,
  zprev/2.10,zlast/3.45
}{
  \node[vertex] (\p) at (\xx,6.85) {};
}

% Vertices in L.
\foreach \p/\xx in {
  y1/-2.825,y2/-1.575,y3/-.325,ylast/2.775
}{
  \node[vertex] (\p) at (\xx,5.59) {};
}

% Rounded sides of the tulip.
\draw[selected edge] (u)  to[out=170,in=-35] (l1);
\draw[selected edge] (l1) to[out=145,in=-70] (l2);
\draw[selected edge] (l3) to[out=100,in=-90] (l4);
\draw[selected edge] (l4) to[out=90,in=-98] (z0);

\draw[selected edge] (u)  to[out=10,in=215] (r1);
\draw[selected edge] (r1) to[out=35,in=250] (r2);
\draw[selected edge] (r3) to[out=80,in=270] (r4);
\draw[selected edge] (r4) to[out=90,in=278] (zlast);

% Omitted internal vertices.
\foreach \side in {l,r}{
  \path (\side2) -- (\side3)
    node[midway,sloped,font=\large,text=softgray] {$\cdots$};
}

% Greedy links.
\foreach \i/\j in {1/0,2/1,3/2}{
  \draw[selected edge] (y\i) -- (z\j);
  \draw[selected edge] (y\i) -- (z\i);
}
\draw[selected edge] (ylast) -- (zprev);
\draw[selected edge] (ylast) -- (zlast);

% Vertex labels.
\foreach \p/\lab in {
  z0/{z_0},z1/{z_1},z2/{z_2},z3/{z_3},
  zprev/{z_{k-\ell-1}},zlast/{z_{k-\ell}}
}{
  \node[vertex label,above=2.5pt] at (\p) {$\lab$};
}
\foreach \p/\lab in {
  y1/{y_1},y2/{y_2},y3/{y_3},ylast/{y_{k-\ell}}
}{
  \node[vertex label,below=2.5pt] at (\p) {$\lab$};
}
\node[font=\large,text=softgray] at (1.18,6.85) {$\cdots$};
\node[font=\large,text=softgray] at (1.18,5.59) {$\cdots$};

% Path labels and root label.
\node[path label,left=5pt] at (l3) {$P$};
\node[path label,right=5pt] at (r3) {$P'$};
\node[vertex label,below=3pt] at (u) {$u$};

\end{tikzpicture}
\caption{A $(k,\ell)$-tulip embedded in $D$ using Steps~1--3.
Step~1 embeds the directed path $P$ from $u$ to $z_0$.
Step~2 embeds the vertices $y_1,z_1,\ldots,y_{k-\ell},z_{k-\ell}$
one at a time.
Step~3 embeds the directed path $P'$ from $u$ to $z_{k-\ell}$.}
\label{fig:tulip embedding}
\end{figure}

    Clearly, if this procedure succeeds, then we have found a $(k,\ell)$-tulip in $D$ rooted at $u$.
    Now let us count the number of choices we have at each of these steps.
    
    \textbf{Step 1.} By~\ref{prop:right robust graph}, every vertex in $R$ is $(\delta,\ell)$-robust with respect to $u$, so by~\ref{prop:robust paths}, there are at least $(1-\delta)w_{\ell}(u,R)$ directed paths of length $\ell$ from $u$ to $R$. By~\ref{prop:weight count graph}, this accounts for at least 
    \begin{equation}\label{eq:step 1 count graph}
        (1-\delta)\beta d^\ell \geq \frac{\beta}{2} d^{\ell}
    \end{equation}
    choices for $P$ in Step~$1$. 

    \textbf{Step 2.} Fix $1 \leq i \leq k-\ell$. Suppose we have already chosen $y_1, z_1, \ldots, y_{i-1}, z_{i-1}$, and we wish to choose $y_i$ and $z_i$. Since $z_{i-1} \in R$, we have by~\ref{prop:right degree graph} that there are at least $b = 2k$ choices of $y_i \in L$ such that $y_iz_{i-1} \in E(D)$. Of these, at most $|U| = \ell+1 + 2(i-1) < 2k$ are in $U$, leaving us at least one good choice for $y_i$. Similarly, by~\ref{prop:left degree graph}, there are at least $\beta d - 2k$ good choices for $z_i$. Altogether, this gives at least
    \begin{equation}\label{eq:step 2 count graph}
        \left(\beta d - 2k\right)^{k-\ell} \geq \left(\frac{\beta}{2}\right)^{k-\ell}d^{k-\ell}
    \end{equation}
    choices for $y_1, z_1, \ldots, y_{k-\ell}, z_{k-\ell}$ in Step 2.

    \textbf{Step 3.} Since $z_{k-\ell} \in R$, we have by~\ref{prop:right robust graph} that $z_{k-\ell}$ is $(\delta, \ell)$-robust. Therefore, by~\ref{prop:robust vertex}, less than $(\ell-1)|U|\delta w_\ell(u,z_{k-\ell}) \leq w_\ell(u,z_{k-\ell})/2$ walks of length $\ell$ from $u$ to $z_{k-\ell}$ use an internal vertex from $U$, and by~\ref{prop:robust paths}, less than $\delta w_\ell(u,z_{k-\ell}) \leq w_\ell(u,z_{k-\ell})/2$ walks of length $\ell$ from $u$ to $z_{k-\ell}$ are degenerate. This leaves a positive number of choices for the directed path $P'$ in Step 3.

    Now putting it all together, we have by~\eqref{eq:step 1 count graph} and~\eqref{eq:step 2 count graph} that there are at least 
    \[\frac{\beta}{2} d^{\ell} \cdot \left(\frac{\beta}{2}\right)^{k-\ell}d^{k-\ell} = \left(\frac{\beta}{2}\right)^{k-\ell+1}d^k\]
    ways to choose a sequence $(P, y_1, z_1, \ldots, y_{k-\ell}, z_{k-\ell}, P')$ as in Steps 1--3. No more than two of these give rise to the same $(k,\ell)$-tulip, so there are at least $(\beta/2)^{k-\ell+1}d^k/2$ many $(k,\ell)$-tulips in $D$ rooted at $u$.
\end{proof}

Now we again consider undirected graphs. The previous lemma has the following consequence, which already recovers the Bondy--Simonovits Theorem (\Cref{thm:bondy-simonovits}).
\begin{lemma}\label{lem:many cycles at a vertex}
    For every $k \geq 2$, there exist constants $C, c > 0$ such that every $n$-vertex graph $G$ with at least $Cn^{1+1/k}$ edges has a set $E$ of at most $2Cn^{1/k}$ edges (sharing a common vertex) such that at least $cn$ cycles of length $2k$ in $G$ contain an edge of $E$.
\end{lemma}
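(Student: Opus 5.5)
We reduce to \Cref{lem:many oriented cycles} via the out-regular digraph described in the proof outline. Let $C_0, c_0$ be the constants of \Cref{lem:many oriented cycles}, and set $C := C_0$. Given $G$ with $e(G) \geq Cn^{1+1/k}$, its average degree is at least $2Cn^{1/k}$. We first pass to a subgraph $G'$ of minimum degree greater than $Cn^{1/k}$, obtained by repeatedly deleting vertices of degree at most $Cn^{1/k}$. Next we replace every edge of $G'$ by two antiparallel arcs. For each vertex we keep exactly $d := \lceil Cn^{1/k}\rceil$ of its out-arcs, discarding the rest, and we also discard the isolated vertices of $G$. The result is a $d$-out-regular digraph $D$ on $n' \leq n$ vertices with $d \geq Cn^{1/k} \geq C n'^{1/k}$.

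\Cref{lem:many oriented cycles} then yields a vertex $u$, some $2 \leq \ell \leq k$, and at least $c_0 d^k$ $(k,\ell)$-tulips in $D$ rooted at $u$. Each such tulip is an orientation of a $2k$-cycle whose underlying edges lie in $G$. This holds because the vertices $u$, the internal vertices of $P$ and $P'$, and the $z_i, y_i$ are pairwise distinct, with $\ell-1$ internal vertices on each of $P, P'$, together with $k-\ell+1$ vertices $z_i$ and $k-\ell$ vertices $y_i$, for $2k$ vertices in total. Every such cycle contains two edges at $u$, namely the first edges of $P$ and $P'$, and these are out-arcs of $u$ in $D$. We therefore take $E$ to be the set of $d$ edges of $G$ corresponding to the out-arcs of $u$ kept in $D$. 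These edges all share the vertex $u$, and $|E| = d \leq Cn^{1/k}+1 \leq 2Cn^{1/k}$.

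It remains to convert the tulip count into a cycle count. A fixed undirected $2k$-cycle through $u$ corresponds to at most a bounded number of tulips rooted at $u$. Once the root $u$ and the value of $\ell$ are fixed, the orientation pattern of the tulip is determined by the position of $u$ on the cycle and a choice of direction, giving at most $2k$ possibilities. Hence the number of distinct $2k$-cycles in $G$ meeting $E$ is at least $c_0 d^k/(2k)$. Since $d^k \geq C^k n$, this is at least $(c_0C^k/(2k))\,n$, so setting $c := c_0 C^k/(2k)$ finishes the argument.

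The only delicate point is the bookkeeping in the first reduction. We must make sure that the out-regularization keeps $d \geq C n'^{1/k}$ with the correct constant. For this we apply \Cref{lem:many oriented cycles} on $n'$ vertices, using $n' \leq n$ so that the threshold only becomes easier. We must also check that the cycles counted live in the original $G$, which is immediate since $D$ uses only edges of $G' \subseteq G$. No step looks like a genuine obstacle.
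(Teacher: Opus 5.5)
Your proposal is correct and follows the paper's proof essentially step for step: pass to a subgraph of large minimum degree, double the edges, extract a $d$-out-regular subdigraph with $d=\lceil Cn^{1/k}\rceil$, apply \Cref{lem:many oriented cycles}, and take $E$ to be the edges at the root $u$ used as out-arcs in $D$. The only difference is your division by $2k$. This is harmless but unnecessary: once $u$ and $\ell$ are fixed, the underlying $2k$-cycle determines the tulip's orientation, so the paper gets the constant $c = c'C^k$ with no loss.
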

\begin{proof}
    Let $C,c' > 0$ be the constants given by \Cref{lem:many oriented cycles} with $c$ replaced by $c'$. Let $G$ be an $n$-vertex graph with at least $Cn^{1+1/k}$ edges. Then $G$ has a nonempty subgraph $G'$ on $n' \leq n$ vertices with minimum degree at least $Cn^{1/k}$. Let~$D'$ be the digraph obtained from~$G'$ by replacing each edge with two antiparallel directed edges on the same pair of vertices. Then~$D'$ has minimum out-degree at least $Cn^{1/k}$ and contains a $d$-out-regular subgraph~$D$, where $d := \lceil Cn^{1/k} \rceil \geq C(n')^{1/k}$. By \Cref{lem:many oriented cycles}, $D$ contains a vertex $u$ and at least $c'd^k \geq c'C^kn$ many $(k,\ell)$-tulips rooted at $u$. Let $E'$ be the set of edges directed away from $u$ in~$D$, and let $E$ be the corresponding set of edges in $G$. The number of such edges is at most $d \leq 2Cn^{1/k}$ since $D$ is $d$-out-regular. Now every $(k,\ell)$-tulip rooted at $u$ must use some edge of $E'$ (two edges, in fact), so every $(k,\ell)$-tulip rooted at $u$ corresponds to a (unique) cycle of length $2k$ in $G$ containing an edge of $E$. Thus the statement of the lemma holds for $c := c'C^k$.
\end{proof}
The rest of this section is devoted to proving the Erd\H{o}s-Simonovits supersaturation result for even cycles, where we want to find $\Omega((e(G)/n)^{2k})$ cycles of length $2k$ in any $n$-vertex graph $G$ with at least $Cn^{1+1/k}$ edges. First, we can iteratively apply the previous lemma in order to increase the number of cycles of length $2k$ from $\Omega(n)$ to $\Omega(n^2)$, which is the correct count for \Cref{thm:simonovits cycle supersaturation} when the number of edges is $\Theta(n^{1+1/k})$.
\begin{lemma}\label{lem:cycles at the threshold}
    For every $k \geq 2$, there exist constants $C, c > 0$ such that every $n$-vertex graph $G$ with at least $Cn^{1+1/k}$ edges has at least $cn^2$ cycles of length $2k$.
\end{lemma}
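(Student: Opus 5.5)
We obtain \Cref{lem:cycles at the threshold} by applying \Cref{lem:many cycles at a vertex} repeatedly and deleting the edges it returns after each application. Let $C_0, c_0$ be the constants from \Cref{lem:many cycles at a vertex}, and set $C := 2C_0$. Let $G$ be an $n$-vertex graph with at least $Cn^{1+1/k}$ edges.

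\textbf{The iteration.} Set $G_0 := G$. For $i = 0, 1, 2, \ldots$, as long as $e(G_i) \geq C_0 n^{1+1/k}$, we do the following.
\begin{enumerate}
\item Apply \Cref{lem:many cycles at a vertex} to $G_i$. This gives a set $E_i$ of at most $2C_0 n^{1/k}$ edges such that at least $c_0 n$ cycles of length $2k$ in $G_i$ contain an edge of $E_i$.
\item Set $G_{i+1} := G_i \setminus E_i$.
\end{enumerate}
Each step deletes at most $2C_0 n^{1/k}$ edges. Starting from at least $2C_0 n^{1+1/k}$ edges, it therefore takes at least $\frac{C_0 n^{1+1/k}}{2C_0 n^{1/k}} = n/2$ steps before the edge count can drop below $C_0 n^{1+1/k}$. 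So the procedure runs for at least $m := \lfloor n/2 \rfloor$ steps.

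\textbf{No cycle is counted twice.} Fix a cycle counted at step $i$. It lies in $G_i$ and contains an edge of $E_i$. That edge is absent from $G_j$ for every $j > i$, so the cycle is not a subgraph of any later $G_j$ and cannot be counted again.

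\textbf{The count.} Summing over the steps, $G$ contains at least $m \cdot c_0 n \geq c_0 n^2/3$ cycles of length $2k$ for $n \geq 2$. This gives the lemma with $c := c_0/3$. Small $n$ is handled by enlarging $C$: if $Cn^{1+1/k} > \binom n2$, the hypothesis is vacuous.

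The only point that needs care is choosing $C$ as a multiple of $C_0$, so that the number of guaranteed steps is linear in $n$. Linearity holds because \Cref{lem:many cycles at a vertex} removes only $O(n^{1/k})$ edges per application while the hypothesis supplies $\Theta(n^{1+1/k})$ edges. The full statement of \Cref{thm:simonovits cycle supersaturation} should then follow by combining this lemma with \Cref{prop:supersaturation lifting hypergraph}, applied to the $2k$-uniform hypergraph on $E(G)$ whose edges are the edge sets of $2k$-cycles.
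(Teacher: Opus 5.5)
Your proposal is correct and follows the paper's proof: apply \Cref{lem:many cycles at a vertex} repeatedly, delete the returned edge set $E_i$ each time, and use that the resulting families of $2k$-cycles are pairwise disjoint. The only difference is the constant. You take $C = 2C_0$ and get at least $n/2$ rounds, while the paper starts from $3C'n^{1+1/k}$ edges and gets at least $n$ rounds; this does not change the argument.
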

\begin{proof}
    Let $C',c > 0$ be the constants given by \Cref{lem:many cycles at a vertex} with $C$ replaced by $C'$. Let $G$ be an $n$-vertex graph with at least $3C' n^{1+1/k}$ edges.
    
    Define $G_0 := G$. For each $i \geq 1$ such that $G_{i-1}$ has at least $C'n^{1+1/k}$ edges, define a spanning subgraph $G_i \subseteq G_{i-1}$ as follows. By \Cref{lem:many cycles at a vertex}, there exists a set $E_{i-1}$ of at most $2C'n^{1/k}$ edges in $G_{i-1}$ and a collection $\mathcal C_{i-1}$ of at least $cn$ cycles of length $2k$ in $G_{i-1}$ such that every cycle in $\mathcal C_{i-1}$ contains some edge of~$E_{i-1}$. Now take $G_i := G_{i-1} \setminus E_{i-1}$.

    Let $i_0 \geq 0$ be the greatest index for which $G_{i_0}$ is defined. Since $G_{i_0+1}$ was not defined, we must have that $e(G_{i_0}) < C'n^{1+1/k}$. Therefore,
    \[2C'n^{1+1/k}\leq e(G_0) - e(G_{i_0}) \leq \sum_{i=0}^{i_0-1} |E_i| \leq i_0(2C'n^{1/k}),\]
    and so $i_0 \geq n$.

    Note that by design, the sets $\mathcal C_0, \ldots, \mathcal C_{i_0-1}$ are disjoint collections of cycles of length $2k$ in $G$, so there are at least
    \[\sum_{i=0}^{i_0-1} |\mathcal C_i| \geq cn^2\]
    cycles of length $2k$ in $G$ in total, as desired.
\end{proof}
Now the standard sampling argument discussed at the beginning of \Cref{sec:prelim} lifts this supersaturation result for graphs with $\Theta(n^{1+1/k})$ edges to a supersaturation result for graphs with $\Omega(n^{1+1/k})$ edges, giving \Cref{thm:simonovits cycle supersaturation}.

\begin{proof}[Proof of \Cref{thm:simonovits cycle supersaturation}]

Let $C, c' > 0$ be the constants given by \Cref{lem:cycles at the threshold} with $c'$ in place of $c$. Fix an $n$-vertex graph $G$ with at least $Cn^{1+1/k}$ edges. Let $\mathcal H$ be the $2k$-uniform hypergraph on vertex set $E(G)$ whose hyperedges are the edge sets of cycles of length $2k$ in $G$. If $G' \subseteq G$ is a spanning subgraph with exactly $m := \lceil C n^{1+1/k} \rceil$ edges, then by \Cref{lem:cycles at the threshold}, $G'$ contains at least
    \[c'n^2 = \frac{c'n^2m^{2k}}{m^{2k}} \geq \frac{c'n^2m^{2k}}{(2Cn^{1+1/k})^{2k}} = \frac{c'm^{2k}}{(2C)^{2k}n^{2k}}\]
    cycles of length $2k$, and $E(G')$ contains at least the same number of hyperedges of $\mathcal H$. Thus, by \Cref{prop:supersaturation lifting hypergraph}, the number of cycles of length $2k$ in $G$ is at least
    \[e(\mathcal H) \geq \frac{c'e(G)^{2k}}{(2C)^{2k}n^{2k}},\]
    so the theorem holds with $c := c'/(2C)^{2k}$.
\end{proof}

\section{Inseparable solutions}\label{sec:inseparable}
In the next two sections, we will prove Theorems~\ref{thm:additively generic integers} and~\ref{thm:inseparable and additively generic finite group}, which involve finding inseparable, and in some cases additively generic, solutions to~\eqref{eq:equal k-sums} in integer and finite Abelian group settings. These theorems will be deduced from the following more general statements which hold for both finite and infinite Abelian groups. The first involves inseparable solutions in arbitrary Abelian groups, and it will be proved in this section, while the second involves additively generic solutions in Abelian groups with no $2$-torsion, and its proof will be given in \Cref{sec:additively generic}.

\begin{theorem}\label{thm:inseparable small growth}
    For every integer $k \geq 2$, there exist constants $\eps, c > 0$ such that for every Abelian group $\Gamma$ and finite subset $A \subseteq \Gamma$ with $|A^{+k}| \leq \eps |A|^k$, there are at least $c|A|^k$ inseparable solutions $(x_1, \ldots, x_{2k}) \in A^{2k}$ to the equation
    \[x_1 + \cdots + x_k = x_{k+1} + \cdots + x_{2k} \tag{\ref{eq:equal k-sums}}.\]
\end{theorem}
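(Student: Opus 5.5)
We transfer the tulip method of \Cref{sec:graphs} to the additive setting. Walks of length $t$ from the root become ordered $t$-tuples $(a_1,\ldots,a_t)\in A^t$, and the endpoint of such a walk is the sum $a_1+\cdots+a_t\in A^{+t}$. We think of the Cayley sum digraph on $\Gamma$ in which $g\to g+a$ for $a\in A$, and root everything at $0$. We write $d:=|A|$ and define $w_t(z):=\#\{(a_1,\ldots,a_t)\in A^t:a_1+\cdots+a_t=z\}$, so that $w_t(\Gamma)=d^t$. A walk is degenerate if the $a_i$ are not distinct. In place of vertex-disjointness we need the stronger \emph{subset-sum robustness} of a pair $(0,z)$: for every proper nonempty $S\subsetneq[t]$ and every $g\in\Gamma$, fewer than a $\delta$-proportion of the representations of $z$ have $\sum_{i\in S}a_i=g$ and also use a prescribed element. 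The hypothesis $|A^{+k}|\le\eps|A|^k$ plays the role of $d\ge Cn^{1/k}$. Indeed, taking $Z=A^{+k}$ gives $|Z|\le\eps_k d^k$ and $w_k(Z)=d^k$, so property~\eqref{eq:star graph} holds at level $k$.

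First we prove an analogue of \Cref{lem:robust bipartite graph for digraph}. Choose $\ell\le k$ minimal such that some $Z\subseteq\Gamma$ has $|Z|\le\eps_\ell d^\ell$ and $w_\ell(Z)\ge\gamma_\ell d^\ell$. As before, $\ell\ge2$ because $w_1(Z)\le|Z|$. Because of translation invariance there is effectively only one root, and the \emph{split} of a walk into its first $t$ steps and its last $\ell-t$ steps is replaced by a split along an arbitrary subset $S$: we have $w_\ell(z)\ge\sum_g w_{|S|}(g)\,w_{\ell-|S|}(z-g)$, and the same convolution structure holds. The proof of \Cref{cla:robust weight digraph} then goes through verbatim, with the pair $(t,y)$ replaced by $(S,g)$. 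There are only $2^\ell$ choices of $S$. The ``large'' classes give sets $Y$ of size at most $\eps_{|S|}d^{|S|}$, whose weight is small by minimality, and the ``small'' classes give weight at most $\gamma_{\ell-|S|}$. Degenerate walks contribute $O(d^{\ell-1})$. We conclude that robust endpoints $\hat R\subseteq Z$ carry weight at least $\gamma_\ell d^\ell/2$. Next we build the bipartite graph $\hat H$ between $\hat L=A^{+(\ell-1)}$ (weighted by $w_{\ell-1}$) and $\hat R$, with $y\sim z$ when $z-y\in A$, and prune it exactly as in the graph case. This yields $L,R$ with $w_\ell(R)\ge\beta d^\ell$, every $z\in R$ having at least $b$ neighbours $y\in L$, and every $y\in L$ having at least $\beta d$ neighbours in $R$.

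Next we embed tulips. Pick a robust nondegenerate representation $z_0=a_1+\cdots+a_\ell$ with $z_0\in R$; this gives $\Omega(d^\ell)$ choices. Then alternate for $k-\ell$ steps. Choose $y_i\in L$ with $z_{i-1}-y_i=:b_i\in A$, which has at least $b$ options. Then choose $z_i\in R$ with $z_i-y_i=:c_i\in A$, which has $\Omega(d)$ options. This gives $\Omega(d^{k-\ell})$ choices in total. Finally, close the cycle with a representation $z_{k-\ell}=a'_1+\cdots+a'_\ell$. Telescoping gives
\[
a_1+\cdots+a_\ell+c_1+\cdots+c_{k-\ell}=a'_1+\cdots+a'_\ell+b_1+\cdots+b_{k-\ell},
\]
which is a solution of~\eqref{eq:equal k-sums} with $2k$ variables, and the total count is $\Omega(d^k)$ solutions. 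Different choice sequences give boundedly many repeats, as in \Cref{lem:many oriented cycles}.

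The main obstacle is enforcing inseparability and distinctness, which are much stronger than vertex-disjointness. We must avoid every coincidence $\sum_{S}x_i=\sum_{T}x_{k+j}$ with $(S,T)\notin\{(\emptyset,\emptyset),([k],[k])\}$. Our plan is to handle these at two points. During the greedy steps, each new $c_i$ (and, by choosing among at least $b$ options, each $b_i$) must avoid the $O(1)$ values forbidden by the previously chosen elements; this costs only constant factors if $b$ is large. Distinctness of the $b_i$ alone may require $b\ge 2k$ plus choosing $y_i$ to avoid $O(1)$ bad values. The last path is the real issue: every forbidden relation involving some $a'_j$ specifies a subset sum $\sum_{j\in S'}a'_j=g$ for one of $O_k(1)$ values $g$ determined by earlier choices. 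Here $S'$ must be a proper nonempty subset, or else the relation collapses to a relation among earlier elements, which we already excluded. By subset-sum robustness, each such constraint kills fewer than a $\delta$-fraction of representations of $z_{k-\ell}$, so with $\delta$ small relative to $4^{k}$ a valid closing representation survives. I expect the delicate point to be verifying that every nontrivial relation really does reduce to one of these two forms, and in particular checking the case in which $S'$ is all of $[\ell]$ combined with a partial set of the other variables, which reduces to a constraint on earlier choices that must be excluded in advance.
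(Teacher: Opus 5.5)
Your architecture is the paper's. It uses the minimal level $\ell$ and robustness via $w_{|S|}(g)\,w_{\ell-|S|}(z-g)<\delta w_\ell(z)$; by coordinate symmetry this is exactly subset-sum robustness for arbitrary $S$, and your ``prescribed element'' condition is the case $|S|=1$. It then prunes a bipartite graph on $A^{+(\ell-1)}$ and $\hat R$, and builds a greedy tulip closed by a robust representation of $z_{k-\ell}$. Transplanting the graph-case counting into the lemma works; the paper reparametrizes with constants $K_\ell$ only to get better constants.

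The gap is your assertion that relations among earlier elements are ``already excluded.'' Your greedy constraints on $b_i,c_i$ can only \emph{propagate} a no-relation property from the initial block. But your Step~1 asks only that $a_1,\ldots,a_\ell$ be distinct. Nothing then prevents $a_1+a_2=0$ (when $A\cap(-A)$ contains some $a\neq -a$), or $z_0=a_1+\cdots+a_\ell=0$ (when $0\in R$). Either is a vanishing signed sum over a proper subset of the $2k$ variables, so the output is not inseparable. Your $S'=[\ell]$ reduction can land on exactly such an unexcluded relation.

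The paper closes this gap as follows.
\begin{itemize}
\item In Step~1, require $\{a_1,\ldots,a_\ell\}$ to be dissociated. At most $3^\ell|A|^{\ell-1}$ tuples fail this, which is negligible because $|A|\le|A^{+k}|\le\eps|A|^k$ forces $|A|$ to be large.
\item In each greedy step, take $b_i\notin\spn^{\pm}(U)$ and then $c_i\notin\spn^{\pm}(U\cup\{b_i\})$. There are fewer than $3^{2k}$ forbidden values, so $b=3^{2k}$ suffices. This keeps the set $U_0$ of all elements chosen before the last step dissociated, which also gives all distinctness among them for free.
\item In the last step, require $a'_1,\ldots,a'_\ell$ distinct with $\sum_{j\in T'}a'_j\notin\spn^{\pm}(U_0)$ for every proper nonempty $T'\subseteq[\ell]$. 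Robustness allows this once $\delta$ is small in terms of $k$.
\end{itemize}

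Your delicate case is then short. Take a relation $\sum_S(\text{left})=\sum_T(\text{right})$, and let $T'$ be the set of indices $j$ with $a'_j$ used. Then $\sum_{T'}a'_j\in\spn^{\pm}(U_0)$, so $T'\in\{\emptyset,[\ell]\}$.
\begin{itemize}
\item If $T'=\emptyset$, dissociation of $U_0$ gives $S=T=\emptyset$.
\item If $T'=[\ell]$, substitute $\sum_j a'_j=\sum_i a_i+\sum_i c_i-\sum_i b_i$. What remains is a relation inside $U_0$ between the unused left-side elements and the unused $b_i$. Dissociation forces both to be empty, i.e.\ $S=T=[k]$.
\end{itemize}
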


\begin{theorem}\label{thm:truly additively generic small growth}
    For every integer $k \geq 2$, there exist constants $\eps, c > 0$ such that for every Abelian group~$\Gamma$ with no $2$-torsion and finite subset $A \subseteq \Gamma$ with $|A^{+k}| \leq \eps |A|^k$, there are at least $c|A|^k$ additively generic solutions $(x_1, \ldots, x_{2k})$ to the equation
    \[x_1 + \cdots + x_k = x_{k+1} + \cdots + x_{2k}. \tag{\ref{eq:equal k-sums}}\]
\end{theorem}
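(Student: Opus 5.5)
The plan is to transfer the tulip method of \Cref{sec:bondy proof} to the Cayley-type setting. Vertices become elements of $\Gamma$, and the edges $y\to z$ are those with $z-y\in A$. Thus $w_\ell(z)$ denotes the number of $\ell$-tuples $(a_1,\dots,a_\ell)\in A^\ell$ with $a_1+\cdots+a_\ell=z$, so that $\sum_z w_\ell(z)=|A|^\ell$, and the hypothesis $|A^{+k}|\le\eps|A|^k$ plays the role of $d\ge Cn^{1/k}$.

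\textbf{Step 1: an additive analogue of \Cref{lem:robust bipartite graph for digraph}.} I would take $\ell\in[k]$ minimal such that some $Z\subseteq\Gamma$ has $|Z|\le\eps_\ell|A|^\ell$ and $w_\ell(Z)\ge\gamma_\ell|A|^\ell$. For $\ell=k$ one can take $Z=A^{+k}$, and $\ell=1$ is impossible since $w_1\le 1$ pointwise. The notion of robustness of $z$ will be the following strengthened form of \Cref{def:robust graph}:
\begin{itemize}
\item[(i)] for every proper nonempty $S\subseteq[\ell]$ and every $g\in\Gamma$, fewer than a $\delta$-fraction of the representations $z=a_1+\cdots+a_\ell$ satisfy $\sum_{i\in S}a_i=g$;
\item[(ii)] fewer than a $\delta$-fraction of the representations are not strongly dissociated.
\end{itemize}
The proof of \Cref{cla:robust weight digraph} should go through verbatim. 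A violation of (i) with $|S|=t$ exhibits a large product $w_t(g)\,w_{\ell-t}(z-g)$, and the minimality of $\ell$ together with the small/large splitting controls its total contribution. For (ii), the number of $\ell$-tuples satisfying a fixed nontrivial relation with coefficients in $\{-2,\dots,2\}$ is $O(|A|^{\ell-1})$, because some variable occurs with a nonzero coefficient and, since $\Gamma$ has no $2$-torsion, it is then determined by the other variables; this total is negligible against $\gamma_\ell|A|^\ell$. The bipartite cleaning step is unchanged. It yields $L\subseteq A^{+(\ell-1)}$ and $R\subseteq Z$ consisting of robust elements, with $w_\ell(R)\ge\beta|A|^\ell$, every $z\in R$ having at least $b$ elements $y\in L$ with $z-y\in A$, and every $y\in L$ having at least $\beta|A|$ elements $a\in A$ with $y+a\in R$.

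\textbf{Step 2: building the tulips.} First choose a representation $z_0=p_1+\cdots+p_\ell$ with $z_0\in R$; there are about $\beta|A|^\ell$ choices. Then for $i=1,\dots,k-\ell$, choose $y_i\in L$ with $a_i':=z_{i-1}-y_i\in A$, and then $a_i\in A$ with $z_i:=y_i+a_i\in R$; this gives about $\beta|A|$ choices of $a_i$ at each step. Finally choose a representation $z_{k-\ell}=q_1+\cdots+q_\ell$. Together these give
\[p_1+\cdots+p_\ell+a_1+\cdots+a_{k-\ell}=q_1+\cdots+q_\ell+a_1'+\cdots+a_{k-\ell}',\]
which is an instance of \eqref{eq:equal k-sums}, and the number of sequences is $\Omega(|A|^k)$.

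\textbf{Step 3: genericity.} Throughout the greedy procedure I will keep the set $U$ of chosen elements strongly dissociated and in general position. Concretely, each new $a_i$ must avoid $\spn^{\pm2}(U)$ and all elements completing a short signed relation with $U$. This excludes only $O_k(1)$ values per step, which is harmless against $\beta|A|$ choices. Choosing the $y_i$ with $b$ large lets me also avoid the boundedly many $a_i'$ that would create a relation.

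The main obstacle is the final representation $(q_1,\dots,q_\ell)$, which is forced to sum to $z_{k-\ell}$. Consider any signed relation $\tau\notin\{0,\pm\sigma\}$ among the $2k$ elements that involves the $q$'s. After subtracting a suitable multiple of the main equation, it can be rewritten as $\sum_{i\in S}\tau_i q_i=g$, where $g$ lies in $\spn^{\pm2}(U)$ and $S$ is a nonempty set of coordinates on which the reduced coefficients are nonzero. Here the reduced coefficients lie in $\{-2,\dots,2\}$, which is exactly why strong dissociation and the absence of $2$-torsion are needed.

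Two cases arise. If $S$ is proper, or the coefficients on $S$ are not all equal, then (i) or (ii) shows that only a $\delta$-fraction of the representations are killed for each of the $O_k(1)$ choices of $g$. If instead the reduced relation is $\pm(q_1+\cdots+q_\ell)=g$, then $\tau$ is a multiple of $\sigma$ plus a relation among $U$, which the strong dissociation of $U$ rules out.

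With $\delta$ chosen small in terms of $k$, at least one good $(q_i)$ survives. This gives $\Omega(|A|^k)$ additively generic solutions, after dividing by the bounded multiplicity with which a single solution arises.
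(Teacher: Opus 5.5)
The gap is in your Step 3, in the case you dispatch with ``(i) or (ii)''.

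Your robustness conditions only control \emph{unsigned} partial sums $\sum_{i\in S}q_i$ and relations with right-hand side $0$. When $\ell\ge 3$, additive genericity must also exclude relations whose restriction to the $q$-block has mixed signs on a proper support. An example is $q_1-q_2-p_1=0$, i.e.\ $q_1-q_2=g$ with $g=p_1\in U$. Adding or subtracting the main equation turns the $q$-coefficients $(1,-1,0,\dots,0)$ into $(2,0,1,\dots,1)$ or $(0,-2,-1,\dots,-1)$. Both are still of mixed type, so no reduction brings this relation under (i), and (ii) only covers $g=0$.

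In fact (i) and (ii) give no information here. Take $\ell=3$, and suppose the representations of $z$ are exactly the orderings of the triples $\{u,u+g,z-2u-g\}$, with $u$ ranging over a generic set $W$ of size larger than $1/\delta$. Each element lies in at most three such triples, so (i) and (ii) hold. Yet one sixth of the representations satisfy $q_1-q_2=g$. So your union bound over $g\in\spn^{\pm2}(U)$ is unsupported.

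The fix is to add the missing condition, for every nonzero $\bar\tau\in\{-1,0,1\}^\ell$ other than $\pm(1,\dots,1)$ and every $g$. But then your claim that the proof of \Cref{cla:robust weight digraph} transfers verbatim breaks down. For mixed $\bar\tau$, the number of bad representations is not a product $w_t(g)w_{\ell-t}(z-g)$. Also, minimality of $\ell$ for unsigned weights gives no control over signed weights such as $w_{(1,-1)}$.

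This is exactly the difficulty the paper's proof is built around:
\begin{enumerate}
\item It defines $w_{\bar\sigma}$ for every sign pattern and takes $\ell$ minimal over all $\bar\sigma\in\{\pm1\}^\ell$ simultaneously.
\item It uses the absence of $2$-torsion to halve $\bar\sigma\pm\bar\tau$, reducing full-support $\bar\tau$ to proper support.
\item It bounds the non-robust weight fiber by fiber over the value $y$ of the $\bar\tau$-sum. Large fibers are handled by minimality applied to the signed weight $w_{\bar\tau}$, and small fibers by the potential bound $w_{\bar\sigma'}(V_{\bar a})\le\gamma_{\ell-t}|A|^{\ell-t}$.
\end{enumerate}
Because the resulting $\bar\sigma$ may be mixed, the tulip is then built with the sign $\sigma_\ell$ on the middle steps. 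This gives a generic solution of a signed version of \eqref{eq:equal k-sums}, which becomes a solution of \eqref{eq:equal k-sums} after swapping $x_i\leftrightarrow x_{k+i}$ wherever the relevant sign is negative. Your all-plus construction has no room for this.
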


Assuming these two theorems, we now use \Cref{prop:supersaturation lifting hypergraph} to deduce Theorems~\ref{thm:additively generic integers} and~\ref{thm:inseparable and additively generic finite group}. We begin in the integer setting.

\begin{proof}[Proof of \Cref{thm:additively generic integers}]
    Let $\eps, c' > 0$ be the constants given by \Cref{thm:truly additively generic small growth} with $c'$ in place of $c$, and define $C := (\eps/k)^{-1/k}$. Fix a positive integer $N$, and let $A \subseteq [N]$ with size at least $CN^{1/k}$. Let $\mathcal H$ be the $2k$-uniform hypergraph on vertex set $A$ whose edges are the subsets $\{x_1, \ldots, x_{2k}\} \subseteq A$ such that $(x_1, \ldots, x_{2k})$ is an additively generic solution to~\eqref{eq:equal k-sums}. If $A' \subseteq A$ has size exactly $m := \lceil C N^{1/k} \rceil$, then 
    \[|(A')^{+k}| \leq kN  = \eps C^k N  \leq \eps |A'|^k,\]
    so it follows from \Cref{thm:truly additively generic small growth} that $A' \subseteq V(\mathcal H)$ contains at least $c'm^k/(2k)!\geq c' m^{2k}/((2k)!(2C)^kN)$ hyperedges. Thus, by \Cref{prop:supersaturation lifting hypergraph}, the number of additively generic solutions to~\eqref{eq:equal k-sums} in $A$ is at least $e(\mathcal H) \geq c'|A|^{2k}/((2k)! (2C)^kN),$
    so the theorem holds with $c := c'/((2k)!(2C)^k)$.
\end{proof}

Next we consider the finite group setting.

\begin{proof}[Proof of \Cref{thm:inseparable and additively generic finite group}]
 Let $\eps, c' > 0$ be the smaller of the constants given by \Cref{thm:inseparable small growth} and \Cref{thm:truly additively generic small growth} with~$c'$ in place of $c$, and define $C := \eps^{-1/k}$. Fix a finite Abelian group $\Gamma$ of order $N$, and let $A \subseteq \Gamma$ with size at least $CN^{1/k}$. Observe that if $N$ is odd, then $\Gamma$ has no $2$-torsion. Let $\mathcal H$ be the $2k$-uniform hypergraph on vertex set $A$ whose edges are the subsets $\{x_1, \ldots, x_{2k}\} \subseteq A$ such that $(x_1, \ldots, x_{2k})$ is an inseparable solution to~\eqref{eq:equal k-sums} if $N$ is even, or an additively generic solution to~\eqref{eq:equal k-sums} if $N$ is odd. If $A' \subseteq A$ has size exactly $m := \lceil C N^{1/k} \rceil$, then 
    \[|(A')^{+k}| \leq N  = \eps C^k N  \leq \eps |A'|^k,\]
    so it follows from Theorems~\ref{thm:inseparable small growth} and~\ref{thm:truly additively generic small growth} that $A' \subseteq V(\mathcal H)$ contains at least $c'm^k/(2k)! \geq c' m^{2k}/((2k)!(2C)^kN)$ hyperedges.
    Thus, by \Cref{prop:supersaturation lifting hypergraph}, we have $e(\mathcal H) \geq c|A|^{2k}/N$, where $c:= c'/((2k)!(2C)^k)$. This means that $A$ contains at least $c|A|^{2k}/N$ solutions to~\eqref{eq:equal k-sums} which are inseparable if $N$ is even and additively generic if $N$ is odd.
\end{proof}

The remainder of this section is devoted to proving \Cref{thm:inseparable small growth}. The proof technique we use is essentially equivalent to applying the argument given in the previous section to the edge-colored Cayley digraph $D = \mathrm{Cay}(\Gamma,A)$ on $\Gamma$ generated by the set $A$, which is defined to be the digraph on vertex set $\Gamma$ and color set $A$ with an edge from $g$ to $g + a$ of color $a$ for every $g \in \Gamma$ and $a \in A$. Here, the inseparable solutions to~\eqref{eq:equal k-sums} in $A$ that we look for correspond to $(k,\ell)$-tulips in $D$ for some $2 \leq \ell \leq k$, with some additional structure. The number of walks of length $\ell$ from a vertex $u$ to a vertex $z$ in $D$ is the number of representations of the element $z-u$ as a sum of $\ell$ elements of $A$, and this is how we will define our weight function.
\begin{definition}\label{def:weight inseparable}
    Let $\Gamma$ be an Abelian group, let $A$ be a fixed finite subset of $\Gamma$, and let $\ell$ be a positive integer. The \emph{$\ell$-weight} of an element $z \in \Gamma$, denoted $w_\ell(z)$, is the number of ordered $\ell$-tuples $(a_1, \ldots, a_\ell) \in A^{\ell}$ with $a_1 + \cdots + a_\ell = z$. If $Z \subseteq \Gamma$, then we define the \emph{$\ell$-weight} of $Z$ as
    \[w_{\ell}(Z) := \sum_{z \in Z} w_\ell(z).\]
\end{definition}
The main difference between this proof and the one given in the previous section is that the $(k,\ell)$-tulips we wish to find in the Cayley digraph must not only avoid vertex collisions, but also avoid certain additive collisions. In particular, the condition~\ref{prop:distinct generic inseparable} corresponds to these tulips being rainbow, and the condition~\ref{prop:different subset sums generic inseparable} is more delicate and cannot be entirely encoded in the Cayley digraph itself, even with colors. This means that we will require a stronger notion of robustness compared to the one given in \Cref{def:robust graph}.

\begin{definition}\label{def:robust inseparable}
    Let $\Gamma$ be an Abelian group, let $A$ be a fixed finite subset of $\Gamma$, let $\ell$ be a positive integer, and let $\delta > 0$. We say that an element $z \in \Gamma$ is \emph{$(\delta, \ell)$-robust} with respect to $A$ if the following hold:
    \stepcounter{propcounter}
\begin{enumerate}[label = {{{\normalfont\textbf{\Alph{propcounter}\arabic{enumi}}}}}]
    \item\label{prop:robust partial sum inseparable} For every $y \in \Gamma$ and proper nonempty subset $S \subseteq [\ell]$, less than a $\delta$-proportion of the representations of $z$ as $z = a_1 + \cdots + a_\ell$ with $a_1, \ldots, a_\ell \in A$ have $\sum_{i \in S} a_i = y$; that is,
    \[w_{|S|}(y)w_{\ell-|S|}(z-y) < \delta w_\ell(z).\]
    \item\label{prop:robust dissociated inseparable} More than a $(1- \delta)$-proportion of the representations of $z$ as $z = a_1 + \cdots + a_\ell$ with $a_1, \ldots, a_\ell \in A$ have that $a_1, \ldots, a_\ell$ are distinct and $\{a_1, \ldots, a_\ell\}$ is dissociated.
    \end{enumerate}
\end{definition}

We will now prove our main lemma for this section, analogous to \Cref{lem:robust bipartite graph for digraph}. Here the counting argument differs slightly from that of \Cref{lem:robust bipartite graph for digraph} in order to obtain better bounds on the constants involved.

\begin{lemma}\label{lem:robust bipartite graph inseparable}
    For every $\delta > 0$ and integers $b \geq 1$ and $k \geq 2$, there exist positive constants $\eps = \eps(k, b, \delta)$ and $\beta = \beta(k, b, \delta)$ such that the following holds for every Abelian group $\Gamma$ and every finite subset $A \subseteq \Gamma$ satisfying $|A^{+k}| \leq \eps|A|^k$. There exist an integer $\ell$ with $2 \leq \ell \leq k$ and two subsets $L, R \subseteq \Gamma$ satisfying:
    \stepcounter{propcounter}
    \begin{enumerate}[label = {{{\normalfont\textbf{\Alph{propcounter}\arabic{enumi}}}}}]
        \item\label{prop:right robust inseparable} Every element of $R$ is $(\delta, \ell)$-robust with respect to $A$.
        \item\label{prop:weight count inseparable} $w_\ell(R) \geq \beta|A|^{\ell}$.
        \item\label{prop:right degree inseparable} For every $z \in R$, there are at least $b$ elements $y \in L$ with $z-y \in A$.
        \item\label{prop:left degree inseparable} For every $y \in L$, there are at least $\beta |A|$ elements $z \in R$ with $z-y \in A$.
    \end{enumerate}
\end{lemma}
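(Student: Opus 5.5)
I would follow the proof of \Cref{lem:robust bipartite graph for digraph} almost verbatim, translating it to the Cayley digraph $\mathrm{Cay}(\Gamma,A)$ with root $u=0$. In that dictionary, $w_\ell(u,z)$ becomes $w_\ell(z)$ and $d$ becomes $|A|$, and the only new ingredient is the stronger robustness of \Cref{def:robust inseparable}.

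\textbf{Choice of $\ell$ and the set $Z$.} Fix $\delta\le 1$ and define constants $1=\gamma_k>\cdots>\gamma_1>\eps_1>\cdots>\eps_k$ as before. Because a proper subset $S\subseteq[\ell]$ can now have $|S|=t$ in $\binom{\ell}{t}$ ways, I would shrink the constants by an extra factor of order $2^{-k}$. Set $\eps:=\eps_k$. Let $\ell\in[k]$ be minimal such that some $Z\subseteq\Gamma$ satisfies $|Z|\le\eps_\ell|A|^\ell$ and $w_\ell(Z)\ge\gamma_\ell|A|^\ell$. Such an $\ell$ exists: take $\ell=k$ and $Z=A^{+k}$, which has $|Z|\le\eps|A|^k$ and $w_k(Z)=|A|^k$. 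Also $\ell\ge2$, since $w_1(Z)\le|Z|$ and $\eps_1<\gamma_1$.

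\textbf{Key claim: $w_\ell(\hat R)\ge(\gamma_\ell/2)|A|^\ell$.} Let $\hat R\subseteq Z$ be the set of $(\delta,\ell)$-robust elements. I bound the weight of $Z\setminus\hat R$ in two parts.
\begin{enumerate}
\item Let $B_0$ be the set of $z\in Z$ violating \ref{prop:robust dissociated inseparable}. Every tuple counted there either has $a_i=a_j$ for some $i\ne j$, or satisfies $\sum\sigma_ia_i=0$ for some nonzero $\sigma\in\{-1,0,1\}^\ell$.
\begin{itemize}
\item Repeated entries: there are at most $\ell^2|A|^{\ell-1}$ such tuples.
\item Dissociation failures with distinct entries: fix $\sigma$ and an index $j$ with $\sigma_j\neq0$. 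Then $\sigma_ja_j$ is determined by the other entries. Without $2$-torsion assumptions, $a_j$ is determined when $\sigma_j=\pm1$, since $\sigma_j a_j = \pm a_j$. This gives at most $3^\ell|A|^{\ell-1}$ such tuples.
\end{itemize}
Since $|A|\ge\eps^{-1/k}$ is large, we get $w_\ell(B_0)\le\delta^{-1}(\ell^2+3^\ell)|A|^{\ell-1}\le\gamma_\ell|A|^\ell/4$.
\item Each $z\in Z\setminus(\hat R\cup B_0)$ violates \ref{prop:robust partial sum inseparable}. Choose a witness $(S_z,y_z)$ with $w_t(y)w_{\ell-t}(z-y)\ge\delta w_\ell(z)$, where $t=|S_z|$. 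Group these $z$ into classes $B_{S,y}$. By translation, the set $\{z-y:z\in B_{S,y}\}$ has size $|B_{S,y}|$. Split each $y$ into small or large according to whether $|B_{S,y}|\le\eps_{\ell-t}|A|^{\ell-t}$.
\begin{itemize}
\item Small $y$: minimality of $\ell$ gives $w_{\ell-t}(B_{S,y}-y)\le\gamma_{\ell-t}|A|^{\ell-t}$.
\item Large $y$: there are at most $\eps_t|A|^t$ of them, so minimality gives $w_t(Y_{S,\mathrm{large}})\le\gamma_t|A|^t$.
\end{itemize}
Summing over the at most $2^\ell$ sets $S$ gives total weight at most $\delta^{-1}2^\ell(\gamma_{\ell-t}+\gamma_t)|A|^\ell\le\gamma_\ell|A|^\ell/4$.
\end{enumerate}
This counting through $t=|S|$ directly replaces \eqref{eq:organizing weight graph}.

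\textbf{Building $L$ and $R$.} Set $\hat L:=A^{+(\ell-1)}$ and form the bipartite graph $\hat H$ joining $y\in\hat L$ to $z\in\hat R$ when $z-y\in A$. Degrees in $\hat L$ are at most $|A|$, and $|\hat L|\le|A|^{\ell-1}$. The weight of $\hat R$ satisfies $w_\ell(\hat R)=\sum_y w_{\ell-1}(y)\deg_{\hat H}(y)$.
\begin{itemize}
\item Elements $y$ with $w_{\ell-1}(y)\ge2\gamma_{\ell-1}/\eps_{\ell-1}$ form a set of size at most $\eps_{\ell-1}|A|^{\ell-1}$, so by minimality they carry $(\ell-1)$-weight at most $\gamma_{\ell-1}|A|^{\ell-1}$.
\item Hence $e(\hat H)\ge3b\eps_\ell|A|^\ell$.
\end{itemize}
Now iteratively delete left vertices of degree less than $b\eps_\ell|A|$ and right vertices of degree less than $b$. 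This removes fewer than $2b\eps_\ell|A|^\ell$ edges, so a nonempty $H$ survives. Its sides $L$ and $R$ give \ref{prop:right degree inseparable} and \ref{prop:left degree inseparable}, and $w_\ell(R)\ge e(H)\ge b\eps_\ell|A|^\ell$, with $\beta:=b\eps_k$.

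\textbf{Main obstacle.} The only real work is the key claim. There one must check two things: that the subset-indexed witnesses still reduce, via translation invariance, to the minimality of $\ell$; and that the dissociation-failure count is $O(|A|^{\ell-1})$ in an arbitrary Abelian group. That count works because for $\sigma_j=\pm1$ the equation determines $a_j$ itself, not just $2a_j$.
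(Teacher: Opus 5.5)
Your proof is correct, but it takes a different route from the paper in the key claim. It is the argument of \Cref{lem:robust bipartite graph for digraph} run on $\mathrm{Cay}(\Gamma,A)$ with root $0$: translating $B_{S,y}$ by $-y$ replaces re-rooting in the minimality condition. The fact that a $\{-1,0,1\}$-relation determines any entry with coefficient $\pm1$ controls $w_\ell(B_0)$ in an arbitrary Abelian group. Grouping witnesses by $S$ is harmless but unnecessary: \ref{prop:robust partial sum inseparable} depends on $S$ only through $t=|S|$, so grouping by $t$ removes your extra $2^{-k}$ factor.

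The paper deliberately proves the key claim differently. It uses one size scale at every level: $|Z|\le\eps|A|^\ell$ with $w_\ell(Z)\ge K_\ell\eps|A|^\ell$, where $K_\ell\ge 16bK_{\ell-1}$ and $\sum_{t=1}^{\ell-1}K_tK_{\ell-t}$ is a small multiple of $\delta K_\ell$. It records a witness as $(t_z,y_z,v_z)$ with $y_z+v_z=z$, so $z\mapsto(y_z,v_z)$ is injective. It then splits $\sum_{z\in B_t}w_t(y_z)w_{\ell-t}(v_z)$ according to whether $y_z\in Y_t=\{w_t\ge 2K_t\}$, $v_z\in Y_{\ell-t}$, or neither. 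By minimality and injectivity the first two parts are at most $K_t\eps|A|^\ell$ and $K_{\ell-t}\eps|A|^\ell$, and the third is at most $4K_tK_{\ell-t}|Z|$. There is no small/large split of fibers.

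The stated payoff is better constants. Because the level-$\ell$ error enters only through the products $K_tK_{\ell-t}$, the $K_\ell$ grow like $(O(k)(\delta^{-1}+b))^\ell$. Your chain instead forces $\gamma_{\ell-1}\le 2^{-O(k)}\delta\gamma_\ell$ at every level, so $\gamma_1=(2^{-O(k)}\delta)^{k-1}$ and $\eps=(\gamma_1/6b)^k$.

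Your route has two compensating advantages. First, it never uses that \ref{prop:robust partial sum inseparable} is a product condition. That is exactly why the paper falls back on your fiber argument in \Cref{lem:robust bipartite graph additive}, where robustness is no longer multiplicative. Second, your thresholds $\gamma_\ell|A|^\ell$ do not scale with $\eps$. So the $O(\delta^{-1}3^\ell|A|^{\ell-1})$ degenerate tuples are absorbed once $|A|$ exceeds a constant depending on $k,b,\delta$, which shrinking $\eps$ guarantees through $|A|\ge\eps^{-1/k}$. The paper's thresholds $K_\ell\eps|A|^\ell$ instead need $|A|\gtrsim 3^\ell K_k/(\delta K_\ell)$ when $\eps=1/K_k$ in $(*)$. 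For $k\ge 3$ this does not follow from $|A^{+k}|\le |A|^k/K_k$, so the hypothesis constant must be taken well below $1/K_k$, which offsets part of that gain.
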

\begin{proof}
    We may assume $\delta \leq 1$. Define parameters $K_1 < K_2 < \cdots < K_k$ by 
    \[K_\ell := \frac{\delta \cdot 16^\ell(k+1)^\ell(\delta^{-1} + b)^{\ell}}{16(k + \ell)}\]
    for all $1 \leq \ell \leq k$. The exact expression for $K_\ell$ will not be used beyond the facts that $K_1 > 1$, that 
    \begin{equation}\label{eq:exponential growth inseparable}
        K_\ell \geq 16bK_{\ell-1} \qquad \text{for all $\ell \geq 2$},
    \end{equation}
    and that 
    \begin{align}
        \sum_{t=1}^{\ell-1} K_tK_{\ell-t} &= \sum_{t=1}^{\ell-1} \frac{\delta^2 \cdot 16^\ell (k+1)^{\ell}(\delta^{-1} + b)^\ell}{16^2(k+t)(k+\ell-t)} \nonumber \\
        &\leq \frac{\delta^2 \cdot 16^\ell (k+1)^{\ell}(\delta^{-1} + b)^\ell}{16^2}\frac{\ell-1}{(k+1)(k+\ell-1)} \nonumber \\
        &= \frac{\delta}{16}K_\ell\frac{(\ell-1)(k+\ell)}{(k+1)(k+\ell-1)} \nonumber \\
        &\leq \frac{\delta}{16}K_\ell
         \qquad \text{for all $\ell \geq 2$}\label{eq:key recurrence inseparable},
    \end{align}
    Define $\eps := 1/K_k$. Let $\Gamma$ be an Abelian group, and let $A \subseteq \Gamma$ be a finite subset satisfying $|A^{+k}| \leq \eps|A|^k$. By choosing a smaller constant $\eps$ if necessary, we may assume that $|A|$ is sufficiently large. Let $\ell \in [k]$ be minimum with the following property:
    \[\label{eq:star inseparable}\text{There exists $Z \subseteq \Gamma$ of size $|Z| \leq \eps |A|^{\ell}$ with $\ell$-weight $w_{\ell}(Z) \geq K_\ell \eps |A|^\ell$.}\tag{$*$}\]
Such an $\ell$ exists since $|A^{+k}| \leq \eps |A|^k$ by assumption and $w_k(A^{+k}) = |A|^k = K_k \eps |A|^k$. On the other hand, we must have $\ell \geq 2$ because $K_1 > 1$, and $w_1(Z) \leq |Z|$ for any $Z \subseteq \Gamma$. Let $Z \subseteq \Gamma$ be the subset guaranteed by~\eqref{eq:star inseparable} for this choice of $\ell$.

    Let $\hat R$ be the set of elements $z \in Z$ that are $(\delta, \ell)$-robust with respect to $A$. Our first goal is to show that at least half of the assumed $\ell$-weight of $Z$ comes from $\hat R$.
    \begin{claim}\label{cla:robust weight inseparable}
        $w_{\ell}(\hat R) \geq (K_\ell \eps/2)|A|^{\ell}$.
    \end{claim}
    \begin{subproof}
        Let $B_0$ be the set of elements $z \in Z$ such that there are at least $\delta w_{\ell}(z)$ representations of $z$ as $z = a_1 + \cdots + a_\ell$  either with $a_1, \ldots, a_\ell \in A$ non-distinct, or with $\{a_1, \ldots, a_\ell\}$ not dissociated. Since the total number of ordered $\ell$-tuples $(a_1, \ldots, a_\ell) \in A^\ell$ with $\sum_{i \in S} a_i = \sum_{j \in T} a_j$ for some disjoint pair of subsets $S,T \subseteq [\ell]$ with $S \cup T \neq \emptyset$ is at most $3^\ell|A|^{\ell-1}$, we have that
        \begin{equation}\label{eq:degenerate weight inseparable}
            w_\ell(B_0) \leq \delta^{-1} 3^{\ell}|A|^{\ell-1} \leq \frac{K_\ell \eps}{4}|A|^\ell.
        \end{equation}
        Here we used that $|A|$ is sufficiently large.
        
        Note that for every element $z \in Z \setminus (\hat R \cup B_0)$, by \Cref{def:robust inseparable}, there exist an integer $t = t_z \in [\ell-1]$ and two elements $y = y_z$, $v = v_z$ such that $y + v = z$ and $w_t(y)w_{\ell-t}(v) \geq \delta w_\ell(z)$. If more than one such triple $(t,y,v)$ exists, then we may designate one such triple as $(t_z, y_z, v_z)$ arbitrarily. Now, for each $t \in [\ell-1]$, define
        \[B_t := \{z \in Z \setminus (\hat R \cup B_0): t_z = t\},\]
        and note that $ B_1 \cup \cdots \cup B_{\ell-1}$ forms a partition of $Z \setminus (\hat R \cup B_0)$. In order to prove the claim, we will bound $w_\ell(B_t)$ for each $t \in [\ell-1]$.

    To begin, we define, for each $1 \leq t \leq \ell-1$, the set 
    \begin{equation}\label{eq:large weight elements inseparable}
        Y_t := \{y \in \Gamma : w_t(y) \geq 2K_t\},
    \end{equation}
        
        noting that $Y_t$ has size $|Y_t| < \eps|A|^t$ and $t$-weight 
        \begin{equation}\label{eq:weight of large elements inseparable}
            w_t(Y_t) < K_t \eps|A|^t
        \end{equation}
        by our choice of $\ell$ as the minimum satisfying~\eqref{eq:star inseparable}. Now fix $1 \leq t \leq \ell-1$, and consider the $\ell$-weight of $B_t$. By the definition of $B_t$, we have that
        \begin{equation}\label{eq:weight of Bt initial inseparable}
            w_{\ell}(B_t) = \sum_{z \in B_t} w_{\ell}(z) \leq \delta^{-1}\sum_{z \in B_t} w_t(y_z)w_{\ell-t}(v_z).
        \end{equation}
        Crucially, a pair $(y,v) \in \Gamma^2$ can be realized at most once as $(y_z,v_z)$ for some $z \in B_t$ since $y_z + v_z = z$ must be satisfied. Therefore, we can bound the rightmost sum of~\eqref{eq:weight of Bt initial inseparable} by
        \begin{equation}\label{eq:weight of Bt modified}
            \sum_{z \in B_t} w_t(y_z)w_{\ell-t}(v_z) \leq \sum_{\substack{y,v \in \Gamma : \\ y + v \in B_t}} w_t(y)w_{\ell-t}(v).
        \end{equation}
        By \eqref{eq:weight of large elements inseparable}, the definition of $Y_t$ and $Y_{\ell-t}$, and the fact that $|B_t| \leq |Z| \leq \eps |A|^{\ell}$ by~\eqref{eq:star inseparable}, this is at most
        \begin{align*}
        &\sum_{\substack{y \in Y_t, v \in \Gamma : \\ y + v \in B_t}} w_t(y)w_{\ell-t}(v) + \sum_{\substack{y \in \Gamma, v \in Y_{\ell-t} : \\ y + v \in B_t}} w_t(y)w_{\ell-t}(v) + \sum_{\substack{y \in \Gamma \setminus Y_t,v \in \Gamma \setminus Y_{\ell-t} : \\ y + v \in B_t}} w_t(y)w_{\ell-t}(v) \\
        &\leq w_t(Y_t)w_{\ell-t}(\Gamma) + w_t(\Gamma)w_{\ell-t}(Y_{\ell-t}) + |B_t|\left(2K_t\right)\left(2K_{\ell-t}\right) \\
        &\leq \left(K_t + K_{\ell-t} + 4K_tK_{\ell-t}\right)\eps|A|^{\ell} \\
        &\leq 16K_tK_{\ell-t}\eps|A|^{\ell}.
        \end{align*}
        Now combining this with \eqref{eq:weight of Bt initial inseparable} and \eqref{eq:weight of Bt modified} gives
        \[w_\ell(B_t) \leq 16 \delta^{-1} K_tK_{\ell-t}\eps|A|^{\ell}.\]
        
        Summing over all $1 \leq t \leq \ell-1$, we have
        \begin{align*}
            w_\ell(Z \setminus (\hat R \cup B_0)) &= \sum_{t=1}^{\ell-1}w_\ell(B_t) \\
            &\leq \sum_{t=1}^{\ell-1} 16\delta^{-1}K_tK_{\ell-t}\eps|A|^{\ell} \\
            &\leq \frac{K_\ell \eps}{4}|A|^{\ell}
        \end{align*}
        by~\eqref{eq:key recurrence inseparable}.
        Thus, by~\eqref{eq:star inseparable} and~\eqref{eq:degenerate weight inseparable},
    \[w_\ell(\hat R) = w_\ell(Z) - w_\ell(B_0) - w_\ell(Z \setminus (\hat R \cup B_0)) \geq \frac{K_\ell \eps}{2}|A|^{\ell},\]
    as claimed.
    \end{subproof}

    Now let $\hat L := A^{+(\ell-1)}$ be the set of elements $y \in \Gamma$ with positive $(\ell-1)$-weight, and note that $|\hat L| \leq |A|^{\ell-1}$. Also recall that $|\hat R| \leq |Z| \leq\eps|A|^{\ell}$ by~\eqref{eq:star inseparable}. We construct an auxiliary bipartite graph $\hat H$ on $(\hat L, \hat R)$ by joining $y \in \hat L$ to $z \in \hat R$ whenever $z-y \in A$. To show that $\hat H$ has many edges, first recall the definition of the set $Y_{\ell-1} = \{y : w_{\ell-1}(y) \geq 2K_{\ell-1}\}$ from~\eqref{eq:large weight elements inseparable} in the proof of \Cref{cla:robust weight inseparable}. We have
    \[w_\ell(\hat R) = \sum_{y \in \hat L} w_{\ell-1}(y)\deg_{\hat H}(y) \leq \sum_{y \in Y_{\ell-1}}w_{\ell-1}(y)\deg_{\hat H}(y) + \sum_{y \in \hat L \setminus Y_{\ell-1}}2K_{\ell-1}\deg_{\hat H}(y).\]
    Since $\hat H$ has maximum degree at most $|A|$, we can use~\eqref{eq:weight of large elements inseparable} to bound the first sum by
    \[\sum_{y \in Y_{\ell-1}}w_{\ell-1}(y)\deg_{\hat H}(y) \leq w_{\ell-1}(Y_{\ell-1})|A| \leq K_{\ell-1}\eps|A|^{\ell},\]
    whereas the second sum is at most $2K_{\ell-1}e(\hat H)$. Thus, by \Cref{cla:robust weight inseparable} and~\eqref{eq:exponential growth inseparable}, we have
    \begin{equation}\label{eq:edges in first graph inseparable}
        e(\hat H) \geq \frac{w_\ell(\hat R)}{2K_{\ell-1}} - \frac{\eps}{2}|A|^\ell \geq \left(\frac{K_\ell}{4K_{\ell-1}} - \frac{1}{2}\right)\eps|A|^\ell \geq 3b \eps|A|^\ell.
    \end{equation}

    Next, we iteratively delete from $\hat H$ any vertex in $\hat L$ with degree less than $b \eps|A|$ and any vertex in $\hat R$ with degree less than $b$, as long as possible. Let $H$ be the resulting graph, and observe that the number of edges deleted is less than 
    \[b \eps|A||\hat L| + b|\hat R| \leq  2b \eps|A|^{\ell},\]
    so $e(H) \geq b \eps|A|^{\ell}$ by~\eqref{eq:edges in first graph inseparable}. In particular, $H$ is nonempty, and every vertex in $L := \hat L \cap V(H)$ has degree at least~$b \eps|A|$, and every vertex in $R := \hat R \cap V(H)$ has degree at least $b$. Moreover, because every vertex in $L$ has positive $(\ell-1)$-weight, it follows that
    \[w_{\ell}(R) \geq \sum_{y \in L}w_{\ell-1}(y)\deg_H(y) \geq e(H) \geq b \eps |A|^{\ell}.\]
    Thus~\ref{prop:right robust inseparable}--\ref{prop:left degree inseparable} are satisfied with~$\beta := b \eps$.
\end{proof}
We are now ready to prove \Cref{thm:inseparable small growth}.
\begin{proof}[Proof of \Cref{thm:inseparable small growth}]
Choose $\delta := 18^{-k-1}$ and $b := 3^{2k}$. Let $\eps = \eps(k,b,\delta)$ and $\beta = \beta(k,b,\delta)$ be the constants given by \Cref{lem:robust bipartite graph inseparable}. Let $\Gamma$ be an Abelian group, and let $A$ be a subset of $\Gamma$ with $|A^{+k}| \leq \eps|A|^k$. By choosing a smaller constant $\eps$ if necessary, we may assume that $|A|$ is sufficiently large. By \Cref{lem:robust bipartite graph inseparable}, there exist an integer $2 \leq \ell \leq k$ and two subsets $L,R \subseteq \Gamma$ satisfying~\ref{prop:right robust inseparable}--\ref{prop:left degree inseparable}.

We attempt to construct many inseparable solutions to~\eqref{eq:equal k-sums} in $A$ as follows. Initialize $U := \emptyset$.
\begin{enumerate}
    \item Choose distinct elements $a_1, \ldots, a_\ell \in A$ so that $\{a_1, \ldots, a_{\ell}\}$ is dissociated and so that $z_0 := a_1 + \cdots + a_\ell \in R$. Update $U \leftarrow \{a_1, \ldots, a_\ell\}$.
    \item For $1 \leq i \leq k-\ell$, in turn, choose an element $a_{k+\ell+i} \in A \setminus \spn^{\pm}(U)$ so that $y_i := z_{i-1} - a_{k+\ell+i} \in L$, then choose an element $a_{\ell+i} \in A \setminus \spn^{\pm}(U \cup \{a_{k+\ell+i}\})$ so that $z_i := y_i + a_{\ell+i} \in R$. Update $U \leftarrow U \cup \{a_{\ell+i}, a_{k+\ell+i}\}$.
    \item Choose distinct elements $a_{k+1}, \ldots, a_{k+\ell} \in A$ so that $\{a_{k+1}, \ldots, a_{k+\ell}\}$ is dissociated, so that $a_{k+1} + \cdots + a_{k+\ell} = z_{k-\ell}$, and so that
    \[\sum_{j \in T} a_{k+j} \notin \spn^{\pm}(U)\]
    for every proper nonempty subset $T \subseteq [\ell]$.
\end{enumerate}

First, we will verify that $(x_1, \ldots, x_{2k}) = (a_1, \ldots, a_{2k})$ produced in this way is an inseparable solution to~\eqref{eq:equal k-sums}. By design, we have that
\[\sum_{i=1}^\ell a_i + \sum_{i=1}^{k-\ell} (a_{k+\ell+i}-a_{\ell+i}) = z_{k-\ell} = \sum_{i=1}^\ell a_{k+i},\]
which can be rearranged as
\[a_1 + \cdots + a_k = a_{k+1} + \cdots + a_{2k},\]
so $(x_1, \ldots, x_{2k}) = (a_1, \ldots, a_{2k})$ satisfies~\eqref{eq:equal k-sums}. We must now show that this solution is inseparable.

For convenience, define $U_0 = \{a_i : i \in [k] \cup [k+\ell+1,2k]\}$ to be the value of $U$ at the beginning of Step~$3$. By Steps $1$ and $2$, we have that $a_1, \ldots, a_k, a_{k+\ell+1}, \ldots, a_{2k}$ are distinct (and in fact, $U_0$ is dissociated), and Step $3$ ensures that $a_{k+1}, \ldots, a_{k+\ell}$ are distinct from each other and from all elements of $U_0$. Thus~\ref{prop:distinct generic inseparable} is satisfied.

To verify~\ref{prop:different subset sums generic inseparable}, suppose we have two subsets $S,T \subseteq [k]$ with $\sum_{i \in S} a_i = \sum_{j \in T} a_{k+j}$. Let $T_0 := T \cap [\ell+1,k]$, and let $T_1 := T \setminus T_0$. Then
\[\sum_{j \in T_1} a_{k+j} = \sum_{i \in S} a_i - \sum_{j \in T_0} a_{k+j} \in \spn^{\pm}(U_0),\]
so Step 3 ensures that $T_1 = \emptyset$ or $T_1 = [\ell]$. If $T_1 = \emptyset$, then because $U_0$ is dissociated by Steps~$1$ and~$2$, it follows that $S = T_0 = \emptyset$ as well, and thus $S = T = \emptyset$. If $T_1 = [\ell]$, then
\[\sum_{i \in S} a_i - \sum_{j \in T_0} a_{k+j} = \sum_{j=1}^{\ell} a_{k+j} = \sum_{i=1}^k a_i - \sum_{j=\ell+1}^{k}a_{k+j}.\]
Canceling like terms from both sides gives
\[\sum_{i\in[k] \setminus S} a_i = \sum_{j\in [\ell+1,k] \setminus T_0}a_{k+j}.\]
Again, since $U_0$ is dissociated, it follows that $S = [k]$ and $T_0 = [\ell+1,k]$, and hence $S = T = [k]$. Thus~\ref{prop:different subset sums generic inseparable} holds.

    Now let us ensure that this process can be carried out and count the number of choices we have at each of these steps.
    
    \textbf{Step 1.} By~\ref{prop:right robust inseparable}, every element in $R$ is $(\delta,\ell)$-robust, so by~\ref{prop:robust dissociated inseparable}, there are at least $(1-\delta)w_{\ell}(R)$ ordered $\ell$-tuples $(a_1, \ldots, a_\ell) \in A^\ell$ with $a_1, \ldots, a_\ell$ distinct, with $\{a_1, \ldots, a_\ell\}$ dissociated, and with $a_1 + \cdots + a_\ell \in R$. By~\ref{prop:weight count inseparable}, this accounts for at least 
    \begin{equation}\label{eq:step 1 count inseparable}
        (1-\delta)\beta |A|^\ell \geq \frac{\beta}{2} |A|^{\ell}
    \end{equation}
    choices for $a_1, \ldots, a_\ell$ in Step~$1$. 

    \textbf{Step 2.} Fix $1 \leq i \leq k-\ell$. Suppose we have already chosen $a_{\ell+1}, \ldots, a_{\ell+i-1}, a_{k+\ell+1}, \ldots, a_{k+\ell+i-1}$ (and in particular have defined $z_{i-1} \in R$), and we wish to choose $a_{k+\ell+i}$ and $a_{\ell+i}$. Since $z_{i-1} \in R$, we have by~\ref{prop:right degree inseparable} that there are at least $b = 3^{2k}$ choices of $y_i \in L$ such that $a_{k+\ell+i} := z_{i-1} - y_i \in A$. Of these, at most $3^{|U|} < b$ are in $\spn^{\pm}(U)$, leaving us at least one good choice for $a_{k+\ell+i}$. Similarly, by~\ref{prop:left degree inseparable}, there are at least $\beta |A| - 3^{2k}$ good choices for $a_{\ell+i}$. Altogether, this gives at least
    \begin{equation}\label{eq:step 2 count inseparable}
        \left(\beta |A| - 3^{2k}\right)^{k-\ell} \geq \left(\frac{\beta}{2}\right)^{k-\ell}|A|^{k-\ell}
    \end{equation}
    choices for $a_{\ell+1}, \ldots, a_k, a_{k+\ell+1}, \ldots, a_{2k}$ in Step 2.

    \textbf{Step 3.} Since $z_{k-\ell} \in R$, we have by~\ref{prop:right robust inseparable} that $z_{k-\ell}$ is $(\delta, \ell)$-robust. Therefore, by~\ref{prop:robust partial sum inseparable}, less than $2^{\ell}3^{|U|}\delta w_\ell(z_{k-\ell}) \leq w_\ell(z_{k-\ell})/2$ representations of $z_{k-\ell}$ as $z_{k-\ell} = a_{k+1} + \cdots + a_{k+\ell}$ have $\sum_{i \in S} a_{k+i} \in \spn^{\pm}(U)$ for some proper nonempty subset $S \subseteq [\ell]$. Also, by~\ref{prop:robust dissociated inseparable}, less than $\delta w_\ell(z_{k-\ell}) \leq w_\ell(z_{k-\ell})/2$ such representations have $a_{k+1}, \ldots, a_{k+\ell}$ not distinct or have $\{a_{k+1}, \ldots, a_{k+\ell}\}$ not dissociated. This leaves a positive number of choices for $a_{k+1}, \ldots, a_{k+\ell}$ in Step 3.

    Now putting it altogether, we have by~\eqref{eq:step 1 count inseparable} and~\eqref{eq:step 2 count inseparable} that there are at least 
    \[\frac{\beta}{2} |A|^{\ell} \cdot \left(\frac{\beta}{2}\right)^{k-\ell}|A|^{k-\ell} = \left(\frac{\beta}{2}\right)^{k-\ell+1}|A|^k\]
    ways to choose an inseparable solution $(x_1, \ldots, x_{2k}) = (a_1, \ldots, a_{2k})$ to~\eqref{eq:equal k-sums} using Steps 1--3.
\end{proof}

\section{Additively generic solutions}\label{sec:additively generic}

In this section, we finish the proof of \Cref{thm:additively generic integers} and \Cref{thm:inseparable and additively generic finite group} by proving \Cref{thm:truly additively generic small growth}, which finds many additively generic solutions to~\eqref{eq:equal k-sums} in finite subsets of $2$-torsion-free Abelian groups with small $k$-wise sumsets. We will follow the same proof template that we established in \Cref{sec:graphs} for graphs and employed in \Cref{sec:inseparable} for inseparable solutions to~\eqref{eq:equal k-sums}. Though the overall approach is the same, the details are a fair bit more technical now that we require solutions to~\eqref{eq:equal k-sums} that are much more generic. In particular, our proof will require more variations of the weight function which account for \emph{signed} sums of elements in our finite subset $A \subseteq \Gamma$.

\begin{definition}\label{def:weight additive}
Given an abelian group $\Gamma$, a finite subset $A \subseteq \Gamma$, a positive integer $\ell$, and a sequence $\bar{\sigma} = (\sigma_1, \ldots, \sigma_\ell) \in \{\pm 1\}^\ell$ of signs, the \emph{$\bar{\sigma}$-weight} of an element $z \in \Gamma$, denoted $w_{\bar{\sigma}}(z)$, is the number of ordered $\ell$-tuples $(a_1, \ldots, a_\ell) \in A^\ell$ with $\sum_{i=1}^\ell \sigma_i a_i = z$. Similarly, if $Z \subseteq \Gamma$ is a set of elements, then we define the \emph{$\bar{\sigma}$-weight} of $Z$ as
\[w_{\bar{\sigma}}(Z) := \sum_{z \in Z} w_{\bar{\sigma}}(z).\]
\end{definition}

The robustness condition that we require must also be strengthened in order to avoid all types of additive collisions in our solutions to~\eqref{eq:equal k-sums}.

\begin{definition}\label{def:robust additive}
    Let $\Gamma$ be an Abelian group, let $A$ be a fixed finite subset of $\Gamma$, let $\ell$ be a positive integer, let $\bar \sigma = (\sigma_1, \ldots, \sigma_\ell) \in \{\pm 1\}^\ell$, and let $\delta > 0$. We say that an element $z \in \Gamma$ is \emph{$(\delta,\bar \sigma)$-robust} with respect to $A$ if, for every $y \in \Gamma$ and every nonzero $\bar \tau = (\tau_1, \ldots, \tau_\ell) \in \{-1, 0, 1\}^\ell$ which is not $\pm \bar \sigma$, less than a $\delta$-proportion of the representations of $z$ as $z = \sigma_1 a_1 + \cdots + \sigma_{\ell}a_\ell$ with $a_1, \ldots, a_\ell \in A$ have $\sum_{i=1}^\ell \tau_i a_i = y$; that is, the number of solutions $(x_1, \ldots, x_\ell) \in A^{\ell}$ to the system
    \[\left\{\begin{array}{l l}
    \sum_{i=1}^\ell \sigma_i x_i = z \\
    \sum_{i=1}^\ell \tau_i x_i = y
    \end{array}\right.\]
    is less than $\delta w_{\bar \sigma}(z)$.
\end{definition}
We now prove our main lemma, analogous to \Cref{lem:robust bipartite graph inseparable}. The main challenge here compared to previous sections is to make the counting argument work that finds a small set of robust elements with large weight. One convenient aspect of the previous robustness definitions was that they could be expressed in terms of a multiplicative condition on weights of lower complexity (see~\ref{prop:robust vertex} and~\ref{prop:robust partial sum inseparable}). Since \Cref{def:robust additive} cannot be expressed this way, we will need a much more careful argument.

\begin{lemma}\label{lem:robust bipartite graph additive}
    For every $\delta > 0$ and integers $b \geq 1$ and $k \geq 2$, there exist positive constants $\eps = \eps(k, b, \delta)$ and $\beta = \beta(k, b, \delta)$ such that the following holds for every Abelian group $\Gamma$ with no $2$-torsion and every finite subset $A \subseteq \Gamma$ satisfying $|A^{+k}| \leq \eps|A|^k$. There exist an integer $\ell$ with $2 \leq \ell \leq k$, a sequence $\bar \sigma = (\sigma_1, \ldots, \sigma_{\ell}) \in \{\pm 1\}^\ell$, and two subsets $L, R \subseteq \Gamma$ satisfying:
    \stepcounter{propcounter}
    \begin{enumerate}[label = {{{\normalfont\textbf{\Alph{propcounter}\arabic{enumi}}}}}]
        \item\label{prop:right robust additive} Every element of $R$ is $(\delta, \bar \sigma)$-robust with respect to $A$.
        \item\label{prop:weight count additive} $w_{\bar \sigma}(R) \geq \beta|A|^{\ell}$.
        \item\label{prop:right degree additive} For every $z \in R$, there are at least $b$ elements $y \in L$ with $z-y \in \sigma_{\ell}A$.
        \item\label{prop:left degree additive} For every $y \in L$, there are at least $\beta |A|$ elements $z \in R$ with $z-y \in \sigma_{\ell}A$.
    \end{enumerate}
\end{lemma}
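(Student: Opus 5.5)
We follow the template of \Cref{lem:robust bipartite graph inseparable}, now indexing the extremal parameter by sign patterns as well as lengths. We fix rapidly growing constants $K_1<\cdots<K_k$ (growth to be tuned), set $\eps:=1/K_k$, and assume $|A|$ is large. We choose $\ell\in[k]$ minimal such that there exist some $\bar\sigma\in\{\pm1\}^\ell$ and $Z\subseteq\Gamma$ with $|Z|\le\eps|A|^\ell$ and $w_{\bar\sigma}(Z)\ge K_\ell\eps|A|^\ell$. The all-ones pattern with $\ell=k$ and $Z=A^{+k}$ shows such an $\ell$ exists. The bound $w_{\bar\sigma}(Z)\le|Z|$ for $\ell=1$ forces $\ell\ge2$.

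Minimality gives, for every $t<\ell$ and every $\bar\rho\in\{\pm1\}^t$, that $Y_{\bar\rho}:=\{y:w_{\bar\rho}(y)\ge 2K_t\}$ has size less than $\eps|A|^t$ and $w_{\bar\rho}(Y_{\bar\rho})<K_t\eps|A|^t$. The reason is that $Y_{\bar\rho}$ has weight at least $2K_t|Y_{\bar\rho}|$, so a large $Y_{\bar\rho}$ could be truncated to a violating set. Once $\hat R$, the set of $(\delta,\bar\sigma)$-robust elements of $Z$, is shown to carry half of $w_{\bar\sigma}(Z)$, the rest copies the earlier proof verbatim. We let $\hat L$ be the set of elements of positive $(\sigma_1,\dots,\sigma_{\ell-1})$-weight and join $y\in\hat L$ to $z\in\hat R$ when $z-y\in\sigma_\ell A$. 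The resulting graph $\hat H$ has many edges, because $w_{\bar\sigma}(\hat R)=\sum_y w_{\bar\sigma'}(y)\deg(y)$ and the heavy vertices $Y_{\bar\sigma'}$ contribute little. Pruning vertices of low degree then yields $L,R$ satisfying \ref{prop:right robust additive}--\ref{prop:left degree additive} with $\beta=b\eps$.

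The main obstacle is the robust-weight claim, since non-robustness is no longer a product of lower weights. We fix a non-robust $z\in Z$, with a witness $\bar\tau\ne0,\pm\bar\sigma$ and $y$, and split into cases. If $\tau$ agrees with $\sigma$ or $-\sigma$ on its support $S$, and $S$ is proper, then $\sum_{i\in S}\sigma_ia_i=\pm y$ and $\sum_{i\notin S}\sigma_ia_i=z\mp y$. The count is therefore at most $w_{\bar\sigma|_S}(\pm y)\,w_{\bar\sigma|_{S^c}}(z\mp y)$, and the earlier argument applies unchanged, using $\sum_t K_tK_{\ell-t}\le(\delta/16)K_\ell$ summed over the $2^\ell3^\ell$ choices of $(S,\text{sign})$. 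In the remaining case $\tau$ agrees with $\sigma$ on a nonempty $P$ and with $-\sigma$ on a nonempty $Q$. Adding and subtracting the two equations gives $2\sum_{P}\sigma_ia_i+\sum_{S^c}\sigma_ia_i=z+y$ and $2\sum_Q\sigma_ia_i+\sum_{S^c}\sigma_ia_i=z-y$. We then fix the $S^c$ and $Q$ variables, at most $|A|^{|S^c|+|Q|}$ choices. Because $\Gamma$ has no $2$-torsion, $\sum_P\sigma_ia_i$ is then determined, and the number of completions is at most $\max_x w_{\bar\sigma|_P}(x)$. If $P\ne[\ell]$, applying this symmetrically bounds the count by roughly $\min\{\max w_{\sigma_P},\max w_{\sigma_Q}\}\cdot|A|^{\ell-|P|-|Q|}\cdots$. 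We would try to make this rigorous by bounding the total weight of elements $z$ for which this count is at least $\delta w_{\bar\sigma}(z)$: summing over pairs $(z,y)$ one gets a sum of products of lower weights evaluated at $(z\pm y)/2$-type points. Since $(z,y)\mapsto(z+y,z-y)$ is injective in a $2$-torsion-free group, this sum plays the role that the injectivity of $(y,v)\mapsto y+v$ played before. It can then be split along heavy and light elements, using the sets $Y_{\bar\rho}$ for the sub-patterns on $P\cup S^c$ and $Q\cup S^c$, to give a bound $O(\delta^{-1}K_tK_{\ell-t}\eps|A|^\ell)$.

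Degenerate tuples contribute at most $O(5^\ell|A|^{\ell-1})$ in total and are absorbed as before. Making the mixed-sign counting match exactly the needed recurrence for the $K_t$ is where most of the care will go, and we would adjust the growth of the $K_t$ (for instance $K_\ell\ge C(k,\delta,b)\,K_{\ell-1}\max_t K_t K_{\ell-t}$) to absorb the constants.
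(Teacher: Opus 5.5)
Your surrounding structure is fine: the minimal $\ell$ over all sign patterns, the heavy sets $Y_{\bar\rho}$, the graph $\hat H$ and the pruning all work. Two of your cases also go through: witnesses agreeing with $\pm\bar\sigma$ on a proper support, and mixed-sign witnesses of full support. In the latter, adding, subtracting and halving (using no $2$-torsion) turns the count into $w_{\bar\sigma|_P}(p)\,w_{\bar\sigma|_Q}(q)$ with $p+q=z$.

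The gap is the remaining case. Here $\bar\tau$ has proper support $S=P\cup Q$ with both $P$ and $Q$ nonempty, e.g.\ $\ell=3$, $\bar\sigma=(1,1,1)$, $\bar\tau=(1,-1,0)$. Let $p,q,s$ be the $\bar\sigma$-signed sums over $P,Q,S^c$. The number of representations is then
\[\sum_{q} w_{\bar\sigma|_P}(y+q)\,w_{\bar\sigma|_Q}(q)\,w_{\bar\sigma|_{S^c}}(z-y-2q).\]
This is a convolution with a free summation variable, not a product of lower weights at two points determined by $(z,y)$. Injectivity of $(z,y)\mapsto(z+y,z-y)$ therefore buys nothing. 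Splitting $p,q,s$ into heavy and light values loses the factor coming from the free variable $q$. Your proposal concedes this case is unfinished, but it is exactly where the new difficulty of the lemma lies.

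The missing idea is a fiber argument that never needs a product formula.
\begin{itemize}
\item Use $2$-torsion-freeness only to assume the witness $\bar\tau_z$ has proper support, say $[t]$; your full-support computation is exactly this reduction.
\item For fixed $\bar\tau$, group the pairs $(z,y_z)$ by $y$ into fibers $P_y$.
\item If $|P_y|>\eps_{\ell-t}|A|^{\ell-t}$, there are at most $|Z|/(\eps_{\ell-t}|A|^{\ell-t})\le\eps_t|A|^t$ such $y$. Minimality at level $t$, applied to the pattern $(\tau_1,\dots,\tau_t)$, bounds their total $\bar\tau$-weight by $\gamma_t|A|^t$. Each $(a_1,\dots,a_t)$ then has at most $|A|^{\ell-t}$ completions.
\item If $P_y$ is small, fix $(a_1,\dots,a_t)$ with $\sum_{i\le t}\tau_ia_i=y$. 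Its completions are counted by the $(\sigma_{t+1},\dots,\sigma_\ell)$-weight of the translate $\{z-\sum_{i\le t}\sigma_ia_i:(z,y)\in P_y\}$. This set has size at most $\eps_{\ell-t}|A|^{\ell-t}$, so minimality at level $\ell-t$ bounds the count by $\gamma_{\ell-t}|A|^{\ell-t}$.
\end{itemize}
This handles every proper support uniformly, so your case split becomes unnecessary.

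It does rely on level-dependent size thresholds satisfying $\eps_\ell/\eps_{\ell-t}=\eps_t$. The paper uses $|Z|\le\eps_\ell|A|^\ell$ and $w_{\bar\sigma}(Z)\ge\gamma_\ell|A|^\ell$ with $\eps_\ell=(\gamma_1/6b)^\ell$. With the single threshold $\eps$ you carried over from the inseparable lemma, this bookkeeping breaks. You would need the large-fiber $y$'s to number fewer than $\eps|A|^t$ and the small fibers to have size at most $\eps|A|^{\ell-t}$. Together these require $|Z|\le\eps^2|A|^\ell$, whereas you only know $|Z|\le\eps|A|^\ell$. So the normalization has to change as well.
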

\begin{proof}
    We may assume $\delta \leq 1$. For $1 \leq \ell \leq k$, define constants $1 \geq \gamma_k > \cdots > \gamma_1 > \eps_1 > \cdots > \eps_k > 0$ by $\gamma_\ell := (\delta/(4 \cdot 3^k))^{k-\ell}$ and $\eps_\ell := (\gamma_1/(6b))^{\ell}$.  Define $\eps := \eps_k$. Let $\Gamma$ be an Abelian group with no $2$-torsion, and let $A \subseteq \Gamma$ be a finite subset of $\Gamma$ satisfying $|A^{+k}| \leq \eps |A|^k$. By choosing a smaller constant $\eps$ if necessary, we may assume that $|A|$ is sufficiently large. Let $\ell \in [k]$ be minimum with the following property:
    \[\label{eq:star additive}\text{There exists $Z \subseteq \Gamma$ of size $|Z| \leq \eps_\ell|A|^{\ell} $ with $\bar \sigma$-weight $w_{\bar \sigma}(Z) \geq \gamma_\ell |A|^\ell$ for some $\bar \sigma \in \{\pm 1\}^\ell$}.\tag{$*$}\]
    Such an $\ell$ exists since $|A^{+k}| \leq \eps |A|^k$ by assumption and $w_{\bar \sigma}(A^{+k}) = |A|^k = \gamma_k |A|^k$ for $\bar \sigma = (1, \ldots, 1)$.
    On the other hand, we must have $\ell \geq 2$ because $\eps_1 < \gamma_1$, and the $(\sigma)$-weight of any set $Z \subseteq \Gamma$ is at most $|Z|$ for any $\sigma \in \{\pm 1\}$. Let $Z \subseteq \Gamma$ and $\bar \sigma = (\sigma_1, \ldots, \sigma_\ell) \in \{\pm 1\}^\ell$ be the set and sequence guaranteed by~\eqref{eq:star additive} for this choice of $\ell$.

    Let $\hat R$ be the set of elements of $Z$ which are $(\delta, \bar \sigma)$-robust with respect to $A$. Our first goal is to show that at least half of the assumed $\bar \sigma$-weight of $Z$ comes from $\hat R$.
    \begin{claim}\label{cla:robust weight additive}
        $w_{\bar \sigma}(\hat R) \geq (\gamma_\ell/2)|A|^{\ell}$.
    \end{claim}
    \begin{subproof}

    Note that by \Cref{def:robust additive}, for every element $z \in Z \setminus \hat R$, there exists a nonzero sequence of coefficients $\bar \tau = \bar \tau_z = (\tau_1, \ldots, \tau_\ell) \in \{-1, 0, 1\}^\ell$ which is not $\pm \bar \sigma$, and an element $y = y_z \in \Gamma$ such that there are at least $\delta w_{\bar \sigma}(z)$ solutions $(x_1, \ldots, x_\ell) \in A^\ell$ to 
        \[\left\{\begin{array}{l l}
    \sum_{i=1}^\ell \sigma_i x_i = z \\
    \sum_{i=1}^\ell \tau_i x_i = y.
    \end{array}\right.\]
    If more than one such pair $(\bar \tau, y)$ exists, then we may designate one such pair as $(\bar \tau_z, y_z)$ in such a way that $\bar \tau_z$ is supported on a proper nonempty subset of $[\ell]$. This is because $\Gamma$ has no $2$-torsion, so whenever $\bar \tau \in \{\pm 1\}^\ell$, the two derived equations $\sum_{i=1}^\ell (\sigma_i + \tau_i)x_i = z + y$ and $\sum_{i=1}^\ell (\sigma_i - \tau_i)x_i = z - y$ can be reduced to $\sum_{i=1}^\ell \frac{\sigma_i + \tau_i}{2}x_i = y'$ and $\sum_{i=1}^\ell \frac{\sigma_i - \tau_i}{2}x_i = y''$, respectively for some $y',y'' \in \Gamma$. Moreover, we have that $\frac{\sigma_i + \tau_i}{2}, \frac{\sigma_i - \tau_i}{2} \in \{-1, 0, 1\}$ for all $i \in [\ell]$, and there exists some $i \in [\ell]$ such that $\frac{\sigma_i + \tau_i}{2} = 0$ or $\frac{\sigma_i - \tau_i}{2} = 0$.

     Now, for each $\bar \tau \in \{-1,0,1\}^\ell$ supported on a proper nonempty subset of $[\ell]$, define
        \[B_{\bar \tau} := \{z \in Z \setminus \hat R : \bar \tau_z = \bar \tau\},\]
        and note that the sets $B_{\bar \tau}$ form a partition of $Z \setminus \hat R$. In order to prove the claim, we will bound the $\bar \sigma$-weight of each part.

        To that end, fix such a sequence $\bar \tau = (\tau_1, \ldots, \tau_\ell)$. For simplicity, we may assume that the support of~$\bar \tau$ is~$[t]$ for some $1 \leq t \leq \ell-1$. We will abuse notation and write $\bar \tau$ for $(\tau_1, \ldots, \tau_t) \in \{\pm 1\}^t$. Define a set $P \subseteq B_{\bar \tau} \times \Gamma$ consisting of all pairs of the form $(z,y_z)$ for some $z \in B_{\bar \tau}$. Further define, for each $P' \subseteq P$, the quantity $w_{\bar \sigma, \bar \tau}(P')$ to be the number of sequences $(a_1, \ldots , a_\ell) \in A^\ell$ with $(\sum_{i=1}^\ell \sigma_i a_i, \sum_{i=1}^t \tau_i a_i) \in P'$. Note that
    \begin{equation}\label{eq:weighted point set additive}
        w_{\bar \sigma, \bar \tau}(P) = \sum_{z \in B_{\bar \tau}} \left|\left\{(a_1, \ldots, a_\ell) \in A^\ell : \sum_{i=1}^\ell \sigma_i a_i = z, \sum_{i=1}^t \tau_i a_i = y_z\right\}\right|\geq \delta w_{\bar \sigma}(B_{\bar \tau})
    \end{equation}
    by how we chose $y_z$ for each $z \in B_{\bar \tau}$. Therefore, it will be enough for us to bound $w_{\bar \sigma, \bar \tau}(P)$, which we do ``fiber by fiber'', as follows. For each $y \in \Gamma$, consider the fiber
    \[P_y := \{(z,y) \in P : y_z = y\},\]
    and observe that these fibers partition $P$. We now differentiate the fibers by their size. Define
    \begin{align*}
        Y_{\mathrm{small}} &:= \{y \in \Gamma : |P_y| \leq \eps_{\ell-t}|A|^{\ell-t}\}, \\
        Y_{\mathrm{large}} &:= \{y \in \Gamma : |P_y| > \eps_{\ell-t}|A|^{\ell-t}\}.
    \end{align*}
    It will also be convenient before we move forward to define the \emph{potential} of a sequence $(a_1, \ldots, a_t) \in A^t$, denoted by $\mathrm{pot}(a_1, \ldots, a_t)$, to be the number of sequences $(a_{t+1}, \ldots, a_\ell)$ such that $\left(\sum_{i=1}^\ell \sigma_i a_i, \sum_{i=1}^t \tau_i a_i\right) \in P$. Note that the potential of any sequence is trivially at most $|A|^{\ell-t}$, and that
    \begin{equation*}
        w_{\bar \sigma, \bar \tau}(P) = \sum_{y \in \Gamma} w_{\bar \sigma, \bar \tau}(P_y) =  \sum_{y \in \Gamma} \sum_{\substack{(a_1, \ldots, a_t) \in A^t : \\ \sum_{i=1}^t\tau_i a_i = y}}\mathrm{pot}(a_1, \ldots, a_t).
    \end{equation*}

    We handle large fibers first.
    Note that $|P| = |B_{\bar \tau}| \leq |Z| \leq \eps_\ell|A|^{\ell}$ by~\eqref{eq:star additive}, so 
    \[|Y_{\mathrm{large}}| \leq \frac{|P|}{\eps_{\ell-t}|A|^{\ell-t}} \leq \frac{\eps_\ell}{\eps_{\ell-t}}|A|^t = \eps_t|A|^t.\]
    It then follows by our choice of $\ell$ as the minimum satisfying~\eqref{eq:star additive} that $w_{\bar \tau}(Y_{\mathrm{large}}) \leq \gamma_t|A|^t$. Therefore, the contribution of these large fibers to $w_{\bar \sigma, \bar \tau}(P)$ is at most
    \begin{align}
        \sum_{y \in Y_{\mathrm{large}}} w_{\bar \sigma, \bar \tau}(P_y) &= \sum_{y \in Y_{\mathrm{large}}}\sum_{\substack{(a_1, \ldots, a_t) \in A^t : \\ \sum_{i=1}^t\tau_i a_i = y}}\mathrm{pot}(a_1, \ldots, a_t) \nonumber \\
        &\leq \sum_{y \in Y_{\mathrm{large}}}w_{\bar \tau}(y)|A|^{\ell-t} \nonumber \\
        &= w_{\bar\tau}(Y_{\mathrm{large}})|A|^{\ell-t} \nonumber \\
        &\leq \gamma_t|A|^\ell. \label{eq:large fibers additive}
    \end{align}

    Next, we consider the small fibers. Fix $y \in Y_{\mathrm{small}}$ and $\bar a = (a_1, \ldots, a_t) \in A^t$ with $\sum_{i=1}^t \tau_i a_i = y$. Define 
    \begin{align*}
        V_{\bar a} &:= \left\{z - \sum_{i=1}^t\sigma_ia_i : (z,y) \in P_y\right\}, \\
        \bar \sigma' &:= (\sigma_{t+1}, \ldots, \sigma_\ell),
    \end{align*}
    and note that we can write the potential of $\bar a$ as
    \begin{align*}\mathrm{pot}(a_1, \ldots, a_t) &= \left|\left\{(a_{t+1}, \ldots, a_\ell) \in A^{\ell-t} : \left(\sum_{i=1}^\ell \sigma_i a_i, y\right) \in P_y\right\}\right| \\
    &= \left|\left\{(a_{t+1}, \ldots, a_\ell) \in A^{\ell-t} : \sum_{i=t+1}^\ell \sigma_i a_i \in V_{\bar a}\right\}\right| \\
    &= w_{\bar \sigma'}(V_{\bar a}).
    \end{align*}
    Now since $|V_{\bar a}| = |P_y| \leq \eps_{\ell-t}|A|^{\ell-t}$ since $y \in Y_{\mathrm{small}}$, we again have by the minimality of $\ell$ satisfying~\eqref{eq:star additive} that 
    \[\mathrm{pot}(\bar a) = w_{\bar \sigma'}(V_{\bar a}) \leq \gamma_{\ell-t}|A|^{\ell-t}.\]
    Now summing over all $(a_1,\ldots, a_t) \in A^t$ with $\sum_{i=1}^t \tau_i a_i \in Y_{\mathrm{small}}$ gives
    \begin{align*}
        \sum_{y \in Y_{\mathrm{small}}} w_{\bar \sigma, \bar \tau}(P_y) &= \sum_{\substack{(a_1, \ldots, a_t) \in A^t : \\ \sum_{i=1}^t\tau_i a_i \in Y_{\mathrm{small}}}}\mathrm{pot}(a_1, \ldots, a_t) \\
        &\leq \sum_{\substack{(a_1, \ldots, a_t) \in A^t : \\ \sum_{i=1}^t\tau_i a_i \in Y_{\mathrm{small}}}} \gamma_{\ell-t}|A|^{\ell-t} \\
        &\leq \gamma_{\ell-t}|A|^\ell.
    \end{align*}

    Combining this with~\eqref{eq:weighted point set additive} and~\eqref{eq:large fibers additive} gives
    \begin{align*}
        w_{\bar \sigma}(B_{\bar \tau}) &\leq \delta^{-1}w_{\bar \sigma, \bar \tau}(P) \\
        &= \delta^{-1} \left(\sum_{y \in Y_{\mathrm{large}}}w_{\bar \sigma, \bar \tau}(P_y) + \sum_{y \in Y_{\mathrm{small}}}w_{\bar \sigma, \bar \tau}(P_y) \right) \\
        &\leq \delta^{-1}(\gamma_t + \gamma_{\ell-t})|A|^\ell \\
        &\leq 2\delta^{-1}\gamma_{\ell-1}|A|^\ell
    \end{align*}
    since $\gamma_1 \leq \cdots \leq \gamma_{\ell-1}$.

    Finally, we can sum over all possible choices of $\bar \tau \in \{-1,0,1\}^\ell$ supported on a nonempty proper subset of~$[\ell]$ to obtain
    \[w_{\bar \sigma}(\hat R) = w_{\bar \sigma}(Z) - \sum_{\bar \tau}w_{\bar \sigma}(B_{\bar \tau}) \geq \gamma_\ell |A|^{\ell} - 3^{\ell}(2\delta^{-1}\gamma_{\ell-1})|A|^{\ell} \geq \frac{\gamma_{\ell}}{2}|A|^{\ell}\]
    by~\eqref{eq:star additive}, as claimed. 
    \end{subproof}
    
    Now let $\hat L$ be the set of elements $y \in \Gamma$ with positive $\bar \sigma'$-weight for $\bar \sigma' := (\sigma_1, \ldots, \sigma_{\ell-1})$ (that is, $\hat L$ is the $(\ell-1)$-wise sumset $\sigma_1A + \cdots + \sigma_{\ell-1}A$), and note that $|\hat L| \leq |A|^{\ell-1}$. Also recall that $|\hat R| \leq |Z| \leq \eps_\ell|A|^{\ell}$ by~\eqref{eq:star additive}. We construct an auxiliary bipartite graph $\hat H$ on $(\hat L, \hat R)$ by joining $y \in \hat L$ to $z \in \hat R$ whenever $z - y \in \sigma_{\ell}A$. 
    
    We want to give a lower bound on the number of edges of $\hat H$. To that end, let $Y := \{y \in \hat L : w_{\bar \sigma'}(y) \geq 2\gamma_{\ell-1}/\eps_{\ell-1}\}$, and note that $Y$ cannot be larger than $\eps_{\ell-1}|A|^{\ell-1}$ by our choice of $\ell$ as the smallest satisfying~\eqref{eq:star additive}. Consequently, $w_{\bar \sigma'}(Y) \leq \gamma_{\ell-1}|A|^{\ell-1}$, also by our choice of $\ell$. Therefore, since $\hat H$ has maximum degree at most $|A|$, we have
    \begin{align*}
        w_{\bar \sigma}(\hat R) &= \sum_{y \in \hat L} w_{\bar \sigma'}(y)\deg_{\hat H}(y) \\
        &\leq w_{\bar \sigma'}(Y)\Delta(\hat H) + \sum_{y \in \hat L \setminus Y} \frac{2\gamma_{\ell-1}}{\eps_{\ell-1}}\deg_{\hat H}(y) \\
        &\leq \gamma_{\ell-1}|A|^{\ell} + \frac{2\gamma_{\ell-1}}{\eps_{\ell-1}}e(\hat H).
    \end{align*}
    
    Rearranging and applying \Cref{cla:robust weight additive}, we have
    \begin{equation}\label{eq:edges in first graph additive}
        e(\hat H) \geq \frac{\eps_{\ell-1}w_{\bar \sigma}(\hat R)}{2\gamma_{\ell-1}} - \frac{\eps_{\ell-1}}{2}|A|^\ell \geq \left(\frac{\eps_{\ell-1}\gamma_{\ell}}{4\gamma_{\ell-1}} - \frac{\eps_{\ell-1}}{2}\right)|A|^\ell \geq 3b\eps_{\ell}|A|^\ell.
    \end{equation}
    Here we used that $\gamma_{\ell-1} \leq \gamma_{\ell}/4$ and $\eps_{\ell} \leq \eps_{\ell-1}/(6b)$.

    Next, we iteratively delete from $\hat H$ any vertex in $\hat L$ with degree less than $b\eps_{\ell}|A|$ and any vertex in $\hat R$ with degree less than $b$, as long as possible. Let $H$ be the resulting graph, and observe that the number of edges deleted is less than 
    \[b \eps_\ell|A||\hat L| + b|\hat R| \leq  2b\eps_{\ell}|A|^{\ell},\]
    so $e(H) \geq b \eps_{\ell}|A|^{\ell}$ by~\eqref{eq:edges in first graph additive}. In particular, $H$ is nonempty, every vertex in $L := \hat L \cap V(H)$ has degree at least~$b\eps_{\ell}|A|$, and every vertex in $R := \hat R \cap V(H)$ has degree at least $b$. Moreover, because every vertex in $L$ has positive $\bar \sigma'$-weight, it follows that
    \[w_{\bar \sigma}(R) \geq \sum_{y \in L}w_{\bar \sigma'}(y)\deg_H(y) \geq e(H) \geq b\eps_{\ell} |A|^{\ell}.\]
    Lastly, every element of $R \subseteq \hat R$ is $(\delta, \bar \sigma)$-robust by the definition of $\hat R$.
    Thus~\ref{prop:right robust additive}--\ref{prop:left degree additive} are satisfied with~${\beta := b\eps_k}$.
\end{proof}

We are now ready to prove \Cref{thm:truly additively generic small growth}. 
\begin{proof}[Proof of \Cref{thm:truly additively generic small growth}]
Choose $\delta := 3^{-3k}$ and $b := 5^{2k}$. Let $\eps = \eps(k,b,\delta)$ and $\beta = \beta(k,b,\delta)$ be the constants given by \Cref{lem:robust bipartite graph additive}. Let $\Gamma$ be an Abelian group with no $2$-torsion, and let $A$ be a subset of $\Gamma$ with $|A^{+k}| \leq \eps|A|^k$. By choosing a smaller constant $\eps$ if necessary, we may assume that $|A|$ is sufficiently large. By \Cref{lem:robust bipartite graph additive}, there exist an integer $2 \leq \ell \leq k$, a sequence $\bar \sigma = (\sigma_1, \ldots, \sigma_{\ell}) \in \{\pm 1\}^\ell$, and two subsets $L,R \subseteq \Gamma$ satisfying~\ref{prop:right robust additive}--\ref{prop:left degree additive}.

We attempt to construct many additively generic solutions to~\eqref{eq:equal k-sums} in $A$ as follows. Initialize $U := \emptyset$.
\begin{enumerate}
    \item Choose distinct elements $a_1, \ldots, a_\ell \in A$ so that $\{a_1, \ldots, a_{\ell}\}$ is strongly dissociated and so that $z_0 := \sigma_1 a_1 + \cdots + \sigma_{\ell}a_{\ell} \in R$. Update $U \leftarrow \{a_1, \ldots, a_\ell\}$.
    \item For $1 \leq i \leq k-\ell$, in turn, choose an element $a_{k+\ell+i} \in A$ so that $a_{k+\ell+i}, 2a_{k+\ell+i} \notin \spn^{\pm 2}(U)$ and $y_i := z_{i-1} - \sigma_{\ell}a_{k+\ell+i} \in L$, then choose an element $a_{\ell+i} \in A$ so that $a_{\ell+i}, 2a_{\ell+i} \notin \spn^{\pm 2}(U \cup \{a_{k+\ell+i}\})$ and $z_i := y_i + \sigma_{\ell}a_{\ell+i} \in R$. Update $U \leftarrow U \cup \{a_{\ell+i}, a_{k+\ell+i}\}$.
    \item Choose elements $a_{k+1}, \ldots, a_{k+\ell} \in A$ so that $\sigma_1a_{k+1} + \cdots + \sigma_{\ell}a_{k+\ell} = z_{k-\ell}$, and so that
    \[\sum_{i=1}^\ell \tau_i a_{k+i} \notin \spn^{\pm}(U)\]
    for every nonzero sequence $\bar \tau = (\tau_1,\ldots, \tau_\ell) \in \{-1,0,1\}^\ell$ which is not $\pm \bar \sigma$.
\end{enumerate}

First, we will verify that $(x_1, \ldots, x_{2k}) = (a_1, \ldots, a_{2k})$ produced in this way is an additively generic solution to the equation
\begin{equation}\label{eq:mixed additive}
    \sum_{i=1}^\ell \sigma_i x_i + \sum_{i=\ell+1}^k \sigma_\ell x_i = \sum_{i=1}^\ell \sigma_i x_{k+i} + \sum_{i=\ell+1}^k \sigma_\ell x_{k+i},
\end{equation}
and therefore some fixed permutation of $(a_1, \ldots, a_{2k})$ (obtained by transposing $a_i$ and $a_{k+i}$ whenever $\sigma_{\min(i,\ell)} = -1$) is an additively generic solution to~\eqref{eq:equal k-sums}.

To begin, note that by construction,
\begin{equation}\label{eq:common sum additive}
    \sum_{i=1}^\ell \sigma_i a_i + \sum_{i=1}^{k-\ell}(\sigma_\ell a_{\ell+i} - \sigma_\ell a_{k+\ell+i}) = z_{k-\ell} = \sum_{i=1}^\ell \sigma_i a_{k+i},
\end{equation}
so $(x_1, \ldots, x_{2k}) = (a_1, \ldots, a_{2k})$ satisfies~\eqref{eq:mixed additive}.

Next, to show that this solution is additively generic, we make some further definitions and observations.
Define $S_0 := [k] \cup [k+\ell+1,2k]$, so that $U_0 := \{a_i : i \in S_0\}$ is the value of $U$ at the beginning of Step~$3$. Note that because $\Gamma$ has no $2$-torsion, Steps~$1$ and~$2$ guarantee that $U_0$ is a strongly dissociated set consisting of $2k-\ell$ distinct elements. Also, for simplicity, define $\bar \sigma' = (\sigma_1', \ldots, \sigma_{2k}')$ by
    \[\sigma_i' = \left\{\begin{array}{r l}
     \sigma_i & \text{if $1 \leq i \leq \ell$}, \\
     \sigma_\ell & \text{if $\ell + 1 \leq i \leq k$}, \\
     -\sigma_i & \text{if $k+1 \leq i \leq k + \ell$}, \\
     -\sigma_\ell & \text{if $k + \ell + 1 \leq i \leq 2k$,}
     \end{array}\right.\]
     so that~\eqref{eq:mixed additive} can be written as $\sum_{i=1}^{2k} \sigma_i' a_i = 0$.
     Now suppose that we have 
     $\sum_{i=1}^{2k}\tau_i' a_i = 0$ for some $\bar \tau' = (\tau_1', \ldots, \tau_{2k}') \in \{-1, 0, 1\}^{2k}$. We wish to show that $\bar \tau' = 0$ or $\bar \tau' = \pm \bar \sigma'$. Let $\bar \tau = (\tau_1, \ldots, \tau_\ell) := (\tau_{k+1}', \ldots, \tau_{k+\ell}')$.
     Then
     \begin{equation}\label{eq:tau additive}\sum_{i=1}^\ell \tau_i a_{k+i} = \sum_{i=k+1}^{k+\ell}\tau_i' a_i = \sum_{i \in S_0} -\tau_i' a_i \in \spn^{\pm}(U_0),
     \end{equation}
     so by Step 3, $\bar \tau$ must be either $0$ or $\bar \sigma$ or $-\bar \sigma$. If $\bar \tau = 0$, then $\sum_{i \in S_0} \tau_i' a_i = 0$ by~\eqref{eq:tau additive},
     so because $U_0$ is strongly dissociated (hence dissociated), we must have $\tau_i' = 0$ for all $i \in S_0$, and thus $\bar \tau' = 0$. If $\bar \tau = -\bar \sigma$, then by~\eqref{eq:common sum additive} and~\eqref{eq:tau additive}, we have
     \[\sum_{i \in S_0} \tau_i' a_i = \sum_{i=1}^\ell \sigma_i a_{k+i} = \sum_{i=1}^\ell \sigma_i a_i + \sum_{i=1}^{k-\ell} (\sigma_\ell a_{\ell+i} - \sigma_\ell a_{k+\ell+i}) = \sum_{i \in S_0}\sigma_i' a_i.\]
     Now because $\tau_i' - \sigma_i' \in \{-2, -1, 0, 1, 2\}$ for all $i \in S_0$, and $U_0$ is strongly dissociated, we have that $\tau_i' = \sigma_i'$ for all $i \in S_0$, and thus $\bar \tau' = \bar \sigma'$. A similar argument shows that if $\bar \tau = \bar \sigma$, then $\bar \tau' = -\bar \sigma'$. This proves that $(x_1, \ldots, x_{2k}) = (a_1, \ldots, a_{2k})$ is an additively generic solution to~\eqref{eq:mixed additive} for any $a_1, \ldots, a_{2k}$ obtained by Steps~$1$--$3$.

     Now let us ensure that this process can be carried out and count the number of choices we have at each of these steps.

     \textbf{Step 1.} By~\ref{prop:weight count additive}, there are at least $w_{\bar \sigma}(R) \geq \beta |A|^{\ell}$ choices of $a_1, \ldots, a_{\ell} \in A$ with $\sigma_1 a_1 + \cdots + \sigma_\ell a_\ell \in R$. Of these, we claim that at most $5^{\ell}|A|^{\ell-1}$ have $a_1, \ldots, a_\ell$ not distinct or $\{a_1, \ldots, a_\ell\}$ not strongly dissociated. Indeed, if one of these two situations occurs, then there exists a nonzero sequence $\bar \tau = (\tau_1, \ldots, \tau_\ell) \in \{-2,-1,0,1,2\}^{\ell}$ such that $\sum_{i=1}^\ell \tau_i a_i = 0$, and so because $\Gamma$ has no $2$-torsion, the element $a_i$ with $i := \max(\mathrm{supp}(\bar \tau))$ is determined by the remaining $\ell-1$ elements $a_1, \ldots, a_{i-1}, a_{i+1}, \ldots, a_\ell$. Therefore, the number of choices of $a_1, \ldots, a_\ell \in A$ with $\sum_{i=1}^\ell \tau_i a_i = 0$ is at most $|A|^{\ell-1}$. Summing over all nonzero sequences $\bar \tau \in \{-2,-1,0,1,2\}^\ell$ gives at most $5^\ell|A|^{\ell-1}$ bad choices for $a_1, \ldots, a_\ell$, as claimed. Altogether, since we assumed $|A|$ was large, this gives at least
     \begin{equation}\label{eq:step 1 count additive}
         \beta |A|^{\ell} - 5^{\ell}|A|^{\ell-1} \geq \frac{\beta}{2}|A|^{\ell}
     \end{equation}
     valid choices for $a_1, \ldots, a_\ell$ in Step~1.

     \textbf{Step 2.} Fix $1 \leq i \leq k-\ell$. Suppose we have already chosen $a_{\ell+1}, \ldots, a_{\ell+i-1}, a_{k+\ell+1}, \ldots, a_{k+\ell+i-1}$ (and in particular have defined $z_{i-1} \in R$), and we wish to choose $a_{k+\ell+i}$ and $a_{\ell+i}$. Since $z_{i-1} \in R$, we have by~\ref{prop:right degree additive} that there are at least $b = 5^{2k}$ choices of $y_i \in L$ such that $\sigma_\ell a_{k+\ell+i} := z_{i-1} - y_i \in \sigma_\ell A$. Of these, at most $2 \cdot 5^{|U|} < b$ have $a_{k+\ell+i}$ or $2a_{k+\ell+i}$ in $\spn^{\pm 2}(U)$, leaving us at least one good choice for~$a_{k+\ell+i}$. Similarly, by~\ref{prop:left degree additive}, there are at least $\beta |A| - 5^{2k}$ good choices for $a_{\ell+i}$. Altogether, this gives at least
    \begin{equation}\label{eq:step 2 count additive}
        \left(\beta |A| - 5^{2k}\right)^{k-\ell} \geq \left(\frac{\beta}{2}\right)^{k-\ell}|A|^{k-\ell}
    \end{equation}
    valid choices for $a_{\ell+1}, \ldots, a_k, a_{k+\ell+1}, \ldots,  a_{2k}$ in Step 2.

    \textbf{Step 3.} Since $z_{k-\ell} \in R$, we have by~\ref{prop:right robust additive} that $z_{k-\ell}$ is $(\delta, \bar \sigma)$-robust. Therefore, by \Cref{def:robust additive}, less than $3^{\ell}3^{|U|}\delta w_{\bar \sigma}(z_{k-\ell}) \leq w_{\bar \sigma}(z_{k-\ell})$ representations of $z_{k-\ell}$ as $z_{k-\ell} = \sigma_1a_{k+1} + \cdots + \sigma_\ell a_{k+\ell}$ have $\sum_{i =1}^{\ell} \tau_i a_{k+i} \in \spn^{\pm}(U)$ for some nonzero sequence $\bar \tau = (\tau_1, \ldots, \tau_\ell) \in \{-1,0,1\}^{\ell}$ which is not $\pm \bar \sigma$. This leaves a positive number of valid choices for $a_{k+1}, \ldots, a_{k+\ell}$ in Step 3.

    Now putting it altogether, we have by~\eqref{eq:step 1 count additive} and~\eqref{eq:step 2 count additive} that there are at least 
    \[\frac{\beta}{2} |A|^{\ell} \cdot \left(\frac{\beta}{2}\right)^{k-\ell}|A|^{k-\ell} = \left(\frac{\beta}{2}\right)^{k-\ell+1}|A|^k\]
    ways to choose an additively generic solution to~\eqref{eq:mixed additive}, hence to~\eqref{eq:equal k-sums}, in $A$ using Steps~$1$--$3$.
\end{proof}

\section{Linearly generic solutions to symmetric equations}\label{sec:linearly generic}
Now we move back into the vector space setting to discuss \Cref{thm:linearly generic}, which finds $\mathbb F_q$-linearly generic solutions to some symmetric linear equation~\eqref{eq:symmetric equation} in $2k$ variables in any linear configuration $A \subseteq \mathbb F_q^n$ of size at least $CN^{1/k}$, where $N := q^n$. Our proof of this theorem is nearly identical to the proof we just gave for \Cref{thm:truly additively generic small growth}, which finds additively generic solutions to the equation~\eqref{eq:equal k-sums}. In fact, the only changes that need to be made to that proof are that we must make use of all coefficients in $\mathbb F_q$ instead of merely $\{-1, 0, 1\}$ (and sometimes $\{-2,-1,0,1,2\}$), and we must adjust the parameters accordingly. We omit the details of the proof for the sake of brevity, but we include the key definitions that we use to give the reader a sense of how those details might look.
\begin{definition}
Given a linear configuration $A \subseteq \mathbb F_q^n$, a positive integer $\ell$, and a sequence $\bar{\lambda} = (\lambda_1, \ldots, \lambda_\ell) \in (\mathbb F_q^*)^\ell$ of nonzero coefficients in $\mathbb F_q$, the \emph{$\bar{\lambda}$-weight} of a vector $z \in \mathbb F_q^n$ with respect to $A$, denoted $w_{\bar{\lambda}}(z)$, is the number of ordered $\ell$-tuples $(a_1, \ldots, a_\ell) \in A^\ell$ with $\sum_{i=1}^\ell \lambda_i a_i = z$.
\end{definition}

\begin{definition}\label{def:robust Fq}
    For $\delta > 0$ and $\bar \lambda = (\lambda_1, \ldots, \lambda_\ell) \in (\mathbb F_q^*)^\ell$, we say that $z \in \mathbb F_q^n$ is \emph{$(\delta,\bar \lambda)$-robust} with respect to a linear configuration $A \subseteq \mathbb F_q^n$ if, for every $y \in \mathbb F_q^n$ and every nonzero $\bar \mu = (\mu_1, \ldots, \mu_\ell) \in \mathbb F_q^\ell$ which is not an $\mathbb F_q$-scalar multiple of $\bar \lambda$, we have that the number of solutions $(x_1, \ldots, x_\ell) \in A^\ell$ to the system
    \[\left\{\begin{array}{l l}
    \sum_{i=1}^\ell \lambda_i x_i = z \\
    \sum_{i=1}^\ell \mu_i x_i = y
    \end{array}\right.\]
    is less than $\delta w_{\bar \lambda}(z)$.
\end{definition}
As in the previous proofs, the strategy is to find a small set of vectors of large $\bar \lambda$-weight which are $(\delta,\bar \lambda)$-robust for some small $\delta > 0$ and some sequence $\bar \lambda = (\lambda_1, \ldots, \lambda_\ell)$ of coefficients, then use this to greedily find a solution in a structure resembling a $(k,\ell)$-tulip. We end the discussion here.

\section{Concluding remarks}\label{sec:concluding remarks}

Already, we have discussed many open problems throughout this article, but we would like to mention a few more in closing.

\subsection{Linearly generic solutions to a single equation}

One possible way to strengthen \Cref{thm:linearly generic} in the vector space setting is to guarantee a linearly generic solution to one particular symmetric linear equation~\eqref{eq:symmetric equation}, whose coefficients have been specified in advance. As discussed in the introduction, for certain choices of coefficients, there exist linear configurations much larger than $\Theta(N^{1/k})$ avoiding linearly generic solutions. However, we believe the symmetric additive equation~\eqref{eq:equal k-sums} to be a plausible candidate for a strong bound matching the one given in \Cref{thm:linearly generic}.

\begin{question}
    For every integer $k \geq 2$ and prime power $q$, does there exist a constant $C$ such that for every linear configuration $A \subseteq \mathbb F_q^n$ of size at least $CN^{1/k}$, there exists an $\mathbb F_q$-linearly generic solution $(x_1, \ldots, x_{2k}) \in A^{2k}$ to the equation
    \[x_1 + \cdots + x_k = x_{k+1} + \cdots + x_{2k},\tag{\ref{eq:equal k-sums}}\]
    where $N := q^n$?
\end{question}

We mentioned in the introduction that this is true for every $q$ when $k=2$, which follows from the Sidorenko property for~\eqref{eq:equal k-sums}. We have also seen that it holds for $q \in \{2,3\}$ and every $k \geq 2$ by \Cref{cor:linearly generic 2 and 3}, and it is also tight for $q=2$ by \Cref{thm:bch}. Furthermore, by \Cref{thm:Bk construction vector space}, this would be tight if true whenever the characteristic of $\mathbb F_q$ is greater than $k$. 

In this paper, we have only considered symmetric linear equations, and in particular, we have not discussed linear equations in an odd number of variables (with nonzero coefficients). We expect this type of equation to behave somewhat differently from equations in an even number of variables, though it is not clear to what extent over $\mathbb F_q$ for $q \geq 3$. Over $\mathbb F_2$, any equation in an odd number of variables is not affine-type, and it follows from an $\mathbb F_q$-analogue of the Erd\H{o}s-Stone Theorem due to Geelen and Nelson~\cite{GN15} that the maximum size of a linear configuration $A \subseteq \mathbb F_2^n$ excluding $\mathbb F_2$-linearly generic solutions to any particular equation in an odd number of variables is $(\frac 12 + o(1)) \cdot 2^n$. Over $\mathbb F_q$ for $q \geq 3$, affine-type equations in $2k+1$ variables exist for every $k \geq 1$, and Sauermann~\cite{S23} has shown that such equations are always temperate, meaning that there exists $\delta = \delta_q(k) > 0$ such that $\ex_q(n,F) = O(N^{1-\delta})$, where $F$ is the linear configuration of size $2k+1$ corresponding to the equation, and $N := q^n$. This comes from applying the slice rank method in an iterative fashion. Unfortunately, due to the iterative nature of the argument, the function $\delta_q(k)$ shrinks to~$0$ at a superexponential rate in $k$ for every fixed $q$. We suspect that there is much room for improvement on this bound.

\begin{question}
    For every prime power $q \geq 3$, does there exist a function $\delta = \delta_q(k)$ tending to $1$ as $k \to \infty$ such that every linear configuration $A \subseteq \mathbb F_q^n$ with no $\mathbb F_q$-linearly generic solution to any affine-type $\mathbb F_q$-linear equation
    \[\lambda_1x_1 + \cdots + \lambda_{2k+1}x_{2k+1}=0\]
    with $2k+1$ variables and nonzero coefficients has size $O(N^{1-\delta})$, where $N := q^n$?
\end{question}

\subsection{Lower bounds}
One way to witness the tightness of \Cref{thm:linearly generic} would be to produce large \emph{$2k$-general sets}, which are closely related to linear error-correcting codes over $\mathbb F_q$ of large dimension and minimum distance greater than $2k$. For $k \geq 1$, a \emph{$k$-general set} in $\mathbb F_q^n$ (or equivalently in the projective space $\mathrm{PG}(n-1,q)$) is a set of vectors with the property that any $k$ of them are linearly independent. It is not hard to show that the maximum size of a $2k$-general set in $\mathbb F_q^n$ is at most $O(N^{1/k})$ for every prime power $q$, where $N := q^n$, and this also follows from \Cref{thm:linearly generic}. \Cref{thm:bch} gives $2k$-general sets of asymptotically optimal size $\Omega(N^{1/k})$ in $\mathbb F_2^n$ for every $k \geq 2$. Also, $4$-general sets of size $\Omega(N^{1/2})$ are known to exist in $\mathbb F_q^n$ for $q \in \{3,4\}$ by~\cite{huang2019sidon} and~\cite{pavese2025on}, respectively.
\begin{question}
    For every integer $k \geq 2$ and prime power $q$, do there exist $2k$-general sets in $\mathbb F_q^n$ of size~$\Omega(N^{1/k})$, where $N:= q^n$?
\end{question}

We also know very little about the tightness of \Cref{thm:kst vector space} for most values of the parameters, even in the case $r=2$.

\begin{question}
    For which values of $s$ and $t$ is the K\H{o}v\'ari--S\'os--Tur\'an-type bound $\ex_q(n,\Vect_q[K_{s,t}]) = O(N^{1-1/s})$ tight?
\end{question}
We know the bound is tight for $s=2$ over every finite field. This follows from the existence of large Sidon sets in $\mathbb F_q^n$ when $q$ is odd~\cite{cilleruelo2012combinatorial} and the $k=2$ case of \Cref{thm:bch} when $q$ is even. The simplest open case is when $q=2$, $s=3$, and $t$ is sufficiently large, where we want to know if there exist $\mathbb F_2$-linear configurations $A \subseteq \mathbb F_2^n$ of size $\Omega(2^{2n/3})$ with no linear copy of $\Vect_2[K_{3,t}]$.

\subsection{Matroids excluding an even circuit and a uniform minor}

One of the characterizing features of $\mathbb F_q$-representable matroids is that they exclude lines $U_{2,q+2}$ with $q+2$ elements, not only as restrictions, but also as minors. However, for $q \geq 3$, the class of $U_{2,q+2}$-minor-free matroids is much larger than the class of $\mathbb F_q$-representable matroids, and in general, the former have no underlying algebraic structure. It would be nice to generalize our matroid analogue of the Bondy--Simonovits Theorem (\Cref{cor:matroid bondy-simonovits}) to this larger class of matroids.

\begin{question}
    For every fixed integer $q \geq 2$, is it true that every rank-$n$ simple $U_{2,q+2}$-minor-free matroid with no circuits of size $2k$ has size $O(q^{n/k})$?
\end{question}
 We suspect that this should at least be true when $q$ is a prime power. The methods of this paper rely heavily on linear structure in the matroid, so a completely new approach may be necessary. In~\cite{davies2025girth} (Conjectures 6 and 8), similar problems were posed for forbidden circuits of size at most $g$ when additionally forbidding the cycle matroid of a complete graph and its dual as a minor.

 \subsection{Rainbow \texorpdfstring{Tur\'an}{Turán} problem for even cycles}
 The \emph{rainbow Tur\'an problem} asks for the maximum number of edges in a properly edge-colored host graph excluding a \emph{rainbow} copy of a given graph $H$, meaning that no two edges of $H$ have the same color in the host graph. Formally the \emph{rainbow extremal number} (or \emph{rainbow Tur\'an number}) of a graph $H$, denoted $\ex^*(n,H)$, is the maximum number of edges in a properly edge-colored $n$-vertex graph excluding $H$ as a rainbow subgraph. Keevash, Mubayi, Sudakov, and Verstra\"ete~\cite{keevash2007rainbow} were the first to pose this problem in this level of generality, and they expressed special interest in the case of even cycles, perhaps  motivated by the following relation to $B_k^*$-sets. They observed that \Cref{thm:bose chowla} gives rise to a natural construction of a rainbow $C_{2k}$-free $n$-vertex graph with $(1+o(1))(n/2)^{1+1/k}$ edges. This is to take a $B_k$-set $A$ in $\Gamma := \mathbb Z_{q^{k}-1}$ of size $q$, where $q$ is a prime power satisfying $q^{k}-1 = (1-o(1))n/2$, and consider the \emph{edge-colored Cayley bipartite graph} generated by $A$, whose vertex set consists of two disjoint copies $\Gamma_1$ and $\Gamma_2$ of $\Gamma$, with an edge of color $a$ from $g \in \Gamma_1$ to $g + a \in \Gamma_2$ for every $a \in A$ and $g \in \Gamma_1$. It is not hard to see that such a graph is rainbow $C_{2k}$-free because $A$ is a $B_k^*$-set, so this establishes $\ex^*(n,C_{2k}) = \Omega(n^{1+1/k})$. It is also worth noting that when $n$ is a power of $2^k$, then a similar, but non-bipartite, Cayley graph construction in $\mathbb F_2^m$ using \Cref{thm:bch} yields a slightly larger $n$-vertex, rainbow $C_{2k}$-free graph, with $(\frac 12-o(1))n^{1+1/k}$ edges. In~\cite{keevash2007rainbow}, the authors conjectured that a matching upper bound should hold, namely $\ex^*(n,C_{2k}) = O(n^{1+1/k})$, and they proved this upper bound for $k=3$.  The full conjecture was later confirmed by Janzer~\cite{janzer2023rainbow}, who proved that $\ex^*(n,C_{2k}) \leq 2^{ck^2}n^{1+1/k}$ for some constant $c$. By considering edge-colored Cayley bipartite graphs, these gave the first combinatorial proofs of the upper bound of $O(N^{1/k})$ for the maximum size of a $B_k^*$-set in an arbitrary finite Abelian group of order $N$. The bound on the leading constant was subsequently improved by Kim, Lee, Liu, and Tran~\cite{kim2024rainbow}, who showed that $\ex^*(n,C_{2k}) \leq 10^5 k^3 n^{1+1/k}$. Both of these results were also accompanied by the corresponding supersaturation results for rainbow even cycles.  

We have seen that our methods have recovered both this $O(N^{1/k})$ bound for $B_k^*$-sets (Theorems~\ref{thm:additively generic integers} and~\ref{thm:inseparable and additively generic finite group}) and the $O(n^{1+1/k})$ bound on $\ex(n,C_{2k})$ from the Bondy--Simonovits Theorem. Though it seems reasonable that our proofs could be modified to the rainbow Tur\'an setting as well to reprove Janzer's result $\ex^*(n,C_{2k}) = O(n^{1+1/k})$, we have not found a way to avoid color collisions in our cycles except in the Cayley graph setting. 

\vspace{10pt}
\noindent
\textbf{Statement of AI use.} All the statements and the proofs were developed by the authors in the traditional way, without any use of AI tools. The final
version of the paper was written entirely by the authors. The authors used ChatGPT (GPT-6 Astra)
for assistance with proofreading, identifying
relevant literature, and  also to stylize our original figures.

\vspace{10pt}
\noindent
\textbf{Acknowledgements.} We would like to thank Dingding Dong for pointing us to the paper~\cite{cho2024norming}, which connects Sidorenko notions for (hyper)graphs and vector spaces.

\bibliographystyle{abbrv}
\bibliography{literature}

\end{document}